\documentclass[11pt,a4paper]{article}
\usepackage[utf8]{inputenc}
\usepackage[english]{babel}
\usepackage{pst-grad} 
\usepackage{pst-plot} 
\usepackage{pstricks}
\usepackage{amsmath,amssymb,mathrsfs,amsthm}
\usepackage[dvips]{graphicx}
\usepackage{anysize} 
\marginsize{3cm}{2cm}{2cm}{2cm} 
\usepackage{epsfig}

\newcommand{\RR}{\mathbb R}

\newcommand{\pat}{\partial_t}
\newcommand{\pax}{\partial_x}

\newcommand{\jeps}{\mathcal{J}_\epsilon*}

\newcommand{\feps}{f^\epsilon}
\usepackage[pdfauthor={Rafael Granero Belinch\'on},pdftitle={},bookmarks,colorlinks]{hyperref}
\hypersetup{colorlinks=true}
\newcounter{comentcount}
\setcounter{comentcount}{0}
\newcounter{teocount}
\setcounter{teocount}{0}
\newcounter{propcount}
\setcounter{propcount}{0}

\newtheorem{lem}{Lemma}
\newtheorem{prop}[propcount]{Proposition}

\newtheorem{teo}[teocount]{Theorem}  
\newtheorem{defi}{Definition}

\newenvironment{coment}
{\stepcounter{comentcount} {\bf \tt Remark} {\bf\tt\arabic{comentcount}} }{ }
\title{Global existence for the confined Muskat problem}
\author{Rafael Granero-Belinch\'on$^{\mbox{{\footnotesize 1}}}$}

\begin{document}

\maketitle 

\footnotetext[1]{Email: \texttt{rgranero@math.ucdavis.edu} Department of Mathematics, University of California at Davis, One Shields Avenue, Davis, CA, 95616.}

\vspace{0.3cm}
\begin{abstract}
In this paper we show global existence of Lipschitz continuous solution for the stable Muskat problem with finite depth (confined) and initial data satisfying some smallness conditions relating the amplitude, the slope and the depth. The cornerstone of the argument is that, for these \emph{small} initial data, both the amplitude and the slope remain uniformly bounded for all positive times. We notice that, for some of these solutions, the slope can grow but it remains bounded. This is very different from the infinite deep case, where the slope of the solutions satisfy a maximum principle. Our work generalizes a previous result where the depth is infinite.
\end{abstract}
\vspace{0.3cm}

\textbf{Keywords}: Darcy's law, inhomogeneus Muskat problem, well-posedness.

\textbf{Acknowledgments}: The author is supported by the grant MTM2011-26696 from Ministerio de Econom\'ia y Competitividad (MINECO). The author thanks David Paredes and Professors Diego C\'ordoba and Rafael Orive for comments that greatly improved the manuscript. The author is grateful to reviewers for their helpful suggestions.

\section{Introduction}
In this paper we study the dynamics of two different incompressible fluids with the same viscosity  in a bounded porous medium. This is known as the confined Muskat problem. For this problem we show that there are global in time Lipschitz continuous solutions corresponding to initial data that fulfills some conditions related to the amplitude, slope and depth. This problem is of practical importance because it is used as a model for a geothermal reservoir (see \cite{CF} and references therein) or a model of an aquifer or an oil well (see  \cite{Muskat}). The velocity of a fluid flowing in a porous medium satisfies Darcy's law (see \cite{bear,Muskat,bn}) 
\begin{equation}
\frac{\mu}{\kappa}v(\vec{x})=-\nabla p(\vec{x})-g\rho(\vec{x}) (0,1),
\label{IVeq1} 
\end{equation}
where $\mu$ is the dynamic viscosity, $\kappa$ is the permeability of the medium, $g$ is the acceleration due to gravity, $\rho(\vec{x})$ is the density of the fluid, $p(\vec{x})$ is the pressure of the fluid and $v(\vec{x})$ is the incompressible velocity field. To simplify the notation we assume $g=\mu/\kappa=1.$ The motion of a fluid in a two-dimensional porous medium is analogous to the Hele-Shaw cell problem (see \cite{cheng2012global, Peter, ES, H-S} and the references therein).

Let us consider the spatial domain $S=\RR\times(-l,l)$ for $0<l$. We assume impermeable boundary conditions for the velocity in the walls. In this domain we have two immiscible and incompressible fluids with the same viscosity and different densities; $\rho^1$ fills the upper subdomain and $\rho^2$ fills the lower subdomain (see Figure \ref{IVscheme}). The graph $f(x,t)$ is the interface between the fluids. 

It is well-known that the system is in the (Rayleigh-Taylor) stable regime if the denser fluid is below the lighter one in every point $\vec{x}$, \emph{i.e.} $\rho^2>\rho^1$. Conversely, the system is in the unstable regime if there is at least a point $\vec{x}$ where the denser fluid is above the lighter one.

\begin{figure}[t]
		\begin{center}
		\includegraphics[scale=0.3]{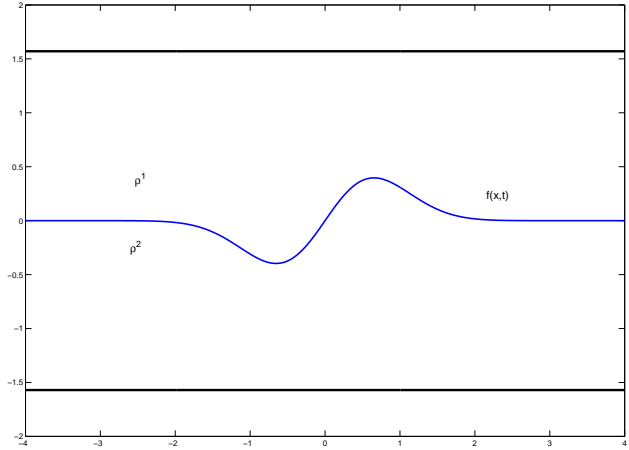} 
		\end{center}
		\caption{Physical situation}
\label{IVscheme}
\end{figure}

If the fluids fill the whole plane the contour equation satisfies (see \cite{c-g07})
\begin{equation}\label{IVfull}
\pat f=\frac{\rho^2-\rho^1}{2\pi}\text{P.V.}\int_\RR \frac{(\pax f(x)-\pax f(x-\eta))\eta}{\eta^2+(f(x)-f(x-\eta))^2}d\eta.
\end{equation}
For this equation the authors show the existence of classical solution locally in time (see \cite{c-g07} and also \cite{ambrose2004well, escher2011generalized, e-m10, KK}) in the Rayleigh-Taylor stable regime, and maximum principles for $\|f(t)\|_{L^\infty(\RR)}$ and $\|\pax f(t)\|_{L^\infty(\RR)}$ (see \cite{c-g09}). Moreover, in \cite{castro2012breakdown, ccfgl} the authors show the existence of turning waves and finite time singularities. In \cite{ccgs-10} the authors show an energy balance for the $L^2$ norm and some results concerning the global existence of solutions corresponding to \emph{'small'} initial data. Furthermore, they show that if initially $\|\pax f_0\|_{L^\infty(\RR)}<1$, then there is global Lipschitz solution and if the initial data has small $H^3$ norm then there is global classical solution.

The case where the fluid domain is the strip $S=\RR\times(-l,l)$, with $0<l$, has been studied in \cite{BCG, CGO, escher2011generalized, e-m10, GG}. In this domain the equation for the interface is
\begin{multline}\label{IVeqderiv}
\pat f(x,t)=\frac{\rho^2-\rho^1}{2\pi}\partial_x\text{P.V.}\int_\RR\arctan\left(\frac{\tan\left(\frac{\pi}{2l}\frac{f(x)-f(x-\eta)}{2}\right)}{\tanh\left(\frac{\pi}{2l}\frac{\eta}{2}\right)}\right)d\eta\\
+\frac{\rho^2-\rho^1}{2\pi}\partial_x\text{P.V.}\int_\RR\arctan\left(\tan\left(\frac{\pi}{2l}\frac{f(x)+f(x-\eta)}{2}\right)\tanh\left(\frac{\pi}{2l}\frac{\eta}{2}\right)\right)d\eta.
\end{multline}

For equation \eqref{IVeqderiv} the authors in \cite{CGO} obtain local existence of classical solution when the system starts its evolution in the stable regime and the initial interface does not reach the walls, and the existence of initial data such that $\|\pax f\|_{L^\infty(\RR)}$ blows up in finite time. The authors also study the effect of the boundaries on the evolution of the interface, obtaining the maximum principle and a decay estimate for $\|f(t)\|_{L^\infty(\RR)}$ and the maximum principle for $\|\pax f(t)\|_{L^\infty(\RR)}$ for initial data satisfying the following hypotheses:
\begin{equation}\label{IVH3}
\|\pax f_0\|_{L^\infty(\RR)}<1,
\end{equation}
\begin{equation}\label{IVH4}
\tan\left(\frac{\pi\|f_0\|_{L^\infty(\RR)}}{2l}\right)< \|\pax f_0\|_{L^\infty(\RR)}\tanh\left(\frac{\pi}{4l}\right),
\end{equation}
and
\begin{multline}\label{IVH5}
\left(\|\pax f_0\|_{L^\infty(\RR)}+|2(\cos\left(\frac{\pi}{2l}\right)-2)\sec^4\left(\frac{\pi}{4l}\right)|\|\pax f_0\|_{L^\infty(\RR)}^3\right)\frac{\pi^3}{8l^3}\\
\times\frac{\left(1+\|\pax f_0\|_{L^\infty(\RR)}\left(\|\pax f_0\|_{L^\infty(\RR)}+\frac{\tan\left(\frac{\pi }{2l}\frac{\|\pax f_0\|_{L^\infty(\RR)}}{2}\right)}{\tanh\left(\frac{\pi }{4l}\right)}\right)\right)}{6\tanh\left(\frac{\pi }{4l}\right)}\frac{\pi^2}{4l^2}\\
+4\tan\left(\frac{\pi }{2l}\|f_0\|_{L^\infty}\right)-4\|\pax f_0\|_{L^\infty(\RR)}\cos\left(\frac{\pi }{l}\|f_0\|_{L^\infty(\RR)}\right)<0
\end{multline}
These hypotheses are smallness conditions relating $\|\pax f_0\|_{L^\infty(\RR)}$, $\|f_0\|_{L^\infty(\RR)}$ and the depth. We define $(x(l),y(l))$ as the solution of the system
\begin{equation}\label{IVsisder}
\left\{ \begin{array}{ll}
         \tan\left(\frac{\pi x}{2l}\right)- y\tanh\left(\frac{\pi}{4l}\right)=0\\
        \left(y+|2(\cos\left(\frac{\pi}{2l}\right)-2)\sec^4\left(\frac{\pi}{4l}\right)|y^3\right)\frac{\left(1+y\left(y+\frac{\tan\left(\frac{\pi }{2l}\frac{y}{2}\right)}{\tanh\left(\frac{\pi }{4l}\right)}\right)\right)}{6\tanh\left(\frac{\pi }{4l}\right)}\left(\frac{\pi}{2l}\right)^5\\
        \qquad\qquad\qquad\qquad\qquad\qquad\qquad\qquad+4\tan\left(\frac{\pi }{2l}x\right)-4y\cos\left(\frac{\pi }{l}x\right)=0.
        \end{array}\right.
\end{equation}
Then, for initial data satisfying 
\begin{equation}\label{IVsisder2}
\|\pax f_0\|_{L^\infty(\RR)}<y(l) \text{ and } \|f_0\|_{L^\infty(\RR)}<x(l),
\end{equation}
the authors in \cite{CGO} show that
$$
\|\pax f\|_{L^\infty(\RR)}\leq 1.
$$
These inequalities define a region where the slope of the solution can grow but it is bounded uniformly in time. This region only appears in the finite depth case. 

In this paper the question of global existence of weak solution (in the sense of Definition \ref{IVdefi}) for \eqref{IVeqderiv} in the stable regime is adressed. In particular we show the following theorem:
\begin{teo}\label{IVglobal}
Let $f_0(x)\in W^{1,\infty}(\RR)$ be the initial datum satisfying hypotheses \eqref{IVH3}, \eqref{IVH4} and \eqref{IVH5} or \eqref{IVsisder2} in the Rayleigh-Taylor stable regime. Then there exists a global solution 
$$
f(x,t)\in C([0,\infty)\times \RR)\cap L^\infty([0,\infty),W^{1,\infty}(\RR)).
$$
Moreover, if the initial data satisfy \eqref{IVH3}, \eqref{IVH4} and \eqref{IVH5} the solution fulfills the following bounds:
$$
\|f(t)\|_{L^\infty(\RR)}\leq\|f_0\|_{L^\infty(\RR)}\text{ and }\|\pax f(t)\|_{L^\infty(\RR)}\leq\|\pax f_0\|_{L^\infty(\RR)},
$$
while, if the initial datums satisfy \eqref{IVsisder2}, the solution satisfies the following bounds:
$$
\|f(t)\|_{L^\infty(\RR)}\leq\|f_0\|_{L^\infty(\RR)}\text{ and }\|\pax f(t)\|_{L^\infty(\RR)}\leq1.
$$
\end{teo}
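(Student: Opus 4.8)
The plan is the classical regularize / a-priori-estimate / compactness / pass-to-the-limit scheme. First I would approximate the datum: take $f_0^\epsilon\in C^\infty(\RR)$ with enough spatial decay for the local theory of \cite{CGO} to apply (mollifying $f_0$ and truncating does the job), arranged so that $f_0^\epsilon\to f_0$ locally uniformly, $\|f_0^\epsilon\|_{L^\infty(\RR)}\le\|f_0\|_{L^\infty(\RR)}$ and $\|\pax f_0^\epsilon\|_{L^\infty(\RR)}\to\|\pax f_0\|_{L^\infty(\RR)}$. Since hypotheses \eqref{IVH3}–\eqref{IVH5} (resp. \eqref{IVsisder2}) are \emph{strict}, $f_0^\epsilon$ satisfies them as well for $\epsilon$ small. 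By the local well-posedness in \cite{CGO} there is a classical solution $\feps$ of \eqref{IVeqderiv} (if needed, after adding a vanishing regularization of the operator kept uniform in $\epsilon$) on a maximal interval $[0,T_\epsilon)$.

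The heart of the proof is to show that the maximum principles established in \cite{CGO} hold for $\feps$ with constants independent of $\epsilon$: namely $t\mapsto\|\feps(t)\|_{L^\infty(\RR)}$ is non-increasing; under \eqref{IVH3}–\eqref{IVH5}, $t\mapsto\|\pax\feps(t)\|_{L^\infty(\RR)}$ is non-increasing; and under \eqref{IVsisder2}, $\|\pax\feps(t)\|_{L^\infty(\RR)}$ stays below $1$ (it may grow starting from $y(l)$, but the structure encoded in \eqref{IVsisder} prevents it from ever reaching $1$). The mechanism is the Rademacher/Kato argument: one differentiates the supremum at a point $x_t$ where $|\pax\feps(\cdot,t)|$ is maximal, inserts \eqref{IVeqderiv}, and exploits the structure of the kernel together with the amplitude–slope–depth relations to get the favorable sign of the one-sided derivative. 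This is exactly where the smallness hypotheses are genuinely used, and I expect it to be the main obstacle: one must reproduce the delicate sign of the top-order term in the presence of the regularization and dispose of the commutators it generates without spoiling that sign. Because these bounds forbid blow-up of the norm controlling the local theory, they force $T_\epsilon=\infty$.

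With the uniform bound $\sup_\epsilon\|\feps\|_{L^\infty([0,\infty);W^{1,\infty}(\RR))}<\infty$ in hand, I would extract a convergent subsequence. Equicontinuity in $x$ is immediate from the uniform Lipschitz bound. For equicontinuity in $t$ I would write $\feps(x,t)-\feps(x,s)=\int_s^t\pat\feps\,d\tau$ and estimate $\pat\feps$ by splitting the singular integral in \eqref{IVeqderiv} into a near-diagonal part, handled by the uniform $W^{1,\infty}$ bound and the cancellation built into the kernel, and a tail, handled by the boundedness of the integrand away from the diagonal; this produces a uniform modulus of continuity in $t$ on compact sets. Arzelà–Ascoli together with a diagonal argument then give $\feps\to f$ locally uniformly on $[0,\infty)\times\RR$, with $f\in C([0,\infty)\times\RR)\cap L^\infty([0,\infty);W^{1,\infty}(\RR))$.

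Finally I would pass to the limit in the weak formulation of \eqref{IVeqderiv} (Definition \ref{IVdefi}): under the uniform $W^{1,\infty}$ control the nonlinear operator is continuous along locally uniformly convergent sequences (dominated convergence after the same near/far splitting), so $f$ is a weak solution. The amplitude bound $\|f(t)\|_{L^\infty(\RR)}\le\|f_0\|_{L^\infty(\RR)}$ passes directly to the limit, while the slope bound follows from weak-$*$ lower semicontinuity, $\|\pax f(t)\|_{L^\infty(\RR)}\le\liminf_\epsilon\|\pax\feps(t)\|_{L^\infty(\RR)}$, which yields $\le\|\pax f_0\|_{L^\infty(\RR)}$ under \eqref{IVH3}–\eqref{IVH5} and $\le 1$ under \eqref{IVsisder2}.
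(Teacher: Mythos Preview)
Your overall strategy matches the paper's, but two steps are more delicate than you indicate, and the second one would fail as written.

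First, regularizing only the initial datum does not suffice: the local theory of \cite{CGO} lives in $H^3$, and a uniform $W^{1,\infty}$ bound is \emph{not} a known continuation criterion for $H^3$ solutions of the unregularized equation \eqref{IVeqderiv}. The paper therefore regularizes the equation itself, adding $\epsilon\pax^2\feps$ (so that global $H^3$ can be closed via Young's inequality against the artificial dissipation), a kernel mollification $|\tanh(\eta/2)|^\epsilon$ inside $\Xi_1,\Xi_2$, and a battery of compensating diffusive terms $-\epsilon\alpha_2\Lambda_l^{1-\epsilon}$, $-\epsilon\alpha_3\Lambda_l^{1-3\epsilon}$, $-(\Lambda_l-\Lambda_l^{1-\epsilon})$, $-\alpha_4(\Lambda_l-\Lambda_l^{1-3\epsilon})$, together with a damping $-\sqrt{\epsilon}\,\alpha_1\feps$. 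Once the equation is modified, the maximum principles of \cite{CGO} no longer apply, and reproving them for the regularized system --- so that each positive contribution generated by the regularization is absorbed by one of the added linear terms --- is the technical heart of the paper (all of Section~\ref{IVsec1}, in particular Lemmas~\ref{IVlemdf1}--\ref{IVlemdf9}). You correctly flag this as ``the main obstacle'', but it is not a detail that can be deferred: the precise shape of the regularization is dictated by these sign computations, and a generic ``vanishing regularization of the operator'' will not preserve them.

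Second, your time-equicontinuity argument breaks down. With only a uniform $W^{1,\infty}$ bound on $\feps$, the integrand in the expression for $\pat\feps$ behaves near $\eta=0$ like $(\pax\feps(x)-\pax\feps(x-\eta))/\eta$, which is not absolutely integrable for merely Lipschitz functions; the principal value exists but produces a Hilbert-transform-type quantity with no $\epsilon$-independent $L^\infty$ bound. The paper accordingly does \emph{not} bound $\pat\feps$ uniformly in $L^\infty$: it obtains an $L^\infty$ bound on $\pat\feps$ only with an $\epsilon$-dependent constant (using $\feps\in H^3$ at fixed $\epsilon$), and a uniform bound only in the negative-order norm $W^{-2,\infty}_*(B(0,N))$, obtained by moving the outer $\pax$ in \eqref{IVeqderiv} onto the test function. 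Compactness in $L^\infty_{loc}$ is then extracted via the Aubin--Lions-type Lemma~\ref{IVlemapaco} from \cite{ccgs-10}, not by a direct Arzel\`a--Ascoli argument in $t$.
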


This result excludes the formation of cusps (blow up of the first and second derivatives) and turning waves for these initial data, remaining open the existence (or non-existence) of corners (blow up of the curvature with finite first derivative) during the evolution. Notice that in the limit $l\rightarrow\infty$ we recover the result contained in \cite{ccgs-10}. In this paper and the works \cite{BCG,CGO,GG} the effect of the boundaries over the evolution of the internal wave in a flow in porous media has been addressed. When these results for the confined case are compared with the known results in the case where the depth is infinite (see \cite{ccfgl, ccgs-10, c-g07, c-g09}) three main differences appear:
\begin{enumerate}
\item the decay of the maximum amplitude is slower in the confined case. 
\item there are smooth curves with finite energy that turn over in the confined case but do not show this behaviour when the fluids fill the whole plane.
\item to avoid the turning effect in the confined case you need to have smallness conditions in $\|f_0\|_{L^\infty(\RR)}$ and $\|\pax f_0\|_{L^\infty(\RR)}$. However, in the unconfined case, only the condition in the slope is required. Moreover, in the confined case a new region without turning effect appears: a region without a maximum principle for the slope but with an uniform bound. In both cases (the region with the maximum principle and the region with the uniform bound), Theorem \ref{IVglobal} ensures the existence of a global Lipschitz continuous solution.
\end{enumerate}
Keeping these results in mind, there are some questions that remain open. For instance, the existence of a wave whose maximum slope grows but remains uniformly bounded, or the existence of a wave with small slope such that, due to the distance to the boundaries, its slope grows and the existence (or non-existence) of corner-like singularities when the initial data considered is small in $W^{1,\infty}(\RR)$.

The proof of Theorem \ref{IVglobal} is achieved using some lemmas and propositions. First, we define \emph{'ad hoc'} diffusive operators and the regularized system (see Section \ref{IVsec1}). For this regularized system, we show some \emph{a priori} bounds for the amplitude and the slope. With these \emph{'a priori'} bounds we show global existence of $H^3$ solution (see Section \ref{IVsecglobal}). Then, we obtain the weak solution to \eqref{IVeqderiv}, $f$, as the limit of the regularized solutions (see Sections \ref{IVsec4} and \ref{IVsec5}).

\begin{coment}
On the rest of the paper we take $\pi/2l=1$ and $\rho^2-\rho^1=4\pi$ and we drop in the notation the $t$ dependence. We write $c$ for a universal constant that can change from one line to another. We denote $B(y,r)=[y-r,y+r].$
\end{coment}

\section{The regularized system}\label{IVsec1}
In this Section we define the regularized system and obtain some useful \emph{'a priori'} bounds for the amplitude and the slope. To clarify the exposition we write $\feps(x,t)$ for the solution of the regularized system. 
\subsection{Motivation and methodology}
We remark that the term
\begin{equation*}
\Xi_1(x,\eta)=\partial_x\arctan\left(\frac{\tan\left(\frac{f(x)-f(x-\eta)}{2}\right)}{\tanh\left(\frac{\eta}{2}\right)}\right)d\eta,
\end{equation*}
in \eqref{IVeqderiv} is a singular integral operator, while
\begin{equation*}
\Xi_2(x,\eta)=\partial_x\arctan\left(\tan\left(\frac{f(x)+f(x-\eta)}{2}\right)\tanh\left(\frac{\eta}{2}\right)\right)d\eta
\end{equation*}
is not if the curve does not reach the boundaries. In order to remove the singularity while preserving the inner structure, we put a term $|\tanh\left(\frac{\eta}{2}\right)|^\epsilon$ for $0<\epsilon<1/10$ in both kernels. We define
\begin{equation}\label{IVxi1eps}
\Xi_1^ \epsilon(x,\eta)=\partial_x\arctan\left(\frac{\tan\left(\frac{\feps(x)-\feps(x-\eta)}{2}\right)|\tanh(\frac{\eta}{2})|^\epsilon}{\tanh\left(\frac{\eta}{2}\right)}\right)d\eta,
\end{equation}
and
\begin{equation}\label{IVxi2eps}
\Xi_2^\epsilon(x,\eta)=\partial_x\arctan\left(\frac{\tan\left(\frac{\feps(x)+\feps(x-\eta)}{2}\right)}{|\tanh(\frac{\eta}{2})|^\epsilon}\tanh\left(\frac{\eta}{2}\right)\right)d\eta,
\end{equation}
To pass to the limit we use compactness coming from an uniform bound in\newline $L^\infty([0,T],W^{1,\infty}(\RR))$. Thus, we need to obtain \emph{'a priori'} bounds for the amplitude and the slope. We define $\alpha_i,$ $i=1,2,3,4$ positive constants that will be fixed below depending only on the initial datum considered. Taking derivatives in $\Xi_i^\epsilon$, we obtain some terms with positive contribution. So, we attach some diffusive operators to the regularized system. Given a smooth function $\phi$, we define
\begin{equation}\label{IVlambdaeps}
\Lambda_l^{1-\epsilon}\phi(x)=\text{PV}\int_\RR\frac{(\phi(x)-\phi(x-\eta))|\tanh\left(\frac{\eta}{2}\right)|^\epsilon}{\sinh^2\left(\frac{\eta}{2}\right)}d\eta.
\end{equation}
We notice that, if the depth is not $l=\pi/2$, the previous operators should be rescaled and we write the subscript $l$ to keep this dependence in mind. These operators are finite depth versions of the classical $\Lambda^\alpha=(-\Delta)^{\alpha/2}$. Roughly speaking, there are three different types of \emph{extra} terms appearing in the derivatives of \eqref{IVxi1eps} and \eqref{IVxi2eps} that we need to control to obtain the \emph{'a priori'} bound for the slope: 
\begin{enumerate}
\item There are terms which have an integrable singularity and they appear multiplied by $\epsilon$. In order to handle these terms we add $-\epsilon \alpha_2\Lambda_l^{1-\epsilon} \feps(x)$ and $-\epsilon\alpha_3\Lambda_l^{1-3\epsilon}\feps(x)$. These two scales $1-\epsilon$, $1-3\epsilon$, appear naturally due to the nonlinearity present in \eqref{IVeqderiv}.
\item There are terms which are nonlinear versions of $\Lambda_l-\Lambda_l^{1-\epsilon}$ and $\Lambda_l-\Lambda_l^{1-3\epsilon}$. These terms go to zero due to the convergence of the operators but they are not multiplied by $\epsilon$. In order to handle these terms we add $-(\Lambda_l-\Lambda_l^{1-\epsilon})\feps(x)$ and $-\alpha_4(\Lambda_l-\Lambda_l^{1-3\epsilon})\feps(x)$.
\item To absorb the nonsingular terms we add $-\sqrt{\epsilon}\alpha_1 \feps(x)$. We notice that, as $\epsilon<1/10$, the square root converges to zero less than linearly. This factor will be used because the contribution of some terms is $O(\epsilon^{a})$ with $1/2<a<1$. 
\end{enumerate}
Once the \emph{'a priori'} bounds are achieved, we should prove global solvability in $H^3$ for the regularized system. To get this bound we add $\epsilon\pax^2 \feps(x)$. We also regularize the initial datum. We take $\mathcal{J}\in C_c^\infty(\RR)$, $\mathcal{J}\geq0$ and $\|\mathcal{J}\|_{L^1}=1$, a symmetric mollifier and define $\mathcal{J}_\epsilon (x)=\mathcal{J}(x/\epsilon)/\epsilon$. Given $f_0\in W^{1,\infty}(\RR)$ we define the initial datum for the regularized system as
\begin{equation}\label{IVinitial}
\feps(x,0)=\frac{\jeps f_0}{1+\epsilon^2x^2}.
\end{equation}
Putting all together, we define the regularized system
\begin{multline}\label{IVeqreg}
\pat f^\epsilon(x)=-\sqrt{\epsilon} \alpha_1 \feps(x)+\epsilon\pax^2\feps(x)-\epsilon \alpha_2\Lambda_l^{1-\epsilon}\feps(x)\\
-\epsilon\alpha_3\Lambda_l^{1-3\epsilon}\feps(x)-(\Lambda_l-\Lambda_l^{1-\epsilon})\feps(x)-
\alpha_4(\Lambda_l-\Lambda_l^{1-3\epsilon})\feps(x)\\
+ 2\text{P.V.}\int_\RR\Xi_1^\epsilon(x,\eta)d\eta+2\text{P.V.}\int_\RR\Xi_2^\epsilon(x,\eta)d\eta.
\end{multline}
where $\alpha_i$ are universal constants that will be fixed below depending only on the initial datum $f_0$. We remark that $\feps_0\in H^k(\RR)$ for all $k\geq 0$. Notice that, due to the continuity of $f_0$, 
$$
\feps_0=\jeps f_0-\epsilon^2x^2\feps_0\rightarrow f_0
$$ 
uniformly on any compact set in $\RR$. Since $\pax f_0\in L^\infty(\RR)$, we get $\pax f_0\in L^1_{loc}(\RR)$ and then, as as $\epsilon\rightarrow0$, we have $\jeps \pax f_0\rightarrow \pax f_0$ a.e.  Thus, we have $\|\jeps f_0\|_{L^{\infty}(\RR)}\rightarrow \|f_0\|_{L^{\infty}(\RR)}$ and $\|\jeps \pax f_0\|_{L^{\infty}(\RR)}\rightarrow \|\pax f_0\|_{L^{\infty}(\RR)}$. Furthermore, we have that if $f_0$ satisfies the hypotheses \eqref{IVH3}, \eqref{IVH4} and \eqref{IVH5}, $\feps_0$ also satisfy these hypotheses if $\epsilon$ is small enough. Moreover, if $f_0,\pax f_0$ satisfy \eqref{IVsisder2} the same remains valid for $\feps$ and $\pax \feps$ if $\epsilon$ is small enough. 

We use some properties of the operators $\Lambda_l^{1-\epsilon}$. For the reader's convenience, we collect them in the following lemma:
\begin{lem}\label{IVops}
For the operators $\Lambda_l^{1-\epsilon}$ (see \eqref{IVlambdaeps}), the following properties hold:
\begin{enumerate}
\item $\Lambda_l^{1-\epsilon}$ is $L^2$-symmetric.
\item $\Lambda_l^{1-\epsilon}$ is positive definite.
\item Let $\phi$ be a Schwartz function. Then, they converge acting on $\phi$ as $\epsilon$ goes to zero:
$$
\|(\Lambda_l-\Lambda_l^{1-\epsilon})\phi\|_{L^1(\RR)}\leq c\|\phi\|_{W^{2,1}(\RR)}\epsilon.
$$
\item Let $\phi$ be a Schwartz function. Then, the derivative can be written in two different forms as
\begin{multline*}
\Lambda_l^{1-\epsilon}\pax \phi(x)=(1-\epsilon)\text{P.V.}\int_\RR\frac{\left(\pax \phi(x)-\frac{\phi(x)-\phi(\eta)}{\sinh(x-\eta)}\right)|\tanh((x-\eta)/2)|^\epsilon}{\sinh^2\left(\frac{x-\eta}{2}\right)}d\eta\\
+\text{P.V.}\int_\RR\frac{\left(\pax \phi(x)-\frac{\phi(x)-\phi(\eta)}{\tanh(x-\eta)}\right)|\tanh((x-\eta)/2)|^\epsilon}{\sinh^2\left(\frac{x-\eta}{2}\right)}d\eta+4\pax \phi(x)\\
=\text{P.V.}\int_\RR\frac{\pax(\phi(x)-\phi(x-\eta))|\tanh(\eta/2)|^\epsilon}{\sinh^2(\frac{\eta}{2})}d\eta
\end{multline*}
\end{enumerate}
\end{lem}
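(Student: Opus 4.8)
The plan is to treat the four items in turn, each reduced to the explicit structure of the even kernel
$$k_\epsilon(\eta):=\frac{|\tanh(\eta/2)|^\epsilon}{\sinh^2(\eta/2)},\qquad \Lambda_l^{1-\epsilon}\phi(x)=\text{P.V.}\int_\RR\big(\phi(x)-\phi(x-\eta)\big)k_\epsilon(\eta)\,d\eta,$$
with $\Lambda_l$ the $\epsilon=0$ case. For the $L^2$-symmetry and positive definiteness I would first record the symmetric (second--difference) form of the associated bilinear form: since $k_\epsilon$ is even, the substitutions $\eta\mapsto-\eta$ and then $x\mapsto x-\eta$ inside the double integral give, for Schwartz $\phi,\psi$,
$$\int_\RR \big(\Lambda_l^{1-\epsilon}\phi\big)(x)\psi(x)\,dx=\frac12\int_\RR\int_\RR\big(\phi(x)-\phi(x-\eta)\big)\big(\psi(x)-\psi(x-\eta)\big)k_\epsilon(\eta)\,d\eta\,dx,$$
which is manifestly symmetric in $(\phi,\psi)$; taking $\psi=\phi$ and using $k_\epsilon\ge0$ (with $k_\epsilon>0$ off $\eta=0$) gives nonnegativity, with equality only for constants. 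This settles the first two items.

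For the convergence estimate I would use the same even-kernel symmetrization to rewrite $(\Lambda_l-\Lambda_l^{1-\epsilon})\phi(x)$ with second--difference numerator $\tfrac12\big(2\phi(x)-\phi(x+\eta)-\phi(x-\eta)\big)$ against the even weight $\big(1-|\tanh(\eta/2)|^\epsilon\big)/\sinh^2(\eta/2)$, which cancels the non-integrable part of the singularity at $\eta=0$. Then, writing $|\tanh(\eta/2)|^\epsilon=e^{-\epsilon|\log|\tanh(\eta/2)||}$ and using $1-e^{-u}\le u$, one has $0\le 1-|\tanh(\eta/2)|^\epsilon\le\epsilon\,|\log|\tanh(\eta/2)||$. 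After Fubini, I would bound $\int_\RR|2\phi(x)-\phi(x+\eta)-\phi(x-\eta)|\,dx\le\min\{4\|\phi\|_{L^1},\ \eta^2\|\pax^2\phi\|_{L^1}\}$; the remaining one-dimensional integral of $|\log|\tanh(\eta/2)||/\sinh^2(\eta/2)$ against this minimum converges (a mere logarithmic singularity at $\eta=0$ once the $\eta^2$ bound is used --- this is exactly why the second derivative, i.e. $W^{2,1}$, enters --- and exponential decay as $|\eta|\to\infty$), yielding the claimed bound $c\,\epsilon\,\|\phi\|_{W^{2,1}}$.

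For the two representations of the derivative, the second one is immediate: differentiating under the integral sign in the defining representation is legitimate since $\phi(x)-\phi(x-\eta)=O(\eta)$ near $\eta=0$ and $\phi$ is Schwartz, giving $\Lambda_l^{1-\epsilon}\pax\phi(x)=\text{P.V.}\int_\RR\pax\big(\phi(x)-\phi(x-\eta)\big)k_\epsilon(\eta)\,d\eta$. For the first representation I would change variables $\eta\mapsto x-\eta$ to get $\Lambda_l^{1-\epsilon}\phi(x)=\text{P.V.}\int_\RR\big(\phi(x)-\phi(\eta)\big)k_\epsilon(x-\eta)\,d\eta$, then differentiate the $\delta$-truncated integral $\int_{|x-\eta|>\delta}$ in $x$ and let $\delta\to0$. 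Using $\coth(z/2)=\coth z+\tfrac1{\sinh z}$ and $\tanh(z/2)=\coth z-\tfrac1{\sinh z}$ one computes
$$\pax k_\epsilon(x-\eta)=-k_\epsilon(x-\eta)\left(\frac{1}{\tanh(x-\eta)}+\frac{1-\epsilon}{\sinh(x-\eta)}\right),$$
so the bulk term equals $\int_{|x-\eta|>\delta}k_\epsilon(x-\eta)\big(\pax\phi(x)-\tfrac{\phi(x)-\phi(\eta)}{\tanh(x-\eta)}-(1-\epsilon)\tfrac{\phi(x)-\phi(\eta)}{\sinh(x-\eta)}\big)\,d\eta$, while differentiating the cutoff produces a boundary term at $|x-\eta|=\delta$. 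Splitting the bulk integrand into the two convergent combinations that appear in the statement plus the leftover $-(1-\epsilon)\pax\phi(x)\,k_\epsilon(x-\eta)$, and using the explicit value $\int_{|\eta|>\delta}k_\epsilon(\eta)\,d\eta=\frac{4}{\epsilon-1}\big(1-|\tanh(\delta/2)|^\epsilon\coth(\delta/2)\big)$, one checks that the divergent ($\delta^{\epsilon-1}$) part of that leftover cancels exactly against the boundary term, leaving in the limit the finite contribution $4\pax\phi(x)$ together with the two stated convergent integrals. Finally, the equality of the two representations follows since $\Lambda_l^{1-\epsilon}$ commutes with $\pax$.

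The routine parts are the first three items. The delicate point, and the one on which I would spend most care, is the $\delta\to0$ bookkeeping in the last item: matching the divergent boundary contribution at $|x-\eta|=\delta$ against the divergent part of the kernel-derivative integral and extracting the precise constant $4$ (equivalently, tracking that the finite part of $\int_{|\eta|>\delta}k_\epsilon$ contributes $-4/(1-\epsilon)$).
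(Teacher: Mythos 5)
Your proposal is correct; for items 1--3 it coincides with the paper's (very terse) argument, which likewise reads symmetry and positivity off the even kernel and proves the third item from exactly your two facts, namely $\left|1-|\tanh(y)|^\epsilon\right|\leq\epsilon\left|\log|\tanh(y)|\right|$ (Mean Value Theorem) and the integrability of $|\log|\tanh(y)||$, leaving the second-difference/Taylor step implicit --- your symmetrization plus the bound $\min\{4\|\phi\|_{L^1},\eta^2\|\pax^2\phi\|_{L^1}\}$ is precisely the natural way to fill that in. For item 4 you take a genuinely different, more explicit route. The paper defines $F_R(x)$ as the truncated integral with the odd counterterm $2\pax\phi(x)\,|\tanh((x-\eta)/2)|^{\epsilon}/\tanh((x-\eta)/2)$ subtracted; this does not change the symmetric truncated integral but removes the non-integrable $|x-\eta|^{\epsilon-2}$ part of the differentiated integrand, so one can differentiate and pass to the limit by uniform convergence with no boundary bookkeeping. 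You instead differentiate the raw truncation by Leibniz and must cancel the divergent boundary contribution, which is the centered difference $\frac{|\tanh(\delta/2)|^{\epsilon}}{\sinh^2(\delta/2)}\left(\phi(x+\delta)-\phi(x-\delta)\right)\sim 4(\delta/2)^{\epsilon-1}\pax\phi(x)$, against the divergent part of $-(1-\epsilon)\pax\phi(x)\int_{|z|>\delta}\frac{|\tanh(z/2)|^{\epsilon}}{\sinh^2(z/2)}dz$; your closed-form value $\frac{4}{1-\epsilon}\left(\tanh^{\epsilon-1}(\delta/2)-1\right)$ for that integral is correct (substitution $u=\tanh(z/2)$), the residual after cancellation is $O(\delta^{1+\epsilon})$, and the finite part indeed yields exactly $+4\pax\phi(x)$, as is your kernel-derivative identity $\pax k_\epsilon=-k_\epsilon\left(\coth(x-\eta)+\frac{1-\epsilon}{\sinh(x-\eta)}\right)$. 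Both mechanisms encode the same cancellation: the paper's counterterm buys a boundary-free differentiation under the integral sign, while your route buys an explicit verification of the constant $4$; moreover your fixed-domain change of variables, which identifies the second representation with $\Lambda_l^{1-\epsilon}\pax\phi$, also supplies the uniform convergence needed to legitimize passing the derivative to the limit, so the argument is complete.
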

\begin{proof}
The proof of the first two statement follows from \eqref{IVlambdaeps}. For the proof of the third part we recall some useful facts: if $|y|\geq\delta>0$, due to the Mean Value Theorem, we get
\begin{equation}\label{IVfact1}\left||\tanh(y)|^\epsilon-1\right|=\left|\frac{d}{d\gamma}|\tanh(y)|^\gamma\big{|}_{\gamma=\xi}\epsilon\right|\leq \epsilon|\log\left(|\tanh(y)|\right)|,
\end{equation}
and
\begin{equation}\label{IVfact1.b}
\int_0^\infty|\log\left(|\tanh(y)|\right)|dy\leq c<\infty.
\end{equation}
Now the proof follows in a straightforward way. For the last statement we use the cancellation coming from the principal value to define
$$
F_R(x)=\int_{1/R<|x-\eta|<R}\frac{(\phi(x)-\phi(\eta))|\tanh((x-\eta)/2)|^\epsilon}{\sinh^2\left(\frac{x-\eta}{2}\right)}-2\pax\phi(x)\frac{|\tanh((x-\eta)/2)|^\epsilon}{\tanh((x-\eta)/2)}d\eta.
$$
Using the uniform convergence of the derivative, we conclude the result.
\end{proof}

\subsection{Maximum principle for $\feps$}
In this section we prove an \emph{a priori} bound for $\feps$. To simplify notation we define
\begin{equation}\label{IVtheta}
\theta=\frac{\feps(x)-\feps(\eta)}{2} \text{ and } \bar{\theta}=\frac{\feps(x)+\feps(\eta)}{2}.
\end{equation}

\begin{prop}\label{IVMPf}
Let $f_0\in W^{1,\infty}(\RR)$ be the initial datum in \eqref{IVdefi}, define $\feps_0$ as in \eqref{IVinitial} and let $\feps$ be the classical solution of \eqref{IVeqreg} corresponding to the initial datum $\feps_0$. Then $\feps$ verifies
$$
\|\feps(t)\|_{L^\infty(\RR)}\leq \|\feps_0\|_{L^\infty(\RR)} \leq \|f_0\|_{L^\infty(\RR)}.
$$
Moreover, if $f_0$ has a sign then this sign is preserved during the evolution of $\feps$.
\end{prop}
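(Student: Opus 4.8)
The plan is the classical extremum argument. Write $M^\epsilon(t):=\|\feps(\cdot,t)\|_{L^\infty(\RR)}$; since $\feps(\cdot,t)$ lies in $H^3(\RR)\hookrightarrow C^2(\RR)$ and decays at infinity, the supremum is attained at some $x_t$, and by a standard pointwise lemma (as in C\'ordoba--C\'ordoba) the map $t\mapsto M^\epsilon(t)$ is Lipschitz with $\tfrac{d}{dt}M^\epsilon(t)=\pat\feps(x_t,t)$ at a positive maximum (and $-\pat\feps(x_t,t)$ at a negative minimum) for a.e.\ $t$. As \eqref{IVeqreg} is odd — $-\feps$ solves it whenever $\feps$ does, each $\Xi_i^\epsilon$ being odd in $\feps$ and the other terms linear — it suffices to estimate $\pat\feps(x_t)$ at a positive maximum; set $M=M^\epsilon(t)=\feps(x_t)>0$, so $\feps(x_t-\eta)\le M$ for every $\eta$, and (the solution being classical with interface not reaching the walls, i.e.\ $M<\pi/2$ in the present normalisation — consistent with the claimed bound and with \eqref{IVH4}, and propagated by continuity from $t=0$) every trigonometric quantity below is finite.

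At $x_t$ we have $\pax\feps(x_t)=0$, $\pax^2\feps(x_t)\le0$, so in \eqref{IVeqreg} the terms $\epsilon\pax^2\feps(x_t)$ and $-\sqrt\epsilon\,\alpha_1\feps(x_t)=-\sqrt\epsilon\,\alpha_1M$ are $\le0$; and since $\feps(x_t)-\feps(x_t-\eta)\ge0$ while the kernels of $\Lambda_l^{1-\epsilon},\Lambda_l^{1-3\epsilon}$ and of $\Lambda_l-\Lambda_l^{1-\epsilon},\Lambda_l-\Lambda_l^{1-3\epsilon}$ are nonnegative (the last two because $|\tanh(\eta/2)|\le1$), all five dissipative contributions are $\le0$. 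The whole difficulty is the nonlinear block $2\,\text{PV}\int_\RR(\Xi_1^\epsilon+\Xi_2^\epsilon)(x_t,\eta)\,d\eta$. I would combine the two arctangents through the addition formula — available because the product of their arguments is $\tan\theta\tan\bar\theta$ independently of $\epsilon$ (the weights $|\tanh(\eta/2)|^{\pm\epsilon}$ cancel) — and, using $\theta+\bar\theta=\feps(x)$, write $\Xi_1^\epsilon+\Xi_2^\epsilon=\pax\arctan(\cdot)$ whose argument at $x=x_t$ equals, for $\epsilon=0$, $\dfrac{\cosh\eta\sin M-\sin\feps(x_t-\eta)}{\sinh\eta\cos M}$, plus $O(\epsilon)$ corrections carrying the regularising weights. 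Because $\pax\feps(x_t)=0$, the $x$-derivative of this arctangent becomes a multiple of $\pa_\eta\sin\feps(x_t-\eta)$, and an integration by parts in $\eta$ against the principal value — legitimate by the decay of $\feps$ — produces a boundary term and a remainder: the endpoints ($\arctan(\pm\tan M)=\pm M$ as $\eta\to\pm\infty$, $0$ as $\eta\to0^\pm$) yield $-4M$, and
\[
2\,\text{PV}\int_\RR(\Xi_1^\epsilon+\Xi_2^\epsilon)(x_t,\eta)\,d\eta=-4M+2\cos M\,\text{PV}\int_\RR\frac{\cosh\eta\sin\feps(x_t-\eta)-\sin M}{(\sinh\eta\cos M)^2+(\cosh\eta\sin M-\sin\feps(x_t-\eta))^2}\,d\eta
\]
up to $\epsilon$-corrections.

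To close the estimate I would split the numerator as $\cosh\eta(\sin\feps(x_t-\eta)-\sin M)+(\cosh\eta-1)\sin M$: the first summand is $\le0$ because $\feps(x_t-\eta)\le M$ and $\sin$ is increasing on $(-\pi/2,\pi/2)$; the same monotonicity gives $(\cosh\eta\sin M-\sin\feps(x_t-\eta))^2\ge(\cosh\eta-1)^2\sin^2M$, so the denominator is $\ge(\cosh\eta-1)\big((\cosh\eta+1)\cos^2M+(\cosh\eta-1)\sin^2M\big)=(\cosh\eta-1)(\cosh\eta+\cos2M)$; hence the second summand contributes at most
\[
2\cos M\sin M\int_\RR\frac{d\eta}{\cosh\eta+\cos2M}=2\cos M\sin M\cdot\frac{4M}{\sin2M}=4M,
\]
using $\int_\RR\tfrac{d\eta}{\cosh\eta+\cos\beta}=\tfrac{2\beta}{\sin\beta}$ with $\beta=2M\in(0,\pi)$. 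So the remainder is $\le4M$, cancels the $-4M$ of the endpoints, and the nonlinear block is $\le0$. Altogether $\tfrac{d}{dt}M^\epsilon(t)\le0$ for a.e.\ $t$, whence $\|\feps(t)\|_{L^\infty}\le\|\feps_0\|_{L^\infty}=\big\|\tfrac{\jeps f_0}{1+\epsilon^2x^2}\big\|_{L^\infty}\le\|f_0\|_{L^\infty}$. For the sign statement: if $f_0\ge0$ then $\feps_0\ge0$ since $\mathcal J_\epsilon\ge0$, and applying the same computation to $m^\epsilon(t)=\min_x\feps(x,t)$ (equivalently to $-\feps$) shows that at a negative minimum the dissipative terms have nonnegative sign and the nonlinear block is $\ge0$, so $m^\epsilon$ is nondecreasing while negative and therefore stays $\ge0$; the case $f_0\le0$ is symmetric.

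The main obstacle is the nonlinear block at the extremum: recombining $\Xi_1^\epsilon,\Xi_2^\epsilon$, carrying out the $\eta$-integration by parts against the principal value, and — the genuinely delicate point — controlling the $\epsilon$-corrections introduced by the weights $|\tanh(\eta/2)|^\epsilon$ so that the exact cancellation ``endpoints $=-4M$, remainder $\le4M$'', which holds verbatim at $\epsilon=0$, is not destroyed; this is where one uses the nonnegativity at the maximum of the extra diffusive operators built into \eqref{IVeqreg}. The Rademacher-type differentiation of $M^\epsilon(t)$ and the signs of the dissipative terms are routine.
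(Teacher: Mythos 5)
Your frame (Rademacher differentiation of the sup, signs of the dissipative terms at the extremum, oddness reduction, continuity argument for sign preservation) is fine, and your $\epsilon=0$ computation is correct and elegant: combining the two arctangents into $\arctan\bigl(\tfrac{\cosh\eta\sin M-\sin\feps(x_t-\eta)}{\sinh\eta\cos M}\bigr)$, integrating by parts in $\eta$ to get the boundary contribution $-4M$, and bounding the remainder by $4M$ via $\int_\RR\tfrac{d\eta}{\cosh\eta+\cos 2M}=\tfrac{4M}{\sin 2M}$ is essentially the amplitude estimate of the unregularized confined problem. But the proposition is about the regularized system \eqref{IVeqreg} with $\epsilon>0$, and the bound must be uniform in $\epsilon$; this is exactly the point you defer to ``$O(\epsilon)$ corrections controlled by the diffusive operators,'' and there the argument has a genuine gap. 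First, the corrections are not uniformly $O(\epsilon)$: near $\eta=0$ the weights $|\tanh(\eta/2)|^{\pm\epsilon}$ differ from $1$ by an $O(1)$ amount (the bound $\bigl||\tanh y|^{\epsilon}-1\bigr|\leq\epsilon|\log|\tanh y||$ is only useful away from the origin), and your cancellation $-4M+4M=0$ has zero slack, so \emph{every} positive error must be absorbed. Second, the only quantitatively negative term at the maximum is $-\sqrt\epsilon\,\alpha_1 M$ (the $\Lambda_l$-type terms and $\epsilon\pax^2\feps(x_t)$ are merely nonpositive and can vanish), so absorption requires the errors to be bounded by $c(\|f_0\|_{L^\infty})\,\epsilon^{a}M$ with $a>1/2$; but crude estimates of the discrepancy between the $\epsilon$-weighted kernels and the unweighted ones at $x_t$ produce quantities controlled by $\|\pax\feps(t)\|_{L^\infty}$ rather than by $M$, and the slope bound (Proposition \ref{IVMPdf}) is proved \emph{after} this proposition, only under the smallness hypotheses, and itself uses the amplitude bound — so your route risks circularity, while Proposition \ref{IVMPf} is stated for arbitrary $f_0\in W^{1,\infty}$ with no smallness assumptions and with $\alpha_i$ not yet at your disposal for this purpose.

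The paper closes precisely this point by never producing an error term: it rewrites the linear contribution $4\feps(x_t)$ as an integral carrying \emph{the same} $\epsilon$-weights as the nonlinear kernels (the identity with $\sigma=\tfrac{\pi}{2}-\feps(x_t)$ after \eqref{IVeqreg2}), so that at the extremum the nonlinear block and the $-4\feps$ term combine into $-(1-\epsilon)\int|\tanh(\eta/2)|^{-\epsilon}\,\mathrm{sech}^2(\eta/2)\,\Pi^\epsilon\,d\eta$ with $\Pi^\epsilon$ pointwise nonnegative for every $\epsilon\in(0,1/10)$. If you want to salvage your approach, you would need an analogous exact bookkeeping of the weights (or a proof that the corrections are bounded by $\epsilon^{a}\tan M$ with constants depending only on $\|f_0\|_{L^\infty}$, followed by a corresponding choice of $\alpha_1$); as written, the step ``the exact cancellation is not destroyed'' is asserted, not proved, and it is the heart of the statement.
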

\begin{proof}
Changing variables and taking the derivative we obtain that \eqref{IVeqreg} is equivalent to
\begin{eqnarray}\label{IVeqreg2}
\pat f^\epsilon(x)&=&-(4+\sqrt{\epsilon} \alpha_1) \feps(x)+\epsilon\pax^2\feps(x)-\epsilon \alpha_2\Lambda_{l}^{1-\epsilon}\feps(x)\nonumber\\
&&-\epsilon \alpha_3\Lambda_{l}^{1-3\epsilon}\feps(x)-(\Lambda_l-\Lambda_l^{1-\epsilon})\feps(x)
-\alpha_4(\Lambda_l-\Lambda_l^{1-3\epsilon})\feps(x)\nonumber\\\nonumber
&&+\text{P.V.}\int_\RR\frac{\pax\feps(x)\sec^2(\theta)\frac{|\tanh((x-\eta)/2)|^\epsilon}{\tanh((x-\eta)/2)}+(\epsilon-1)\tan(\theta)\frac{|\tanh((x-\eta)/2)|^\epsilon}{\sinh^2((x-\eta)/2)}}{1+\frac{\tan^2(\theta)|\tanh\left((x-\eta)/2\right)|^{2\epsilon}}{\tanh^2\left((x-\eta)/2\right)}}d\eta\nonumber\\
&&+\text{P.V.}\int_\RR\frac{\pax\feps(x)\sec^2(\bar{\theta})\frac{\tanh((x-\eta)/2)}{|\tanh((x-\eta)/2)|^\epsilon}}{1+\frac{\tan^2(\bar{\theta})\tanh^2\left((x-\eta)/2\right)}{|\tanh\left((x-\eta)/2\right)|^{2\epsilon}}}d\eta\nonumber\\
&&+\text{P.V.}\int_\RR\frac{\text{sech}^2((x-\eta)/2)(1-\epsilon)\tan(\bar{\theta})d\eta}{|\tanh((x-\eta)/2)|^\epsilon\left(1+\frac{\tan^2(\bar{\theta})\tanh^2\left((x-\eta)/2\right)}{|\tanh\left((x-\eta)/2\right)|^{2\epsilon}}\right)},
\end{eqnarray}

If $\|\feps(t)\|_{L^\infty(\RR)}=\max \feps(x,t)$ we define $\feps(x_t)=\|\feps(t)\|_{L^\infty(\RR)}$. Then we have $\pat \feps(x_t)=\frac{d}{dt}\|\feps(t)\|_{L^\infty(\RR)}$ (see \cite{CGO} for the details). If $\|\feps(t)\|_{L^\infty(\RR)}=\min \feps(x,t)$ we write $\feps(x_t)=-\|\feps(t)\|_{L^\infty(\RR)}$ and we get $-\pat \feps(x_t)=\frac{d}{dt}\|\feps(t)\|_{L^\infty(\RR)}$. We compute
\begin{multline*}
4\feps(x)=2\int_\RR\partial_\eta\arctan\left(\tan(\feps(x))\frac{\tanh(\eta/2)}{|\tanh(\eta/2)|^\epsilon}\right)d\eta\\
=-\int_\RR\frac{1}{\cosh^2(\eta/2)}\frac{(\epsilon-1)\tan(\feps(x))|\tanh(\eta/2)|^\epsilon}{|\tanh(\eta/2)|^{2\epsilon}+\tanh^2(\eta/2)\tan^2(\feps(x))}d\eta\\
=-\int_\RR\frac{|\tanh(\eta/2)|^{-\epsilon}}{\cosh^2(\eta/2)}\frac{(\epsilon-1)\cot(\feps(x))}{\cot^2(\feps(x))+\tanh^{2-2\epsilon}(\eta/2)}d\eta.
\end{multline*}
By notational convenience we use the notation $\sigma=\frac{\pi}{2}-\feps(x_t)$ and we define
$$\Pi^\epsilon=\frac{\tan(\theta)}{\tanh^{2-2\epsilon}(\eta/2)+\tan^2(\theta)}
+\frac{\tan(\sigma)}{\tan^2(\sigma)+\tanh^{2-2\epsilon}(\eta/2)}-\frac{\cot(\bar{\theta})}{\tanh^{2-2\epsilon}(\eta/2)+\cot^2(\bar{\theta})}
$$

Evaluating \eqref{IVeqreg2} in $x_t$ we have
\begin{eqnarray*}
\pat f^\epsilon(x_t)&=&-\sqrt{\epsilon} \alpha_1 \feps(x_t)+\epsilon\pax^2\feps(x_t)-\epsilon \alpha_2\Lambda_{l}^{1-\epsilon}\feps(x_t)-\epsilon \alpha_3\Lambda_{l}^{1-3\epsilon}\feps(x_t)\\
&&-(\Lambda_l-\Lambda_l^{1-\epsilon})\feps(x_t)
-\alpha_4(\Lambda_l-\Lambda_l^{1-3\epsilon})\feps(x_t)\nonumber\\\nonumber
&&-(1-\epsilon)\text{P.V.}\int_\RR\frac{|\tanh(\eta/2)|^{-\epsilon}}{\cosh^2(\eta/2)}\Pi^\epsilon d\eta.
\end{eqnarray*}
Using the definition of $\bar{\theta}$ and classical trigonometric identities we have
$$
\cot(\bar{\theta})=\tan\left(\frac{\pi}{2}-\bar{\theta}\right)=\tan\left(\frac{\pi}{2}-f(x_t)+\theta\right)=\frac{\tan(\frac{\pi}{2}-f(x_t))+\tan(\theta)}{1-\tan(\frac{\pi}{2}-f(x_t))\tan(\theta)}.
$$
Putting together all the terms in $\Pi^\epsilon$, we obtain
\begin{multline*}
\Pi^\epsilon=\frac{\tan(\sigma)\tan^2(\theta)[1+\tan^2(\sigma)|\tanh|^{2-2\epsilon}\left(\frac{\eta}{2}\right)][\tan^2(\sigma)+|\tanh|^{2-2\epsilon}\left(\frac{\eta}{2}\right)]^{-1}}{[(\tan(\sigma)+\tan(\theta))^2+(1-\tan(\sigma)\tan(\theta))^2|\tanh|^{2-2\epsilon}\left(\frac{\eta}{2}\right)]}\\
+\frac{2\tan^2(\sigma)\tan(\theta)[1-|\tanh|^{2-2\epsilon}\left(\frac{\eta}{2}\right)]}{[\tan^2(\sigma)+|\tanh|^{2-2\epsilon}\left(\frac{\eta}{2}\right)][(\tan(\sigma)+\tan(\theta))^2+(1-\tan(\sigma)\tan(\theta))^2|\tanh|^{2-2\epsilon}\left(\frac{\eta}{2}\right)]}\\
+\frac{\tan^2(\sigma)\tan(\theta)[1+\tan^2(\theta)|\tanh|^{2-2\epsilon}\left(\frac{\eta}{2}\right)]}{[\tan^2(\theta)+|\tanh|^{2-2\epsilon}\left(\frac{\eta}{2}\right)][(\tan(\sigma)+\tan(\theta))^2+(1-\tan(\sigma)\tan(\theta))^2|\tanh|^{2-2\epsilon}\left(\frac{\eta}{2}\right)]}\\
+\frac{2\tan(\sigma)\tan^2(\theta)[1-|\tanh|^{2-2\epsilon}\left(\frac{\eta}{2}\right)]}{[\tan^2(\theta)+|\tanh|^{2-2\epsilon}\left(\frac{\eta}{2}\right)][(\tan(\sigma)+\tan(\theta))^2+(1-\tan(\sigma)\tan(\theta))^2|\tanh|^{2-2\epsilon}\left(\frac{\eta}{2}\right)]}\\
+\frac{(\tan(\sigma)+\tan(\theta))\tan(\sigma)\tan(\theta)}{(\tan(\sigma)+\tan(\theta))^2+(1-\tan(\sigma)\tan(\theta))^2|\tanh|^{2-2\epsilon}\left(\frac{\eta}{2}\right)}.
\end{multline*}
Assuming that $0<\feps(x_t)=\max_x \feps(x)$, then $0<\tan(\theta),\tan(\sigma)$ and we obtain $\Pi^\epsilon\geq0$ and $\pat\feps(x_t)\leq0$. In the case $\feps(x_t)=\min_x \feps(x)<0$, we have $0>\tan(\theta),\tan(\sigma)$ and we get $\Pi^\epsilon\leq0$ and $\pat\feps(x_t)\geq0$. Integrating this in time, we get
$$
\|\feps(t)\|_{L^\infty(\RR)}\leq \|\feps_0\|_{L^\infty(\RR)}\leq\|f_0\|_{L^\infty(\RR)},
$$
where in the last step we use the definition \eqref{IVinitial}. In order to prove that the initial sign propagates we observe that if $f_0$ is positive (respectively negative) the same remains valid for $\feps_0$. Assume now that $f_0\geq0$ and suppose that the line $y=0$ is reached (if this line is not reached at any time $t$ we are done). We write $\feps(x_t)=\min_x \feps(x,t)=0$. We have $\tan(\theta)<0$, $\sigma=\pi/2$ and we get $\Pi^\epsilon\leq0$ and $\pat \feps(x_t)\geq0$. If $f_0\leq0$ we denote $\feps(x_t)=\max_x \feps(x,t)=0$. We have $\tan(\theta)>0$ and $\Pi^\epsilon\geq0$. Integrating in time we conclude the result.
\end{proof}

\subsection{Maximum principle for $\pax \feps$}
In this section we prove an \emph{a priori} bound for $\pax\feps$. We define 
$$
\mu_1(t)=\frac{\tan\left(\theta\right)}{\tanh\left(\frac{x_t-\eta}{2}\right)},\qquad \mu_2(t)=\tan\left(\bar{\theta}\right)\tanh\left(\frac{x_t-\eta}{2}\right)
$$ 
where $\theta$ and $\bar{\theta}$ are defined in \eqref{IVtheta} and $x_t$ is a critical point for $\pax \feps(x)$. We will use some bounds for $\mu_1$ and, for the reader's convenience, we collect them in the following lemma:
\begin{lem}\label{IVlemmu}
Let $f_0$ be an initial datum that fulfills \eqref{IVH3}, \eqref{IVH4} and \eqref{IVH5} (or \eqref{IVsisder2}), and let $\feps$ be the solution with initial datum $\feps_0$ defined in \eqref{IVinitial}. Then for $\mu_1$ the following inequalities hold
\begin{enumerate}
\item If $|x_t-\eta|\geq1$, due to \eqref{IVH4}, we have
\begin{equation}\label{IVfact2} |\mu_1(t)|\leq \frac{\tan\left(\|\feps (t)\|_{L^\infty}\right)}{\tanh\left(\frac{1}{2}\right)}\leq \frac{\tan\left(\|f_0\|_{L^\infty}\right)}{\tanh\left(\frac{1}{2}\right)}<\|\pax \feps_0\|_{L^ \infty}< 1.
\end{equation}
\item If $|x_t-\eta|\leq1$, we get
\begin{equation}\label{IVmu1}
|\mu_1(t)|\leq c\left(\|f_0\|^2_{L^\infty(\RR)}+1\right)\|\pax \feps(t)\|_{L^\infty(\RR)}.
\end{equation}
\item If $|x_t-\eta|\leq1$ and $x_t$ is the point where $\pax\feps$ reaches its maximum,
\begin{equation}\label{IVfact3}
\mu_1(t)-\pax \feps(x_t)\leq \frac{(x_t-\eta)^2}{48\tanh\left(\frac{1}{2}\right)}\left(|\pax \feps(x_t)|+5|\pax \feps(x_t)|^3\right).
\end{equation}
\item If $|x_t-\eta|\leq1$ and $\mu_1(t)-\pax \feps (x_t)\geq0$
\begin{multline}\label{IVfact4}
0\leq\mu_1^2(t)-(\pax \feps(x_t))^2\\
\leq\frac{(x_t-\eta)^2}{48\tanh\left(\frac{1}{2}\right)}\left(|\pax \feps(x_t)|+5|\pax \feps(x_t)|^3\right)\left(|\pax \feps(x_t)|+\frac{\tan\left(\frac{|\pax \feps(x_t)|}{2}\right)}{\tanh\left(\frac{1}{2}\right)}\right).
\end{multline}
\end{enumerate}
\end{lem}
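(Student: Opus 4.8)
The plan is to establish the four bounds by elementary one-variable estimates for $\tan$ and $\tanh$ on bounded intervals, using the ordering $\tanh u\le u\le\tan u$ for $u\ge0$, the monotonicity and convexity of $\tan$ on $(-\pi/2,\pi/2)$, and — for the last two parts — the fact that $x_t$ is a maximum of $\pax\feps$. Throughout I write $h:=x_t-\eta$ and $p:=\pax\feps(x_t)$, and I use repeatedly the maximum principle of Proposition \ref{IVMPf} together with the fact that, since $\feps_0$ inherits \eqref{IVH3}--\eqref{IVH4} for $\epsilon$ small, $\|\feps(t)\|_{L^\infty}\le\|f_0\|_{L^\infty}<\pi/2$ and $\|\pax\feps_0\|_{L^\infty}<1$. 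In particular $\theta$ and $hp/2$ stay in a fixed compact subset of $(-\pi/2,\pi/2)$, so $\tan$ and $1/\cos$ are bounded there, and $|\tanh(h/2)|=\tanh(|h|/2)\ge\tanh(1/2)|h|$ whenever $|h|\le1$ (because $u\mapsto(\tanh u)/u$ is decreasing).

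Parts (1) and (2) are direct. If $|h|\ge1$ then $|\tan\theta|\le\tan(\|\feps(t)\|_{L^\infty})$ (from $|\theta|\le\|\feps(t)\|_{L^\infty}$ and monotonicity of $\tan$) while $|\tanh(h/2)|\ge\tanh(1/2)$, which gives the first inequality in \eqref{IVfact2}; the remaining inequalities are Proposition \ref{IVMPf} followed by hypotheses \eqref{IVH4} and \eqref{IVH3} for $\feps_0$. If $|h|\le1$, the Mean Value Theorem gives $\theta=\tfrac12\pax\feps(\xi)h$, so $|\theta|\le\tfrac12\|\pax\feps(t)\|_{L^\infty}|h|$; since $|\theta|\le\|f_0\|_{L^\infty}$ lies in a fixed compact subinterval of $(-\pi/2,\pi/2)$, the smooth function $s\mapsto(\tan s)/s$ is bounded there by $1+cs^2$, whence $|\tan\theta|\le c(1+\|f_0\|_{L^\infty}^2)|\theta|$, and dividing by $|\tanh(h/2)|\ge\tanh(1/2)|h|$ yields \eqref{IVmu1}.

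Part (3) is the technical heart, and the place where I expect the real work. The difficulty is that the right-hand side of \eqref{IVfact3} must involve only $p$, whereas a naive Taylor expansion of $\feps$ at $x_t$ (where $\pax^2\feps(x_t)=0$) immediately brings in $\pax^3\feps$, which is not controlled by the a priori bounds. I would instead exploit that $x_t$ is the \emph{maximum} of $\pax\feps$: writing $\theta=\tfrac12\int_0^{h}\pax\feps(x_t-h+\tau)\,d\tau$ and using $\pax\feps\le p$ along the segment, one gets $\theta\le hp/2$ for $h>0$ and $\theta\ge hp/2$ for $h<0$. Adding and subtracting $\tan(hp/2)/\tanh(h/2)$ and using monotonicity of $\tan$ together with the sign of $\tanh(h/2)$ gives, in either case,
\[\mu_1(t)-p=\frac{\tan\theta-\tan(hp/2)}{\tanh(h/2)}+\frac{\tan(hp/2)-p\tanh(h/2)}{\tanh(h/2)}\le\frac{\tan(hp/2)-p\tanh(h/2)}{\tanh(h/2)},\]
since the first quotient is $\le0$. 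The last expression is now \emph{explicit}: splitting $\tan(hp/2)-p\tanh(h/2)=[\tan(hp/2)-hp/2]+p[h/2-\tanh(h/2)]$ and using $|\tan s-s|\le c|s|^3$ and $|u-\tanh u|\le|u|^3/3$ on $|s|,|u|\le1/2$, the numerator is $\le c\,|h|^3(|p|+|p|^3)$; dividing by $|\tanh(h/2)|\ge\tanh(1/2)|h|$ gives a bound of exactly the form of the right-hand side of \eqref{IVfact3}. The delicate points are the careful separate (but symmetric) treatment of the signs of $h$, and the bookkeeping of the numerical constants (the cubic coefficient $5$ being generous precisely so as to absorb all Taylor remainders over the ranges $|h|\le1$, $|p|\le1$).

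Part (4) follows quickly from (3). Factor $\mu_1^2-p^2=(\mu_1-p)(\mu_1+p)$. The hypothesis $\mu_1-p\ge0$, together with $p\ge0$ (which holds in the situation where this estimate is applied, namely at the point realizing $\max_x\pax\feps=\|\pax\feps(t)\|_{L^\infty}$), gives $0\le\mu_1^2-p^2$. For the upper bound, estimate the first factor by \eqref{IVfact3}; for the second, note $|\theta|\le|h|\,|p|/2$ in that max-norm situation, hence by convexity $|\tan\theta|\le\tan(|h|\,|p|/2)\le|h|\tan(|p|/2)$, and dividing by $|\tanh(h/2)|\ge\tanh(1/2)|h|$ gives $|\mu_1|\le\tan(|p|/2)/\tanh(1/2)$, so $\mu_1+p\le|p|+\tan(|p|/2)/\tanh(1/2)$. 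Multiplying the two bounds yields \eqref{IVfact4}.
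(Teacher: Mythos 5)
Your handling of parts (1), (2) and (4) is correct, and your scheme for part (3) — compare $\tan\theta$ with $\tan(hp/2)$ using that $x_t$ maximizes $\pax\feps$, then Taylor-expand $\tan(hp/2)-p\tanh(h/2)$ and divide by $|\tanh(h/2)|\geq\tanh(1/2)|h|$ — is essentially the paper's own argument, which is sketched as the splitting $\frac{\tan\theta}{\tanh(h/2)}=\frac{\tan\theta-\theta}{\tanh(h/2)}+\frac{\theta}{\tanh(h/2)}$ plus Taylor's theorem plus Proposition \ref{IVMPf}. So the route is the intended one; the problem is your claim that the numerical bookkeeping closes.

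It does not, and this is a genuine gap in part (3). Your two remainders are $|\tan s-s|\leq\frac{|s|^3}{6}\sup|\partial^3\tan|$ with $s=hp/2$ (this is where the cubic coefficient comes from: $\sup_{|s|\leq1/2}|\partial^3\tan(s)|=2(2-\cos 1)\sec^4(1/2)\approx4.92\leq5$, giving exactly $\frac{5|p|^3(x_t-\eta)^2}{48\tanh(1/2)}$, so that piece is fine) and $|u-\tanh u|\leq|u|^3/3$ with $u=h/2$; after dividing by $\tanh(1/2)|h|$ the latter yields $\frac{(x_t-\eta)^2}{24\tanh(1/2)}|p|$, i.e.\ \emph{twice} the linear coefficient $\frac{1}{48\tanh(1/2)}$ asserted in \eqref{IVfact3}. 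This factor of two cannot be absorbed into the "generous" $5|p|^3$ term when $p$ is small, and it is not recoverable by sharper bookkeeping: if $\pax\feps$ equals its maximum $p$ on a short interval to the left of $x_t$ (admissible under \eqref{IVH3}--\eqref{IVH5} for small $p$ and small amplitude), then $\theta=hp/2$ exactly and $\mu_1(t)-p=\frac{p(1+p^2)}{12}h^2+O(h^4)$, which exceeds $\frac{h^2}{48\tanh(1/2)}(p+5p^3)$ for all small $h$ once $p\lesssim1/2$. So your argument proves \eqref{IVfact3} (and hence the upper bound in \eqref{IVfact4}) only with $24$ in place of $48$, and the stated constant is in fact out of reach of this method. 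Since the $1/48$ is exactly what is used later, via \eqref{IVeqI1bound} in Lemma \ref{IVlemdf6} and the estimate $\int_{B(0,1)}\eta^2\sinh^{-2}(\eta/2)\,d\eta\leq8$, to match the $6\tanh(1/2)$ appearing in hypothesis \eqref{IVH5}, the discrepancy is not cosmetic: you should either prove the lemma with the weaker constant and propagate the factor $2$ through Lemma \ref{IVlemdf6} and the smallness hypothesis, or identify the additional ingredient (not present in your write-up, nor in the paper's sketch) that would justify the constant as printed.
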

\begin{proof}
To prove this lemma we use the following splitting
$$
\frac{\tan(\theta)}{\tanh((x_t-\eta)/2)}=\frac{\tan(\theta)-\theta}{\tanh((x_t-\eta)/2)}\\
+\frac{\theta}{\tanh((x_t-\eta)/2)},
$$
Taylor's theorem and the appropriate bounds using Proposition \ref{IVMPf}.
\end{proof}

First, we assume $\pax \feps(x_t)=\max_x \pax \feps(x,t)$. Notice that we can take $0<\epsilon<1/10$ small enough to ensure that $\feps(x,0)$ defined in \eqref{IVinitial} also fulfills the hypotheses \eqref{IVH3}, \eqref{IVH4} and \eqref{IVH5}. From \eqref{IVeqreg2}, taking one derivative and using Lemma \ref{IVops}, we get
\begin{eqnarray}
\pat \pax \feps(x_t)&=&-8\pax \feps(x_t)-\sqrt{\epsilon} \alpha_1\pax \feps(x_t)+\epsilon\pax^3\feps(x_t)-\epsilon \alpha_2\Lambda_{l}^{1-\epsilon}\feps(x_t) \nonumber\\
&&-\epsilon \alpha_3\Lambda_{l}^{1-3\epsilon}\feps(x_t)-\alpha_4(\Lambda_l-\Lambda_l^{1-3\epsilon})\feps(x_t)
-(\Lambda_l-\Lambda_l^{1-\epsilon})\feps(x_t) \label{IVeqreg31}\\
&&+\text{P.V.}\int_\RR \mathcal{I}_1d\eta+\text{P.V.}\int_\RR \mathcal{I}_2d\eta+\text{P.V.}\int_\RR \mathcal{I}_3d\eta\label{IVeqreg35}
\end{eqnarray}
where $\mathcal{I}_1$ is the integral corresponding to $\Xi_1^\epsilon$, $\mathcal{I}_2$ is the integral corresponding to $\Xi_2^\epsilon$ and
\begin{eqnarray}
\mathcal{I}_3&=&+\epsilon\text{P.V.}\int_\RR\frac{|\tanh(\eta/2)|^\epsilon\pax\theta\left(1-|\tanh(\eta/2)|^{2\epsilon}\mu_1^2(t)\right)d\eta}{\sinh^2(\eta/2)\cos^2(\theta)\left(1+\frac{\tan^2(\theta)|\tanh\left(\eta/2\right)|^{2\epsilon}}{\tanh^2\left(\eta/2\right)}\right)^2}\label{IVeqreg32}\\
&&-\epsilon\text{P.V.}\int_\RR\frac{|\tanh(\eta/2)|^{-\epsilon}\pax\bar{\theta}d\eta}{\cosh^2(\eta/2)\cos^2(\bar{\theta})\left(1+\frac{\tan^2(\bar{\theta})\tanh^2\left(\eta/2\right)|}{|\tanh\left(\eta/2\right)|^{2\epsilon}}\right)} \label{IVeqreg33}\\
&&+\epsilon\text{P.V.}\int_\RR\frac{\mu_2^2(t)|\tanh((x_t-\eta)/2)|^{-3\epsilon}2\pax\bar{\theta}d\eta}{\cosh^2((x_t-\eta)/2)\cos^2(\bar{\theta})\left(1+\frac{\tan^2(\bar{\theta})\tanh^2\left((x-\eta)/2\right)|}{|\tanh\left((x-\eta)/2\right)|^{2\epsilon}}\right)^2}. \label{IVeqreg34}
\end{eqnarray}
This extra term appear from the regularization present in both $\Xi_i^\epsilon$.

We have
$$
\mathcal{I}_1=\Gamma_1+\epsilon\Gamma_2
$$
where
\begin{multline}\label{IVeqGamma1}
\Gamma_1=\frac{\left(|\tanh((x_t-\eta)/2)|^{3\epsilon}-1\right)\Gamma_1^1}{\left(1+\frac{\tan^2(\theta)|\tanh\left((x-\eta)/2\right)|^{2\epsilon}}{\tanh^2\left((x-\eta)/2\right)}\right)^2}
+\frac{\left(|\tanh((x_t-\eta)/2)|^\epsilon-1\right)\Gamma_1^2}{\left(1+\frac{\tan^2(\theta)|\tanh\left((x-\eta)/2\right)|^{2\epsilon}}{\tanh^2\left((x-\eta)/2\right)}\right)^2}\\
+\frac{\Gamma_1^1+\Gamma_1^2}{\left(1+\frac{\tan^2(\theta)|\tanh\left((x-\eta)/2\right)|^{2\epsilon}}{\tanh^2\left((x-\eta)/2\right)}\right)^2},
\end{multline}
with
$$
\Gamma_1^1=\frac{-(\pax\feps(x_t))^2\mu_1}{\cos^2(\theta)\tanh^2((x_t-\eta)/2)}+\frac{\mu_1^3}{\cosh^2((x_t-\eta)/2)}+\frac{\pax \feps(x_t)\mu_1^2}{\sinh^2((x_t-\eta)/2)\cos^2(\theta)},
$$
and
$$
\Gamma_1^2=\frac{\mu_1-\pax\feps(x_t)}{\sinh^2((x_t-\eta)/2)}-\frac{\pax\feps(x_t)\mu_1^2}{\cosh^2((x_t-\eta)/2)}+\frac{(\pax \feps(x_t))^2\mu_1}{\cos^2(\theta)}.
$$
The second term is given by
\begin{equation}\label{IVeqGamma2}
\Gamma_2=\frac{|\tanh((x_t-\eta)/2)|^{3\epsilon}\Gamma_2^1}{2\left(1+\frac{\tan^2(\theta)|\tanh\left((x-\eta)/2\right)|^{2\epsilon}}{\tanh^2\left((x-\eta)/2\right)}\right)^2}
+\frac{|\tanh((x_t-\eta)/2)|^\epsilon\Gamma_2^2}{2\left(1+\frac{\tan^2(\theta)|\tanh\left((x-\eta)/2\right)|^{2\epsilon}}{\tanh^2\left((x-\eta)/2\right)}\right)^2},
\end{equation}
where
$$
\Gamma_2^1=\mu_1^2(t)\frac{\frac{\pax\feps(x_t)}{\cos^2(\theta)}+\frac{-\mu_1(t)}{\cosh^2((x_t-\eta)/2)}}{\sinh^2((x_t-\eta)/2)},\text{ and } \Gamma_2^2=\frac{\frac{\pax\feps(x_t)}{\cos^2(\theta)}+\frac{-\mu_1(t)}{\cosh^2((x_t-\eta)/2)}}{\sinh^2((x_t-\eta)/2)}.
$$
We compute 
$$
\mathcal{I}_2=\Omega_1+\epsilon\Omega_2,
$$
with 
\begin{multline}\label{IVeqOmega1}
\Omega_1=\frac{\left(|\tanh((x_t-\eta)/2)|^{-3\epsilon}-1\right)\Omega_1^1}{\left(1+\frac{\tan^2(\bar{\theta})\tanh^2\left((x_t-\eta)/2\right)}{|\tanh\left((x_t-\eta)/2\right)|^{2\epsilon}}\right)^2}
+\frac{\left(|\tanh((x_t-\eta)/2)|^{-\epsilon}-1\right)\Omega_1^2}{\left(1+\frac{\tan^2(\bar{\theta})\tanh^2\left((x_t-\eta)/2\right)}{|\tanh\left((x_t-\eta)/2\right)|^{2\epsilon}}\right)^2}\\
+\frac{\Omega_1^1+\Omega_1^2}{\left(1+\frac{\tan^2(\bar{\theta})\tanh^2\left((x_t-\eta)/2\right)}{|\tanh\left((x_t-\eta)/2\right)|^{2\epsilon}}\right)^2},
\end{multline}
where
\begin{multline*}
\Omega_1^1=-\frac{\pax\feps(x_t)\mu_2^2(t)\sec^2(\bar{\theta})}{\cosh^2((x_t-\eta)/2)}+(\pax\feps(x_t))^2\mu_2^3(t)\sec^2(\bar{\theta})-
\frac{\mu_2^3(t)}{\cosh^2((x_t-\eta)/2)}\\
-(\pax\feps(x_t))^2\mu_2(t)\tanh^2((x_t-\eta)/2)\sec^4(\bar{\theta})-\frac{\tan^ 2(\bar{\theta})\mu_2(t)}{\cosh^4((x_t-\eta)/2)},
\end{multline*}
and
$$
\Omega_1^2=\frac{\pax\feps(x_t)\sec^2(\bar{\theta})-\mu_2(t)}{\cosh^2((x_t-\eta)/2)}+(\pax\feps(x_t))^2\mu_2(t)\sec^ 2(\bar{\theta}).
$$
The second term is given by
\begin{multline}\label{IVeqOmega2}
\Omega_2=\frac{|\tanh((x_t-\eta)/2)|^{-3\epsilon}\left(\frac{\pax\feps(x_t)\mu_2^2(t)\sec^ 2(\bar{\theta})}{2\cosh^2((x_t-\eta)/2)}+\frac{\tan^2(\bar{\theta})\mu_2(t)}{2\cosh^4((x_t-\eta)/2)}\right)}{\left(1+\frac{\tan^2(\bar{\theta})\tanh^2\left((x_t-\eta)/2\right)}{|\tanh\left((x_t-\eta)/2\right)|^{2\epsilon}}\right)^2}\\
+\frac{|\tanh((x_t-\eta)/2)|^{-\epsilon}\left(\frac{-\pax\feps(x_t)\sec^ 2(\bar{\theta})}{2\cosh^2((x_t-\eta)/2)}-\frac{\tan(\bar{\theta})}{2\cosh^4((x_t-\eta)/2)\tanh((x_t-\eta)/2)}\right)}{\left(1+\frac{\tan^2(\bar{\theta})\tanh^2\left((x_t-\eta)/2\right)}{|\tanh\left((x_t-\eta)/2\right)|^{2\epsilon}}\right)^2}.
\end{multline}

We need to obtain the local decay $\|\pax\feps(t)\|_{L^\infty(\RR)}\leq\|\pax\feps(0)\|_{L^\infty(\RR)}$ for $0\leq t<t^*$. Assuming the classical solvability for \eqref{IVeqreg} with an initial datum $f_0$ fulfilling the hypotheses \eqref{IVH3},  \eqref{IVH4} and \eqref{IVH5} we have that $\feps(x,\delta)$ also fulfills \eqref{IVH3}, \eqref{IVH4} and \eqref{IVH5} if $0\leq\delta<<1$ is small enough.  Recall that $\pax \feps (x_\delta)=\|\pax \feps (\delta)\|_{L^\infty(\RR)}$ and $\pax\theta>0$. The linear terms in \eqref{IVeqreg31} have the appropriate sign and they will be used to control the the positive contributions of the nonlinear terms. We need to prove that $\pat\pax \feps(x_\delta)<0$. For the sake of simplicity, we split the proof of this inequality in different lemmas.

\begin{lem}\label{IVlemdf1}
If $\alpha_2>2\sec^2(\|f_0\|_{L^\infty(\RR)})$, we have
$$
\mathcal{I}_3\leq \epsilon c\tan^ 2\left(\|f_0\|_{L^ \infty(\RR)}\right)\sec^ 2\left(\|f_0\|_{L^ \infty(\RR)}\right)\pax \feps(x_\delta)
$$
\end{lem}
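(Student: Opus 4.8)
The plan is to split $\mathcal{I}_3$ into its three constituent integrals \eqref{IVeqreg32}, \eqref{IVeqreg33} and \eqref{IVeqreg34} and treat them separately. Throughout I would use the amplitude bound of Proposition \ref{IVMPf}, which keeps $|\theta|,|\bar\theta|\le\|\feps\|_{L^\infty(\RR)}\le\|f_0\|_{L^\infty(\RR)}<\pi/2$ and hence $\cos^{-2}(\theta),\cos^{-2}(\bar\theta)\le\sec^2(\|f_0\|_{L^\infty(\RR)})$, together with the fact that at the maximum point $x_\delta$ of $\pax\feps$ the $x$-derivatives $\pax\theta=\tfrac12(\pax\feps(x_\delta)-\pax\feps(x_\delta-\eta))$ and $\pax\bar\theta=\tfrac12(\pax\feps(x_\delta)+\pax\feps(x_\delta-\eta))$ are both nonnegative and bounded by $\pax\feps(x_\delta)$.

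The two terms \eqref{IVeqreg33} and \eqref{IVeqreg34} are nonsingular and handled directly. The integrand of \eqref{IVeqreg33} is nonnegative at $x_\delta$ and carries an overall minus sign, so it only helps and may be discarded. For \eqref{IVeqreg34} I would bound the denominator below by $1$, estimate $\mu_2^2(t)=\tan^2(\bar\theta)\tanh^2((x_\delta-\eta)/2)\le\tan^2(\|f_0\|_{L^\infty(\RR)})$, $\sec^2(\bar\theta)\le\sec^2(\|f_0\|_{L^\infty(\RR)})$, $2\pax\bar\theta\le2\pax\feps(x_\delta)$, and dominate $\int_\RR|\tanh(\eta/2)|^{-3\epsilon}\tanh^2(\eta/2)\,\text{sech}^2(\eta/2)\,d\eta$ by a constant independent of $\epsilon\in(0,1/10)$: the factor $\tanh^2(\eta/2)$ neutralizes the singularity $|\tanh(\eta/2)|^{-3\epsilon}$ near the origin, $\text{sech}^2$ gives exponential decay at infinity, and $\big||\tanh(\eta/2)|^{-3\epsilon}-1\big|\le c\,\epsilon\,\big|\log|\tanh(\eta/2)|\big|$ (as in \eqref{IVfact1}) with \eqref{IVfact1.b} covers the remaining range. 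This contributes exactly the claimed bound $\epsilon c\tan^2(\|f_0\|_{L^\infty(\RR)})\sec^2(\|f_0\|_{L^\infty(\RR)})\pax\feps(x_\delta)$.

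The delicate term is \eqref{IVeqreg32}: its kernel, even with the prefactor $\epsilon$, behaves like $|\eta|^{\epsilon-2}$ near $\eta=0$ if one uses only the crude bound $|\pax\theta|\le\pax\feps(x_\delta)$, so it is not absolutely integrable and must be paired with the diffusive term $-\epsilon\alpha_2\Lambda_l^{1-\epsilon}\pax\feps(x_\delta)$ of \eqref{IVeqreg31} — this is where the hypothesis $\alpha_2>2\sec^2(\|f_0\|_{L^\infty(\RR)})$ enters. On the set where the numerator of \eqref{IVeqreg32} is nonnegative one necessarily has $|\tanh(\eta/2)|^{2\epsilon}\mu_1^2(t)\le1$, so the envelope $\cos^{-2}(\theta)\,(1-|\tanh(\eta/2)|^{2\epsilon}\mu_1^2(t))\,(1+|\tanh(\eta/2)|^{2\epsilon}\mu_1^2(t))^{-2}$ never exceeds $\sec^2(\|f_0\|_{L^\infty(\RR)})$, and what remains of the kernel is, up to the factor $\tfrac12$, precisely the integrand of $\Lambda_l^{1-\epsilon}\pax\feps(x_\delta)$ in the last identity of part (4) of Lemma \ref{IVops}. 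Since $x_\delta$ maximizes $\pax\feps$, that representation shows $\Lambda_l^{1-\epsilon}\pax\feps(x_\delta)\ge0$ (and extending the positive-set integral to all of $\RR$ only adds nonnegative terms), so the positive part of \eqref{IVeqreg32} is at most $\tfrac{\epsilon}{2}\sec^2(\|f_0\|_{L^\infty(\RR)})\,\Lambda_l^{1-\epsilon}\pax\feps(x_\delta)$, while the complementary set only lowers the integral. Adding $-\epsilon\alpha_2\Lambda_l^{1-\epsilon}\pax\feps(x_\delta)$ and using $\alpha_2>2\sec^2(\|f_0\|_{L^\infty(\RR)})>\tfrac12\sec^2(\|f_0\|_{L^\infty(\RR)})$ makes the combined contribution nonpositive, so only the bound already obtained from \eqref{IVeqreg34} survives.

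The main obstacle is exactly this last comparison: showing that the whole singular kernel of \eqref{IVeqreg32}, and not merely its behaviour near the diagonal, is dominated by $\Lambda_l^{1-\epsilon}\pax\feps(x_\delta)$ with no loss of a power of $\epsilon$ and with a constant depending only on $\|f_0\|_{L^\infty(\RR)}$. This requires tracking the sign of $\pax\theta$ through the maximum-point property and controlling $\mu_1(t)$ via Proposition \ref{IVMPf} and Lemma \ref{IVlemmu}, so that the envelope estimate by $\sec^2(\|f_0\|_{L^\infty(\RR)})$ holds on the entire set that matters.
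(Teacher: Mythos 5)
Your argument is correct and follows essentially the same route as the paper: drop \eqref{IVeqreg33} by its sign at the maximum point, bound \eqref{IVeqreg34} directly (yielding the stated $\epsilon c\tan^2\sec^2$ term), and absorb \eqref{IVeqreg32} into the $\epsilon\alpha_2\Lambda_l^{1-\epsilon}\pax\feps(x_\delta)$ dissipation via $\pax\theta\geq0$ at $x_\delta$, the envelope bound by $\sec^2(\|f_0\|_{L^\infty(\RR)})$, and the kernel representation in part (4) of Lemma \ref{IVops}. The only cosmetic difference is bookkeeping: the paper spends only half of the $\alpha_2$ dissipation here (saving the rest for Lemma \ref{IVlemdf2}), whereas you invoke the full term, but your estimate in fact only needs the fraction $\tfrac12\sec^2(\|f_0\|_{L^\infty(\RR)})<\alpha_2$, so the argument is compatible with the paper's allocation.
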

\begin{proof}
Using the linear term $\Lambda^{1-\epsilon}_l$ to control \eqref{IVeqreg32}, we have
\begin{multline*}
A_1=\epsilon\text{P.V.}\int_\RR\frac{|\tanh(\eta/2)|^\epsilon\pax\theta\left(1-|\tanh(\eta/2)|^{2\epsilon}\mu_1^2(\delta)\right)d\eta}{\sinh^2(\eta/2)\cos^2(\theta)\left(1+\frac{\tan^2(\theta)|\tanh\left(\eta/2\right)|^{2\epsilon}}{\tanh^2\left(\eta/2\right)}\right)^2}
-\epsilon \frac{\alpha_2}{2}\Lambda_l^{1-\epsilon}\pax\feps(x_\delta)\\
=\epsilon\text{P.V.}\int_\RR\frac{|\tanh(\eta/2)|^\epsilon\pax\theta\left(\frac{1}{\cos^2(\theta)\left(1+\frac{\tan^2(\theta)|\tanh\left(\eta/2\right)|^{2\epsilon}}{\tanh^2\left(\eta/2\right)}\right)^2}-\frac{\alpha_2}{2}\right)d\eta}{\sinh^2(\eta/2)}\\
-\epsilon\text{P.V.}\int_\RR\frac{|\tanh(\eta/2)|^\epsilon\pax\theta|\tanh(\eta/2)|^{2\epsilon}\mu_1^2(\delta)d\eta}{\sinh^2(\eta/2)\cos^2(\theta)\left(1+\frac{\tan^2(\theta)|\tanh\left(\eta/2\right)|^{2\epsilon}}{\tanh^2\left(\eta/2\right)}\right)^2}<0,
\end{multline*}
if $\alpha_2/2>\sec^2(\|f_0\|_{L^\infty(\RR)})$. Due to $\pax\feps(x_\delta)=\|\pax\feps(\delta)\|_{L^\infty(\RR)}$, we have $\pax\bar{\theta}>0$. Then, the term \eqref{IVeqreg33} is
\begin{equation*}
A_2=-\epsilon\text{P.V.}\int_\RR\frac{|\tanh(\eta/2)|^{-\epsilon}\pax\bar{\theta}d\eta}{\cosh^2(\eta/2)\cos^2(\bar{\theta})\left(1+\frac{\tan^2(\bar{\theta})\tanh^2\left(\eta/2\right)}{|\tanh\left(\eta/2\right)|^{2\epsilon}}\right)} <0.
\end{equation*}
The term \eqref{IVeqreg34} is
\begin{multline*}
A_3=\epsilon\text{P.V.}\int_\RR\frac{\mu_2^2(\delta)|\tanh((x_\delta-\eta)/2)|^{-3\epsilon}2\pax\bar{\theta}d\eta}{\cosh^2((x_\delta-\eta)/2)\cos^2(\bar{\theta})\left(1+\frac{\tan^2(\bar{\theta})\tanh^2\left((x_\delta-\eta)/2\right)}{|\tanh\left((x_\delta-\eta)/2\right)|^{2\epsilon}}\right)^2}\\
\leq\epsilon c\tan^ 2\left(\|f_0\|_{L^ \infty(\RR)}\right)\sec^ 2\left(\|f_0\|_{L^ \infty(\RR)}\right)\pax \feps(x_\delta).
\end{multline*}
\end{proof}

This kind of terms will be absorbed by $\alpha_1$. We have to deal with $\mathcal{I}_1$. We start with the term corresponding to $\Gamma^2_2$ in \eqref{IVeqGamma2}. We write
\begin{equation*}
A_4=\epsilon\text{P.V.}\int_\RR\frac{|\tanh((x_\delta-\eta)/2)|^\epsilon\left(\frac{\pax\feps(x_\delta)}{\cos^2(\theta)}+\frac{-\mu_1(\delta)}{\cosh^2((x_\delta-\eta)/2)}\right)}{2\sinh^2((x_\delta-\eta)/2)\left(1+\frac{\tan^2(\theta)|\tanh\left((x_\delta-\eta)/2\right)|^{2\epsilon}}{\tanh^2\left((x_\delta-\eta)/2\right)}\right)^2}d\eta.
\end{equation*}
\begin{lem}\label{IVlemdf2}
If $\alpha_2>2\sec^2(\|f_0\|_{L^\infty(\RR)})$, we have
$$
A_4\leq c\epsilon\pax f(x_\delta)\left(\sec\left(\|f_0\|_{L^\infty(\RR)}\right)+1\right)+c\epsilon\pax f(x_\delta)\frac{\alpha_2}{2}.
$$
\end{lem}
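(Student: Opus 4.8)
The plan is to expose the cancellation that forces the numerator of $A_4$ to vanish at the singular point $\eta=x_\delta$, to split the integral near and far from it, to dispatch the two regular pieces with the $L^\infty$ bound of Proposition~\ref{IVMPf} and the estimates of Lemma~\ref{IVlemmu}, and to absorb the surviving singular piece into the diffusive operator $\Lambda_l^{1-\epsilon}\pax\feps$, whose reserved half $\epsilon\frac{\alpha_2}{2}\Lambda_l^{1-\epsilon}\pax\feps$ of the corresponding term in \eqref{IVeqreg31} has been kept aside after Lemma~\ref{IVlemdf1}. Abbreviating $z=(x_\delta-\eta)/2$ and using $\tan^2(\theta)=\mu_1^2(\delta)\tanh^2(z)$, $\sec^2(\theta)=1+\tan^2(\theta)$ and $1/\cosh^2(z)=1-\tanh^2(z)$, I would first record the identity
\begin{equation*}
\frac{\pax\feps(x_\delta)}{\cos^2(\theta)}-\frac{\mu_1(\delta)}{\cosh^2(z)}=\big(\pax\feps(x_\delta)-\mu_1(\delta)\big)+\tanh^2(z)\,\mu_1(\delta)\big(\pax\feps(x_\delta)\mu_1(\delta)+1\big),
\end{equation*}
so that, after dividing by $\sinh^2(z)$, the integrand of $A_4$ is the bounded weight $|\tanh(z)|^\epsilon\big(1+\mu_1^2(\delta)|\tanh(z)|^{2\epsilon}\big)^{-2}$ times $\big(\pax\feps(x_\delta)-\mu_1(\delta)\big)\sinh^{-2}(z)+\mu_1(\delta)\big(\pax\feps(x_\delta)\mu_1(\delta)+1\big)\cosh^{-2}(z)$.

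The second summand is immediate: $\cosh^{-2}(z)\le1$, $|\mu_1(\delta)|\le c(1+\|f_0\|_{L^\infty(\RR)}^2)\pax\feps(x_\delta)$ by \eqref{IVmu1} on $B(x_\delta,1)$ and $|\mu_1(\delta)|<1$ outside by \eqref{IVfact2}, and since $\feps(\cdot,\delta)$ still satisfies \eqref{IVH3} the bound $\|\pax\feps(\delta)\|_{L^\infty(\RR)}<1$ absorbs the cubic term; integrability of $\sinh^{-2}(z)\cosh^{-2}(z)$ at $z=0$ and of $\sinh^{-2}(z)$ at infinity gives a contribution $\le c\,\epsilon\,\pax\feps(x_\delta)$. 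The part of the first summand over $\RR\setminus B(x_\delta,1)$ is handled the same way, using in addition $\sinh^2(z)\ge\sinh^2(1/2)$, $|\tanh(z)|^\epsilon\le1$ and $\sec^2(\theta)\le\sec^2(\|f_0\|_{L^\infty(\RR)})$ from Proposition~\ref{IVMPf}; invoking $\alpha_2>2\sec^2(\|f_0\|_{L^\infty(\RR)})$ to convert the excess power of $\sec$ into a factor $\frac{\alpha_2}{2}$, this piece is $\le c\,\epsilon\,\pax\feps(x_\delta)(\sec(\|f_0\|_{L^\infty(\RR)})+1)+c\,\epsilon\,\pax\feps(x_\delta)\frac{\alpha_2}{2}$, as in the statement.

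The genuinely delicate piece is $\epsilon\,\mathrm{P.V.}\int_{B(x_\delta,1)}|\tanh(z)|^\epsilon\big(1+\mu_1^2(\delta)|\tanh(z)|^{2\epsilon}\big)^{-2}\big(\pax\feps(x_\delta)-\mu_1(\delta)\big)(2\sinh^2(z))^{-1}d\eta$: here $\big(\pax\feps(x_\delta)-\mu_1(\delta)\big)\sinh^{-2}(z)$ has no bound uniform in $\epsilon$ near $\eta=x_\delta$, its size being governed by second derivatives of $\feps$. I would split $\pax\feps(x_\delta)-\mu_1(\delta)$ into positive and negative parts, discard the negative one (it contributes $\le0$ to $A_4$), and for the positive part exploit that $x_\delta$ maximizes $\pax\feps$: writing $\theta=\tfrac12\int_0^{x_\delta-\eta}\pax\feps(x_\delta-s)\,ds=\tfrac12\pax\feps(x_\delta)(x_\delta-\eta)-\tfrac12 R$ with $R=\int_0^{x_\delta-\eta}\big(\pax\feps(x_\delta)-\pax\feps(x_\delta-s)\big)ds$ of the same sign as $x_\delta-\eta$, and using $\tan\!\big(\tfrac12\pax\feps(x_\delta)(x_\delta-\eta)\big)/\tanh(z)\ge\pax\feps(x_\delta)$ together with the mean value theorem for $\tan$,
\begin{equation*}
0\le\big(\pax\feps(x_\delta)-\mu_1(\delta)\big)_+\le\frac{c\,\sec^2(\|f_0\|_{L^\infty(\RR)})}{x_\delta-\eta}\int_0^{x_\delta-\eta}\big(\pax\feps(x_\delta)-\pax\feps(x_\delta-s)\big)ds\qquad\text{on }B(x_\delta,1),
\end{equation*}
a nonnegative average. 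Dividing by $\sinh^2(z)$, multiplying by $\epsilon|\tanh(z)|^\epsilon$ and exchanging the order of integration (all integrands nonnegative), the inner $s$-integral picks up the weight obtained by integrating $|\tanh(z)|^\epsilon(x_\delta-\eta)^{-1}\sinh^{-2}(z)$ over $|x_\delta-s|\le|x_\delta-\eta|\le1$, which is $\le c\,|x_\delta-s|^{\epsilon-2}\le c\,|\tanh((x_\delta-s)/2)|^\epsilon\sinh^{-2}((x_\delta-s)/2)$, the last step because $|\tanh w|^\epsilon\ge c\,|w|^\epsilon$ for $|w|\le1/2$ (recall $\epsilon<1/10$). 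Hence this piece is $\le c\,\sec^2(\|f_0\|_{L^\infty(\RR)})\,\epsilon\int_\RR\frac{(\pax\feps(x_\delta)-\pax\feps(x_\delta-s))|\tanh((x_\delta-s)/2)|^\epsilon}{\sinh^2((x_\delta-s)/2)}\,ds=c\,\sec^2(\|f_0\|_{L^\infty(\RR)})\,\epsilon\,\Lambda_l^{1-\epsilon}\pax\feps(x_\delta)$ (the integrand is nonnegative since $x_\delta$ maximizes $\pax\feps$, cf.\ Lemma~\ref{IVops}(4), so restricting to $B(x_\delta,1)$ only decreases it), and by $\alpha_2>2\sec^2(\|f_0\|_{L^\infty(\RR)})$ it is $\le\epsilon\frac{\alpha_2}{2}\Lambda_l^{1-\epsilon}\pax\feps(x_\delta)$: nonnegative at $x_\delta$, this is the reserved half of the diffusive term, and it is what the factor $\frac{\alpha_2}{2}$ in the statement records (it cancels against $-\epsilon\frac{\alpha_2}{2}\Lambda_l^{1-\epsilon}\pax\feps(x_\delta)$ when all estimates for $\pat\pax\feps(x_\delta)$ are summed).

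I expect the main obstacle to be exactly this last step: producing the bound on $\big(\pax\feps(x_\delta)-\mu_1(\delta)\big)_+$ with an $\epsilon$-independent constant and in a shape that, after the Fubini exchange, is dominated by the integrand of $\Lambda_l^{1-\epsilon}\pax\feps$ and not of the full operator $\Lambda_l$ — which forces one to carry the weight $|\tanh|^\epsilon$ through every estimate, since the crude bound $|\mu_1(\delta)-\pax\feps(x_\delta)|\lesssim\pax\feps(x_\delta)$ destroys the $(x_\delta-\eta)^2$ gain and fails to be integrable against $\sinh^{-2}$.
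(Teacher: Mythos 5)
Your argument is correct and reaches the same functional conclusion as the paper (namely that $A_4$, combined with the reserved half $-\epsilon\frac{\alpha_2}{2}\Lambda_l^{1-\epsilon}\pax\feps(x_\delta)$ of the dissipation left over from Lemma \ref{IVlemdf1}, is controlled by $c\epsilon\pax\feps(x_\delta)$ times constants depending on $\|f_0\|_{L^\infty(\RR)}$ and $\alpha_2$), but the route through the singular part is genuinely different. The paper first strips the weights ($\sec^2(\theta)$, $\cosh^{-2}$) off the numerator, then inserts $\pm\frac{2\theta}{\eta}$, and compares the remaining integrand, whose numerator $\pax\feps(x_\delta)-\frac{2\theta}{\eta}\geq 0$, \emph{pointwise} against the $\tanh$-form of $\Lambda_l^{1-\epsilon}\pax\feps$ from Lemma \ref{IVops}: since the prefactor is at most $\tfrac12<\tfrac{\alpha_2}{2}$, only $\alpha_2>2$ is needed there, and the substitution error $\frac{2\theta}{\eta}-\frac{2\theta}{\tanh\eta}$ produces exactly the $c\epsilon\pax\feps(x_\delta)\frac{\alpha_2}{2}$ term of the statement. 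You instead use the exact identity isolating $\pax\feps(x_\delta)-\mu_1(\delta)$, discard its negative part, bound the positive part by $\frac{c\sec^2}{x_\delta-\eta}\int_0^{x_\delta-\eta}(\pax\feps(x_\delta)-\pax\feps(x_\delta-s))ds$ via $\tan(az)/\tanh(z)\geq a$ and the mean value theorem, and Fubini the result into $c\sec^2(\|f_0\|_{L^\infty(\RR)})\,\epsilon\,\Lambda_l^{1-\epsilon}\pax\feps(x_\delta)$; this is tidier (one comparison instead of two successive substitutions, and the $|\tanh|^\epsilon$ weight is carried correctly so the majorant is the $(1-\epsilon)$-operator and not $\Lambda_l$), at the price that the dissipation is now produced with a multiplicative constant in front.

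That constant is the one weak point of your write-up: the step ``by $\alpha_2>2\sec^2(\|f_0\|_{L^\infty(\RR)})$ it is $\leq\epsilon\frac{\alpha_2}{2}\Lambda_l^{1-\epsilon}\pax\feps(x_\delta)$'' is only valid if your unspecified $c$ satisfies $c\leq1$, which you never check; a careful computation (the $\tfrac12$ in the integrand of $A_4$, $\tanh(w)\geq 2\tanh(1/2)w$ and $\sinh(w)\leq w\cosh(1/2)$ on $|w|\leq1/2$, $\sec^2(\xi)\leq\sec^2(1/2)\sec^2(\|f_0\|_{L^\infty(\RR)})$) shows it is of order one, so the structure survives, but as written the hypothesis of the lemma is used in a way it does not literally support, and one should either verify $c\leq1$ or enlarge $\alpha_2$ accordingly (harmless, since $\alpha_2$ is a free parameter fixed only in Step 1 of Proposition \ref{IVMPdf}, though the paper's eventual choice $\alpha_2=3(1+\sec^2(\|f_0\|_{L^\infty(\RR)}))$ then needs rechecking against your $c$). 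Two cosmetic slips: ``integrability of $\sinh^{-2}(z)\cosh^{-2}(z)$ at $z=0$'' should read that the second summand carries $\cosh^{-2}(z)$, which is bounded at $z=0$ and integrable at infinity; and for the outer region the factor $\pax\feps(x_\delta)$ comes from \eqref{IVH4} at time $\delta$ (i.e.\ $|\mu_1(\delta)|<\pax\feps(x_\delta)$, the paper's bound following \eqref{IVfact2}), not from $|\mu_1(\delta)|<1$ alone. Finally, note that your reading of the lemma --- that the $\frac{\alpha_2}{2}$ term records the use of the reserved half of the dissipation rather than a literal bound on $A_4$ alone --- is exactly how the paper's own proof operates, so there is no discrepancy on that score.
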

\begin{proof}
We split 
\begin{multline*}
A_4=\epsilon\text{P.V.}\int_\RR\frac{|\tanh((x_\delta-\eta)/2)|^\epsilon\left(\frac{\pax\feps(x_\delta)}{\cos^2(\theta)}-\pax\feps(x_\delta)+\frac{-\mu_1(\delta)}{\cosh^2((x_\delta-\eta)/2)}+\mu_1(t)\right)}{2\sinh^2((x_\delta-\eta)/2)\left(1+\frac{\tan^2(\theta)|\tanh\left((x_\delta-\eta)/2\right)|^{2\epsilon}}{\tanh^2\left((x_\delta-\eta)/2\right)}\right)^2}d\eta\\
+\epsilon\text{P.V.}\int_\RR\frac{|\tanh((x_\delta-\eta)/2)|^\epsilon\left(\pax\feps(x_\delta)-\mu_1(\delta)\right)}{2\sinh^2((x_\delta-\eta)/2)\left(1+\frac{\tan^2(\theta)|\tanh\left((x_\delta-\eta)/2\right)|^{2\epsilon}}{\tanh^2\left((x_\delta-\eta)/2\right)}\right)^2}d\eta=B_1+B_2.
\end{multline*}
Since $0<\delta<<1$ is small enough to ensure that the hypotheses \eqref{IVH3},  \eqref{IVH4} and \eqref{IVH5} hold at time $\delta$, we have that, if $|\eta|>1$,
\begin{equation}\label{IVpropmu1delta}
|\mu_1(\delta)|\leq\frac{\tan(\|\feps(\delta)\|_{L^\infty(\RR)})}{\tanh(1/2)}<\pax\feps(x_\delta),
\end{equation}
The term $B_1$ is not singular and can be bounded using \eqref{IVmu1} and \eqref{IVpropmu1delta}:
\begin{multline*}
|B_1|\leq \epsilon\text{P.V.}\left(\int_{B(0,1)}+\int_{B^ c(0,1)}\right)\frac{\pax\feps(x_\delta)\tan^2(\theta)+|\mu_1(\delta)|\tanh^2(\eta/2)}{2\sinh^2(\eta/2)}d\eta\\
\leq c\epsilon\pax f(x_\delta)\left(\sec\left(\|f_0\|_{L^\infty(\RR)}\right)+1\right).
\end{multline*}
We compute
$$
B_2=\epsilon\text{P.V.}\int_{\RR}\frac{|\tanh(\eta/2)|^\epsilon\left(\pax\feps(x_\delta)-\frac{\tan(\theta)-\theta}{\tanh(\eta/2)}-\frac{\theta}{\tanh(\eta/2)}+\frac{2\theta}{\eta}-\frac{2\theta}{\eta}\right)}{2\sinh^2(\eta/2)\left(1+\frac{\tan^2(\theta)|\tanh\left(\eta/2\right)|^{2\epsilon}}{\tanh^2\left(\eta/2\right)}\right)^2}d\eta=C_1+C_2,
$$
with 
$$
C_1=\epsilon\text{P.V.}\left(\int_{B(0,1)}+\int_{B^c(0,1)}\right)\frac{|\tanh(\eta/2)|^\epsilon\left(-\frac{\tan(\theta)-\theta}{\tanh(\eta/2)}-\frac{\theta}{\tanh(\eta/2)}+\frac{2\theta}{\eta}\right)}{2\sinh^2(\eta/2)\left(1+\frac{\tan^2(\theta)|\tanh\left(\eta/2\right)|^{2\epsilon}}{\tanh^2\left(\eta/2\right)}\right)^2}d\eta=D_1+D_2,
$$
and
$$
C_2=\epsilon\text{P.V.}\int_{\RR}\frac{|\tanh(\eta/2)|^\epsilon\left(\pax\feps(x_\delta)-\frac{2\theta}{\eta}\right)}{2\sinh^2(\eta/2)\left(1+\frac{\tan^2(\theta)|\tanh\left(\eta/2\right)|^{2\epsilon}}{\tanh^2\left(\eta/2\right)}\right)^2}d\eta.
$$
Using the Mean Value Theorem, we bound the inner term $D_1$ as
$$
|D_1|\leq c\epsilon\pax \feps(x_\delta).
$$
Due to \eqref{IVpropmu1delta}, the outer term is
$$
|D_2|\leq\epsilon\text{P.V.}\int_{B^c(0,1)}\frac{|\mu_1(\delta)|+\pax\feps(x_\delta)}{2\sinh^2(\eta/2)}d\eta\leq \epsilon c\pax\feps(x_\delta).
$$
Putting all together, we obtain 
\begin{equation*}
|C_1|\leq \epsilon c\pax\feps(x_\delta).
\end{equation*}
Then, using the diffusion given by $\Lambda_l^{1-\epsilon}$ to control $C_2$, we get
\begin{multline*}
C_2-\epsilon\frac{\alpha_2}{2}\Lambda_l^{1-\epsilon}\pax\feps(x_\delta)=\epsilon\text{P.V.}\int_{\RR}\frac{|\tanh(\eta/2)|^\epsilon\left(\pax\feps(x_\delta)-\frac{2\theta}{\eta}\right)}{2\sinh^2(\eta/2)\left(1+\frac{\tan^2(\theta)|\tanh\left(\eta/2\right)|^{2\epsilon}}{\tanh^2\left(\eta/2\right)}\right)^2}d\eta\\
-\epsilon\frac{\alpha_2}{2}\left((1-\epsilon)\text{PV}\int_\RR\frac{\left(\pax \feps(x_\delta)-\frac{\feps(x_\delta)-\feps(x_\delta-\eta)}{\sinh(\eta)}\right)|\tanh(\eta/2)|^\epsilon}{\sinh^2\left(\frac{\eta}{2}\right)}d\eta\right.\\
\left.+\text{P.V.}\int_\RR\frac{\left(\pax \feps(x_\delta)-\frac{\feps(x_\delta)-\feps(x_\delta-\eta)}{\tanh(\eta)}\right)|\tanh(\eta/2)|^\epsilon}{\sinh^2\left(\frac{\eta}{2}\right)}d\eta+4\pax \feps(x_\delta)\right).
\end{multline*}
Due to $|\eta/\sinh(\eta)|<1$ and $0<\epsilon<1/10$, some terms have the appropriate sign:
$$
\epsilon\frac{\alpha_2}{2}\left((1-\epsilon)\text{PV}\int_\RR\frac{\left(\pax \feps(x_\delta)-\frac{\feps(x_\delta)-\feps(x_\delta-\eta)}{\sinh(\eta)}\right)|\tanh(\eta/2)|^\epsilon}{\sinh^2\left(\frac{\eta}{2}\right)}d\eta+4\pax \feps(x_\delta)\right)\geq0,
$$
thus we can neglect their contribution. Furthermore, we have
\begin{eqnarray*}
C_2-\epsilon\frac{\alpha_2}{2}\Lambda_l^{1-\epsilon}\pax\feps(x_\delta)&<&\epsilon\text{P.V.}\int_{\RR}\frac{|\tanh(\eta/2)|^\epsilon\left(\pax\feps(x_\delta)-\frac{2\theta}{\eta}\right)}{2\sinh^2(\eta/2)\left(1+\frac{\tan^2(\theta)|\tanh\left(\eta/2\right)|^{2\epsilon}}{\tanh^2\left(\eta/2\right)}\right)^2}d\eta\\
&&-\epsilon\frac{\alpha_2}{2}\text{PV}\int_\RR\frac{\left(\pax \feps(x_\delta)-\frac{2\theta}{\eta}+\frac{2\theta}{\eta}-\frac{2\theta}{\tanh(\eta)}\right)|\tanh(\eta/2)|^\epsilon}{\sinh^2\left(\frac{\eta}{2}\right)}d\eta\\
&\leq & \epsilon\text{P.V.}\int_{\RR}\frac{|\tanh(\eta/2)^\epsilon\left(\pax\feps(x_\delta)-\frac{2\theta}{\eta}\right)}{\sinh^2(\eta/2)}\\
&&\cdot\left(\frac{1}{2\left(1+\frac{\tan^2(\theta)|\tanh\left(\eta/2\right)|^{2\epsilon}}{\tanh^2\left(\eta/2\right)}\right)^2}-\frac{\alpha_2}{2}\right)d\eta\\
&&-\epsilon\frac{\alpha_2}{2}\text{PV}\int_\RR\frac{\left(\frac{2\theta}{\eta}-\frac{2\theta}{\tanh(\eta)}\right)|\tanh(\eta/2)|^\epsilon}{\sinh^2\left(\frac{\eta}{2}\right)}d\eta.
\end{eqnarray*}
Taking $\alpha_2/2>1$ and using the Mean Value Theorem, we get
\begin{equation*}
C_2-\epsilon\frac{\alpha_2}{2}\Lambda_l^{1-\epsilon}\pax\feps(x_\delta)<\epsilon\frac{\alpha_2}{2}\pax \feps(x_\delta)c.
\end{equation*}
Combining these terms we conclude this result.
\end{proof}

The term corresponding to $\Gamma^1_2$ in \eqref{IVeqGamma2} is
\begin{equation*}
A_5=\epsilon\text{P.V.}\int_\RR\frac{|\tanh(\eta/2)|^{3\epsilon}\mu_1^2(\delta)\left(\frac{\pax\feps(x_\delta)}{\cos^2(\theta)}+\frac{-\mu_1(\delta)}{\cosh^2(\eta/2)}\right)}{2\sinh^2(\eta/2)\left(1+\frac{\tan^2(\theta)|\tanh\left(\eta/2\right)|^{2\epsilon}}{\tanh^2\left(\eta/2\right)}\right)^2}d\eta.
\end{equation*}
\begin{lem}\label{IVlemdf3}
If $\alpha_3>1$, we have
$$
A_5\leq c\epsilon\pax f(x_\delta)\left(\sec\left(\|f_0\|_{L^\infty(\RR)}\right)+1\right)+c\epsilon\pax f(x_\delta)\alpha_3.
$$
\end{lem}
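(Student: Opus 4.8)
The plan is to treat $A_5$ in complete parallel with the term $A_4$ of Lemma \ref{IVlemdf2}; the only structural difference is that the weight is now $|\tanh(\eta/2)|^{3\epsilon}$ rather than $|\tanh(\eta/2)|^\epsilon$ (it is the factor $|\tanh|^{3\epsilon}\mu_1^2$ carried by $\Gamma_2^1$ in \eqref{IVeqGamma2}), and this is precisely why the diffusion available to absorb the remaining singular contribution is $\Lambda_l^{1-3\epsilon}$ and not $\Lambda_l^{1-\epsilon}$, the scale $1-3\epsilon$ being the one generated by the cubic nonlinearity as explained after \eqref{IVeqreg}. First I would record the two consequences of Proposition \ref{IVMPf} and Lemma \ref{IVlemmu} on which the argument rests: the uniform bound $\mu_1^2(\delta)\le c$ with $c$ depending only on $\|f_0\|_{L^\infty(\RR)}$ (via \eqref{IVfact2}, \eqref{IVmu1} and the a priori slope bound), and the sharper fact
\[
\sup_{\eta}\ \frac{\mu_1^2(\delta)}{\left(1+\frac{\tan^2(\theta)\,|\tanh((x_\delta-\eta)/2)|^{2\epsilon}}{\tanh^2((x_\delta-\eta)/2)}\right)^{2}}\ \le\ \left(\pax\feps(x_\delta)\right)^{2}\ \le\ 1 ,
\]
the supremum being a limit as $\eta\to x_\delta$ (there the denominator tends to $1$ and $\mu_1(\delta)\to\pax\feps(x_\delta)$), while for $|x_\delta-\eta|\le 1$ away from $x_\delta$ the smallness of $|\tanh((x_\delta-\eta)/2)|^{2\epsilon}$ beats the correction in \eqref{IVfact4}. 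This is exactly why the threshold in the statement is $\alpha_3>1$.

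Next I would expand the bracket exactly as in Lemma \ref{IVlemdf2},
\[
\frac{\pax\feps(x_\delta)}{\cos^2(\theta)}+\frac{-\mu_1(\delta)}{\cosh^2(\eta/2)}=\bigl(\pax\feps(x_\delta)-\mu_1(\delta)\bigr)+\pax\feps(x_\delta)\tan^2(\theta)+\mu_1(\delta)\tanh^2\!\left(\frac{\eta}{2}\right).
\]
The two last pieces kill the $\sinh^{-2}(\eta/2)$ singularity at $\eta=0$ (both $\tan^2(\theta)$ and $\tanh^2(\eta/2)$ are $O(\eta^2)$ there, by Proposition \ref{IVMPf} and Taylor) and, using $|\mu_1(\delta)|\le c(\|f_0\|_{L^\infty(\RR)}^{2}+1)\pax\feps(x_\delta)$, $\mu_1^2(\delta)\le c$ and $\pax\feps(x_\delta)^{k}\le\pax\feps(x_\delta)$ for $k\ge1$, they contribute to $A_5$ at most $c\epsilon\pax\feps(x_\delta)\bigl(\sec(\|f_0\|_{L^\infty(\RR)})+1\bigr)$. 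For the piece carrying $\mu_1^2(\delta)\bigl(\pax\feps(x_\delta)-\mu_1(\delta)\bigr)$ I would reproduce the decomposition of $B_2$ in Lemma \ref{IVlemdf2}: write $\mu_1(\delta)=\frac{\tan(\theta)-\theta}{\tanh(\eta/2)}+\frac{\theta}{\tanh(\eta/2)}$ and insert $\pm\frac{2\theta}{\eta}$, obtaining $\pax\feps(x_\delta)-\mu_1(\delta)=\bigl(\pax\feps(x_\delta)-\tfrac{2\theta}{\eta}\bigr)+\bigl(\tfrac{2\theta}{\eta}-\tfrac{\theta}{\tanh(\eta/2)}\bigr)-\tfrac{\tan(\theta)-\theta}{\tanh(\eta/2)}$; since $\frac{2}{\eta}-\coth(\eta/2)=O(\eta)$ and $\tan(\theta)-\theta=O(\theta^{3})$, the last two terms are non-singular and, by the same reductions, also contribute $\le c\epsilon\pax\feps(x_\delta)$.

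What remains is the genuinely singular term
\[
\epsilon\,\text{P.V.}\!\int_\RR\frac{|\tanh(\eta/2)|^{3\epsilon}\,\mu_1^2(\delta)\bigl(\pax\feps(x_\delta)-\frac{2\theta}{\eta}\bigr)}{2\sinh^2(\eta/2)\left(1+\frac{\tan^2(\theta)|\tanh(\eta/2)|^{2\epsilon}}{\tanh^2(\eta/2)}\right)^{2}}\,d\eta .
\]
At the maximum $x_\delta$ of $\pax\feps$ the mean value theorem gives $\frac{2\theta}{\eta}=\frac{\feps(x_\delta)-\feps(x_\delta-\eta)}{\eta}=\pax\feps(\xi)\le\pax\feps(x_\delta)$, so the integrand is nonnegative; bounding the coefficient by the supremum above, this term is at most $\frac{1}{2}\epsilon\,\text{P.V.}\int_\RR\frac{|\tanh(\eta/2)|^{3\epsilon}(\pax\feps(x_\delta)-\frac{2\theta}{\eta})}{\sinh^2(\eta/2)}\,d\eta$. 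Now I would trade $\frac{2\theta}{\eta}$ for $\frac{2\theta}{\tanh(\eta)}$ and $\frac{2\theta}{\sinh(\eta)}$ (the discrepancies $2\theta(\coth(\eta)-\frac{1}{\eta})$ and $2\theta(\frac{1}{\sinh(\eta)}-\frac{1}{\eta})$ are $O(\eta)\,\theta$, hence non-singular and $\le c\pax\feps(x_\delta)$) and invoke Lemma \ref{IVops}(4) with $3\epsilon$ in place of $\epsilon$ to recognise the resulting integral, modulo the harmless term $4\pax\feps(x_\delta)$, as $\Lambda_l^{1-3\epsilon}\pax\feps(x_\delta)$. Multiplying by $\alpha_3$ this yields, for $\alpha_3>1$, $A_5\le\alpha_3\epsilon\,\Lambda_l^{1-3\epsilon}\pax\feps(x_\delta)+c\epsilon\pax\feps(x_\delta)\bigl(\sec(\|f_0\|_{L^\infty(\RR)})+1\bigr)+c\epsilon\pax\feps(x_\delta)\alpha_3$, and the first term is cancelled by the diffusion $-\epsilon\alpha_3\Lambda_l^{1-3\epsilon}\pax\feps(x_\delta)$ present in \eqref{IVeqreg31}, which is $\le0$ at a maximum of $\pax\feps$ by Lemma \ref{IVops}; the remaining two terms give the stated bound.

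The main obstacle is this last reduction. One has to make sure that after replacing $\frac{2\theta}{\eta}$ by $\frac{2\theta}{\tanh(\eta)}$ and $\frac{2\theta}{\sinh(\eta)}$ the object being absorbed is genuinely a positive multiple of $\Lambda_l^{1-3\epsilon}\pax\feps(x_\delta)$ and not merely of one of the two individually unsigned principal-value integrals in its split representation — here both forms of $\Lambda_l^{1-3\epsilon}\pax\feps$ in Lemma \ref{IVops}(4) and the nonnegativity of $\Lambda_l^{1-3\epsilon}\pax\feps$ at a maximum of $\pax\feps$ have to be used together — and one has to verify that the coefficient $\mu_1^2(\delta)(1+\dots)^{-2}$ really stays below $\alpha_3$ once $\alpha_3>1$, which is the one point where the interplay of the amplitude bound of Proposition \ref{IVMPf} with the estimates \eqref{IVfact2}, \eqref{IVmu1}, \eqref{IVfact4} of Lemma \ref{IVlemmu} is essential.
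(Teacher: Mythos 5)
Your proposal is correct and follows essentially the same route as the paper, which proves this lemma simply by repeating the argument of Lemma \ref{IVlemdf2} with the weight $|\tanh(\eta/2)|^{3\epsilon}$ and the diffusion $\epsilon\alpha_3\Lambda_l^{1-3\epsilon}$ in place of $|\tanh(\eta/2)|^{\epsilon}$ and $\epsilon\tfrac{\alpha_2}{2}\Lambda_l^{1-\epsilon}$. The only caveat is that your claimed pointwise bound of $\mu_1^2(\delta)\bigl(1+\mu_1^2(\delta)|\tanh(\eta/2)|^{2\epsilon}\bigr)^{-2}$ by $(\pax \feps(x_\delta))^2\leq 1$ is slightly too strong as stated (for $|\eta|\leq 1$ one only gets $\mu_1^2(\delta)\leq(\pax \feps(x_\delta))^2+c\,\eta^2$ from \eqref{IVfact3}, \eqref{IVfact4} and the splitting \eqref{IVspliteqcan2}), but the $O(\eta^2)$ excess kills the $\sinh^{-2}(\eta/2)$ singularity and only adds another $c\epsilon\pax \feps(x_\delta)$ term, so the absorption by $\alpha_3>1$ and the stated bound go through exactly as you describe.
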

\begin{proof}
The proof follows the same ideas as in Lemma \ref{IVlemdf2}.
\end{proof}

We are done with $\Gamma^1_2$, thus, using the previous bound for $\Gamma_2^2$, we are done with $\Gamma_2$ in \eqref{IVeqGamma2}. The terms in $\Gamma_1$ are not multiplied by $\epsilon$ and we have to obtain this decay from the integral. We write
$$
A_6=\text{P.V.}\int_{\RR}\frac{\left(|\tanh(\eta/2)|^\epsilon-1\right)\Gamma_1^2}{\left(1+\frac{\tan^2(\theta)|\tanh\left(\eta/2\right)|^{2\epsilon}}{\tanh^2\left(\eta/2\right)}\right)^2}.
$$
\begin{lem}\label{IVlemdf4}
We have
$$
A_6\leq c\epsilon\pax f(x_\delta)\left(\sec^2\left(\|f_0\|_{L^\infty(\RR)}\right)+1\right).
$$
\end{lem}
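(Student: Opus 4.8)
The plan is to exploit the fact that $A_6$ carries a factor $|\tanh(\eta/2)|^\epsilon-1$, which by \eqref{IVfact1} is pointwise bounded (for $|\eta|\geq\delta$) by $\epsilon|\log|\tanh(\eta/2)||$, a function that is integrable at infinity by \eqref{IVfact1.b} and only logarithmically singular at the origin. Thus the whole term will be $O(\epsilon)$ provided the remaining factor $\Gamma_1^2/(1+\tan^2(\theta)|\tanh|^{2\epsilon}/\tanh^2)^2$ is integrable against $|\log|\tanh(\eta/2)||$ near $\eta=0$ and decays at infinity. First I would split the integral into the inner region $B(0,1)$ and the outer region $B^c(0,1)$, as in Lemmas \ref{IVlemdf2} and \ref{IVlemdf3}.

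For the outer region, I would use that $|\mu_1(\delta)|\leq\tan(\|f_0\|_{L^\infty})/\tanh(1/2)<\pax\feps(x_\delta)$ by \eqref{IVfact2}, that $\sec^2(\theta)\leq\sec^2(\|f_0\|_{L^\infty})$ by Proposition \ref{IVMPf}, and that $1/\sinh^2(\eta/2)$, $1/\cosh^2(\eta/2)$ decay exponentially; the three summands in $\Gamma_1^2$ — namely $(\mu_1-\pax\feps)/\sinh^2$, $-\pax\feps\,\mu_1^2/\cosh^2$, and $(\pax\feps)^2\mu_1/\cos^2(\theta)$ — are then each bounded by $c\pax\feps(x_\delta)(\sec^2(\|f_0\|_{L^\infty})+1)$ times an exponentially decaying weight, and multiplying by $\epsilon|\log|\tanh(\eta/2)||$ and integrating over $|\eta|>1$ gives the claimed bound with the $\epsilon$ in front. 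For the inner region, the delicate point is the term $(\mu_1-\pax\feps(x_\delta))/\sinh^2((x_\delta-\eta)/2)$: here I would invoke \eqref{IVfact3}, which says $\mu_1(\delta)-\pax\feps(x_\delta)\leq c(x_\delta-\eta)^2(|\pax\feps(x_\delta)|+5|\pax\feps(x_\delta)|^3)$, so dividing by $\sinh^2((x_\delta-\eta)/2)\sim(x_\delta-\eta)^2/4$ leaves a bounded quantity of size $c\pax\feps(x_\delta)(1+\pax\feps(x_\delta)^2)\leq c\pax\feps(x_\delta)$ (recall $\|\pax\feps\|_{L^\infty}\leq 1$ is being propagated). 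The other two summands of $\Gamma_1^2$ are already non-singular on $B(0,1)$ and bounded by $c\pax\feps(x_\delta)\sec^2(\|f_0\|_{L^\infty})$. Multiplying by the integrable weight $\epsilon|\log|\tanh(\eta/2)||$ on $B(0,1)$ and integrating yields the inner contribution, again with the gain of $\epsilon$.

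The main obstacle is the near-diagonal behaviour of the first summand of $\Gamma_1^2$: a naive bound $|\mu_1-\pax\feps|\leq 2\|\pax\feps\|_{L^\infty}$ over $1/\sinh^2((x_\delta-\eta)/2)$ is \emph{not} integrable at $\eta=x_\delta$, so one genuinely needs the quadratic vanishing from \eqref{IVfact3} (which itself uses that $x_\delta$ is the maximum point of $\pax\feps$, giving the one-sided Taylor estimate with the correct sign). One must also check that the denominator $(1+\tan^2(\theta)|\tanh((x_\delta-\eta)/2)|^{2\epsilon}/\tanh^2((x_\delta-\eta)/2))^{2}$ only helps (it is $\geq 1$ away from where $\tanh\to 0$, and where $\tanh((x_\delta-\eta)/2)\to 0$ it in fact suppresses the would-be singularity further), so it can simply be bounded below by $1$ in the estimates above; no cancellation from it is required. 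Collecting the inner and outer bounds gives $A_6\leq c\epsilon\,\pax\feps(x_\delta)(\sec^2(\|f_0\|_{L^\infty(\RR)})+1)$, as stated, and since $\pax\feps(x_\delta)=\|\pax\feps(\delta)\|_{L^\infty(\RR)}=\pax f(x_\delta)$ in the paper's notation, this completes the proof.
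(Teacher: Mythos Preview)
Your outer-region argument and your treatment of the two bounded summands of $\Gamma_1^2$ are essentially the paper's, but the handling of the singular summand $(\mu_1-\pax\feps(x_\delta))/\sinh^2((x_\delta-\eta)/2)$ has a genuine gap. The estimate \eqref{IVfact3} is \emph{one-sided}: it gives only $\mu_1-\pax\feps(x_\delta)\le c\,\eta^2$, not $|\mu_1-\pax\feps(x_\delta)|\le c\,\eta^2$. Since $|\tanh(\eta/2)|^\epsilon-1\le 0$, the positive contribution to $A_6$ from this summand comes precisely from the region where $\mu_1-\pax\feps(x_\delta)<0$, and there you need a \emph{lower} bound of quadratic type. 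Writing $\mu_1-\pax\feps(x_\delta)=\big[\tfrac{\tan\theta-\theta}{\tanh(\eta/2)}+\theta(\tfrac{1}{\tanh(\eta/2)}-\tfrac{2}{\eta})\big]+\big[\tfrac{2\theta}{\eta}-\pax\feps(x_\delta)\big]$, the first bracket is $O(\eta^2)$ two-sidedly, but the second equals $\pax\feps(\xi)-\pax\feps(x_\delta)$ for some $\xi$ and is only known to be $\le 0$; a quadratic lower bound would require control of $\pax^3\feps$, which is not uniform in $\epsilon$. With only $|\mu_1-\pax\feps|\le c$, the inner integral behaves like $\int_\epsilon^1 \epsilon|\log\eta|\,\eta^{-2}\,d\eta$, which is $O(1)$, not $O(\epsilon)$.

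The paper's remedy is exactly the ingredient you omit: it splits off the piece $C_6=\text{P.V.}\int_\RR (1-|\tanh(\eta/2)|^\epsilon)(\pax\feps(x_\delta)-2\theta/\eta)\,\sinh^{-2}(\eta/2)\,(1+\cdots)^{-2}\,d\eta$ and absorbs it with the linear term $(\Lambda_l-\Lambda_l^{1-\epsilon})\pax\feps(x_\delta)$ built into the regularized equation \eqref{IVeqreg}. After subtracting, the dangerous part carries the factor $\big((1+\cdots)^{-2}-1\big)\le 0$ multiplied by $(1-|\tanh|^\epsilon)\ge 0$ and $(\pax\feps(x_\delta)-2\theta/\eta)\ge 0$, hence has the right sign and can be dropped; the residual terms are genuinely $O(\epsilon)$. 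So the diffusive operator $(\Lambda_l-\Lambda_l^{1-\epsilon})$ is not decorative here --- it is precisely what makes the singular term controllable without a uniform two-sided quadratic bound on $\mu_1-\pax\feps(x_\delta)$.
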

\begin{proof}
We have
$$
A_6=B_5+B_6+B_7,
$$
with
$$
B_5=\text{P.V.}\left(\int_{B(0,\epsilon)}+\int_{B^c(0,\epsilon)\cap B(0,1)}+\int_{B^c(0,1)}\right)\frac{\left(|\tanh(\eta/2)|^\epsilon-1\right)\left(-\pax\feps(x_\delta)\mu_1^2(\delta)\right)}{\cosh^2(\eta/2)\left(1+\frac{\tan^2(\theta)|\tanh\left(\eta/2\right)|^{2\epsilon}}{\tanh^2\left(\eta/2\right)}\right)^2}d\eta,
$$
$$
B_6=\text{P.V.}\left(\int_{B(0,\epsilon)}+\int_{B^c(0,\epsilon)\cap B(0,1)}+\int_{B^c(0,1)}\right)\frac{\left(|\tanh(\eta/2)|^\epsilon-1\right)(\pax \feps(x_\delta))^2\mu_1(\delta)}{\cos^2(\theta)\left(1+\frac{\tan^2(\theta)|\tanh\left(\eta/2\right)|^{2\epsilon}}{\tanh^2\left(\eta/2\right)}\right)^2}d\eta,
$$
and
$$
B_7=\text{P.V.}\int_{\RR}\frac{\left(|\tanh(\eta/2)|^\epsilon-1\right)\left(\mu_1(\delta)-\pax\feps(x_\delta)\right)}{\sinh^2(\eta/2)\left(1+\frac{\tan^2(\theta)|\tanh\left(\eta/2\right)|^{2\epsilon}}{\tanh^2\left(\eta/2\right)}\right)^2}d\eta.
$$
The term $B_5$ is not singular and can be bounded using \eqref{IVfact1} and \eqref{IVfact1.b} as follows:
\begin{multline*}
|B_5|\leq 4\epsilon\pax\feps(x_\delta)+\epsilon\int_{B(0,1)}|\log\left(|\tanh(\eta/2)|\right)|\pax\feps(x_\delta)d\eta\\
+\epsilon\int_{B^c(0,1)}\frac{|\log\left(|\tanh(\eta/2)|\right)|\pax\feps(x_\delta)}{\cosh^2(\eta/2)}d\eta\leq c\epsilon\pax\feps(x_\delta).
\end{multline*}
We can bound $B_6$ in the same way,
\begin{multline*}
|B_6|\leq 4\epsilon\sec^2\left(\|f_0\|_{L^\infty(\RR)}\right)\pax\feps(x_\delta)\\
+\epsilon \sec^2\left(\|f_0\|_{L^\infty(\RR)}\right)\int_{\RR}|\log\left(|\tanh(\eta/2)|\right)|\pax\feps(x_\delta)d\eta\\
\leq c\epsilon\sec^2\left(\|f_0\|_{L^\infty(\RR)}\right)\pax\feps(x_\delta).
\end{multline*}
We split the term $B_7$ as follows
$$
B_7=\text{P.V.}\int_{\RR}\frac{\left(|\tanh(\eta/2)|^\epsilon-1\right)\left(\frac{\tan(\theta)-\theta}{\tanh(\eta/2)}+\frac{\theta}{\tanh(\eta/2)}-\frac{2\theta}{\eta}+\frac{2\theta}{\eta}-\pax\feps(x_\delta)\right)}{\sinh^2(\eta/2)\left(1+\frac{\tan^2(\theta)|\tanh\left(\eta/2\right)|^{2\epsilon}}{\tanh^2\left(\eta/2\right)}\right)^2}d\eta=C_5+C_6,
$$
where
\begin{multline*}
C_5=\text{P.V.}\left(\int_{B(0,\epsilon)}+\int_{B^c(0,\epsilon)}\right)\frac{\left(|\tanh(\eta/2)|^\epsilon-1\right)\left(\frac{\tan(\theta)-\theta}{\tanh(\eta/2)}+\frac{\theta}{\tanh(\eta/2)}-\frac{2\theta}{\eta}\right)}{\sinh^2(\eta/2)\left(1+\frac{\tan^2(\theta)|\tanh\left(\eta/2\right)|^{2\epsilon}}{\tanh^2\left(\eta/2\right)}\right)^2}d\eta\\
\leq c\epsilon\pax\feps(x_\delta),
\end{multline*}
and
$$
C_6=\text{P.V.}\int_{\RR}\frac{\left(1-|\tanh(\eta/2)|^\epsilon\right)\left(\pax\feps(x_\delta)-\frac{2\theta}{\eta}\right)}{\sinh^2(\eta/2)\left(1+\frac{\tan^2(\theta)|\tanh\left(\eta/2\right)|^{2\epsilon}}{\tanh^2\left(\eta/2\right)}\right)^2}d\eta.
$$
To bound $C_6$ we need to use the diffusion coming from $\Lambda_l-\Lambda_l^{1-\epsilon}$. Notice that, according to Lemma \ref{IVops}, we have
\begin{multline*}
\left(\Lambda_l-\Lambda_l^{1-\epsilon}\right)\pax \phi(x)=(1-\epsilon)\text{PV}\int_\RR\frac{\left(\pax \phi(x)-\frac{\phi(x)-\phi(\eta)}{\sinh(x-\eta)}\right)\left(1-|\tanh((x-\eta)/2)|^\epsilon\right)}{\sinh^2\left(\frac{x-\eta}{2}\right)}d\eta\\
+\epsilon\text{PV}\int_\RR\frac{\left(\pax \phi(x)-\frac{\phi(x)-\phi(\eta)}{\sinh(x-\eta)}\right)}{\sinh^2\left(\frac{x-\eta}{2}\right)}d\eta\\
+\text{PV}\int_\RR\frac{\left(\pax \phi(x)-\frac{\phi(x)-\phi(\eta)}{\tanh(x-\eta)}\right)\left(1-|\tanh((x-\eta)/2)|^\epsilon\right)}{\sinh^2\left(\frac{x-\eta}{2}\right)}d\eta,
\end{multline*}
and, when evaluating in the point where $\pax \phi(x)$ reaches its maximum, the first two terms are positive and they can be neglected. We get
\begin{multline*}
C_6-\left(\Lambda_l-\Lambda_l^{1-\epsilon}\right)\pax \feps(x_\delta)<\text{P.V.}\int_{\RR}\frac{\left(1-|\tanh(\eta/2)|^\epsilon\right)\left(\pax\feps(x_\delta)-\frac{2\theta}{\eta}\right)}{\sinh^2(\eta/2)\left(1+\frac{\tan^2(\theta)|\tanh\left(\eta/2\right)|^{2\epsilon}}{\tanh^2\left(\eta/2\right)}\right)^2}\\
-\text{PV}\int_\RR\frac{\left(\pax \feps(x_\delta)-\frac{2\theta}{\eta}+\frac{2\theta}{\eta}-\frac{\feps(x_\delta)-\feps(\eta)}{\tanh(\eta)}\right)\left(1-|\tanh(\eta/2)|^\epsilon\right)}{\sinh^2\left(\frac{\eta}{2}\right)}d\eta\\
\leq \text{PV}\int_\RR\frac{\left(\pax \feps(x_\delta)-\frac{2\theta}{\eta}\right)\left(1-|\tanh(\eta/2)|^\epsilon\right)}{\sinh^2\left(\frac{\eta}{2}\right)}\left(\frac{1}{\left(1+\frac{\tan^2(\theta)|\tanh\left(\eta/2\right)|^{2\epsilon}}{\tanh^2\left(\eta/2\right)}\right)^2}-1\right)d\eta\\
-\text{PV}\int_\RR\frac{\left(\frac{2\theta}{\eta}-\frac{\feps(x_\delta)-\feps(\eta)}{\tanh(\eta)}\right)\left(1-|\tanh(\eta/2)|^\epsilon\right)}{\sinh^2\left(\frac{\eta}{2}\right)}d\eta\leq c\epsilon\pax\feps(x_\delta),
\end{multline*}
where in the last step we have used the previous splitting in $B(0,\epsilon)$ and $\RR-B(0,\epsilon)$, \eqref{IVfact1} and \eqref{IVfact1.b}. This concludes the result. 
\end{proof}

Now that we have finished with $\Gamma^2_1$, the term with $\Gamma_1^1$ is
$$
A_7=\text{P.V.}\int_{\RR}\frac{\left(|\tanh(\eta/2)|^{3\epsilon}-1\right)\Gamma_1^1}{\left(1+\frac{\tan^2(\theta)|\tanh\left(\eta/2\right)|^{2\epsilon}}{\tanh^2\left(\eta/2\right)}\right)^2}d\eta.
$$
We have
\begin{lem}\label{IVlemdf5}
If $\alpha_4>\sec^2\left(\|f_0\|_{L^\infty(\RR)}\right)$, we have
$$
A_7\leq c\epsilon\left(\sec\left(\|f_0\|_{L^\infty(\RR)}\right)+1\right)^7\pax\feps(x_\delta)+\alpha_4c\epsilon\pax\feps(x_\delta).
$$
\end{lem}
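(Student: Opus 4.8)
The plan is to follow closely the treatment of $A_6$ in Lemma \ref{IVlemdf4}: decompose $A_7$ according to the three summands of $\Gamma_1^1$ entering \eqref{IVeqGamma1}, dispose of the genuinely integrable contributions directly with \eqref{IVfact1} and \eqref{IVfact1.b}, and absorb the remaining singular part against the diffusive term $-\alpha_4(\Lambda_l-\Lambda_l^{1-3\epsilon})\pax\feps(x_\delta)$, which at the maximum point $x_\delta$ of $\pax\feps(\cdot,\delta)$ has a favourable sign by Lemma \ref{IVops}(4).

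First I would record the elementary ingredients. Since the hypotheses \eqref{IVH3}--\eqref{IVH5} (or \eqref{IVsisder2}) hold at time $\delta$, we have $0<\pax\feps(x_\delta)=\|\pax\feps(\delta)\|_{L^\infty(\RR)}\leq1$, hence $(\pax\feps(x_\delta))^3\leq\pax\feps(x_\delta)$; moreover $|\mu_1(\delta)|\leq c(\|f_0\|_{L^\infty(\RR)}^2+1)\pax\feps(x_\delta)$ for $|\eta|\leq1$ by \eqref{IVmu1}, $|\mu_1(\delta)|<\pax\feps(x_\delta)$ for $|\eta|\geq1$ by \eqref{IVfact2}, and $\sec^2(\theta)\leq\sec^2(\|f_0\|_{L^\infty(\RR)})$ by Proposition \ref{IVMPf}. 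On the outer region $|\eta|\geq1$ all the kernels $\sinh^{-2},\tanh^{-2},\cosh^{-2}$ are bounded, the denominator of $A_7$ is $\geq1$, and $||\tanh(\eta/2)|^{3\epsilon}-1|\leq3\epsilon|\log|\tanh(\eta/2)||$ is integrable by \eqref{IVfact1.b}; collecting the powers of $\sec(\|f_0\|_{L^\infty(\RR)})$ produced by $\mu_1(\delta)$ and $\sec^2(\theta)$ one gets a bound $c\epsilon(\sec(\|f_0\|_{L^\infty(\RR)})+1)^7\pax\feps(x_\delta)$ for this piece. The middle summand $\mu_1^3(\delta)/\cosh^2((x_\delta-\eta)/2)$ of $\Gamma_1^1$ stays non-singular at $\eta=0$ (there $\mu_1(\delta)\to\pax\feps(x_\delta)$), so its inner part $|\eta|\leq1$ is handled by the same device.

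What really needs work is the inner part of the two singular summands, $-(\pax\feps(x_\delta))^2\mu_1(\delta)/(\cos^2(\theta)\tanh^2((x_\delta-\eta)/2))$ and $\pax\feps(x_\delta)\mu_1^2(\delta)/(\sinh^2((x_\delta-\eta)/2)\cos^2(\theta))$. Writing $\mu_1(\delta)=\pax\feps(x_\delta)+(\mu_1(\delta)-\pax\feps(x_\delta))$ (and $\mu_1^2(\delta)$ accordingly) and using the hyperbolic identity $\tanh^{-2}(s)-\sinh^{-2}(s)=1$, their leading parts collapse to the bounded quantity $-(\pax\feps(x_\delta))^3/\cos^2(\theta)$, while the remaining parts carry the factor $\mu_1(\delta)-\pax\feps(x_\delta)$ (or $\mu_1^2(\delta)-(\pax\feps(x_\delta))^2$), which is $O(\eta^2)$ by \eqref{IVfact3}--\eqref{IVfact4} when $\mu_1(\delta)-\pax\feps(x_\delta)\geq0$ (recall $\pax^2\feps(x_\delta)=0$); this defeats the $\sinh^{-2}$ and $\tanh^{-2}$ singularities, and after multiplying by $||\tanh(\eta/2)|^{3\epsilon}-1|=O(\epsilon|\log|\eta||)$ and integrating one is left with a term of size $\epsilon$. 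When $\mu_1(\delta)-\pax\feps(x_\delta)<0$ --- where only the one-sided bound \eqref{IVfact3} is available uniformly in $\epsilon$ --- I would instead absorb the singular kernel against $-\alpha_4(\Lambda_l-\Lambda_l^{1-3\epsilon})\pax\feps(x_\delta)$, using the decomposition of $\Lambda_l-\Lambda_l^{1-3\epsilon}$ analogous to the one displayed in the proof of Lemma \ref{IVlemdf4} and dropping, at the maximum $x_\delta$, its two non-negative pieces (exactly as was done there for $C_6$ against $\Lambda_l-\Lambda_l^{1-\epsilon}$); the remaining piece matches the bad kernel of $\Gamma_1^1$, the hypothesis $\alpha_4>\sec^2(\|f_0\|_{L^\infty(\RR)})$ forces the correct sign of the difference (just as $\alpha_2/2>\sec^2(\|f_0\|_{L^\infty(\RR)})$ did in Lemma \ref{IVlemdf2}), and what survives is again $O(\epsilon|\log|\eta||)$, hence integrable, producing the term $\alpha_4 c\epsilon\pax\feps(x_\delta)$.

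The hard part will be organising this last cancellation so that the leftover is genuinely of order $\epsilon$ rather than merely bounded, simultaneously keeping the denominators $(1+\mu_1^2(\delta)|\tanh(\eta/2)|^{2\epsilon})^{-2}\leq1$ and the $\sec^2(\theta)$ factors under control throughout (and, where useful, writing $(1+u)^{-2}=1+((1+u)^{-2}-1)$ with $|(1+u)^{-2}-1|\leq c\mu_1^2(\delta)$ to extract additional smallness), and tracking the polynomial dependence on $\sec(\|f_0\|_{L^\infty(\RR)})$ closely enough to land at the seventh power claimed. Adding the inner and outer estimates then gives the asserted bound for $A_7$.
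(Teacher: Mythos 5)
Your plan is correct and is essentially the argument the paper intends: the paper omits this proof as ``similar to Lemma \ref{IVlemdf4}'', i.e. control the nonsingular pieces of $\Gamma_1^1$ with \eqref{IVfact1}--\eqref{IVfact1.b} and absorb the genuinely singular part into $-\alpha_4(\Lambda_l-\Lambda_l^{1-3\epsilon})\pax\feps(x_\delta)$ at the maximum point, using $\alpha_4>\sec^2\left(\|f_0\|_{L^\infty(\RR)}\right)$ exactly as you propose. Your only deviation, the case split on the sign of $\mu_1(\delta)-\pax\feps(x_\delta)$ via \eqref{IVfact3}--\eqref{IVfact4}, is harmless but not needed: after inserting $\pm\,2\theta/\eta$ the only piece lacking $O(\eta^2)$ smallness is $\pax\feps(x_\delta)-2\theta/\eta$, which is nonnegative for every $\eta$ at the maximum, so the comparison with the $\alpha_4$-diffusion works uniformly, as for $C_6$ in Lemma \ref{IVlemdf4}.
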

\begin{proof}
The proof is similar to the proof of Lemma \ref{IVlemdf4} and, for the sake of brevity, omit it. 
\end{proof}

In order to finish bounding $\Gamma_1$ in \eqref{IVeqGamma1}, we have to bound the term
$$
A_8=\text{P.V.}\int_{\RR}\frac{\Gamma_1^1+\Gamma_1^2}{\left(1+\frac{\tan^2(\theta)|\tanh\left(\eta/2\right)|^{2\epsilon}}{\tanh^2\left(\eta/2\right)}\right)^2}d\eta.
$$
This term, akin to the singular term in \cite{CGO}, is bounded using the hypotheses \eqref{IVH3} and \eqref{IVH4}.
\begin{lem}\label{IVlemdf6}
Using \eqref{IVH3}, \eqref{IVH4} and \eqref{IVH5}, we obtain
$$
A_8\leq \frac{\left(\pax \feps(x_\delta)+5(\pax \feps(x_\delta))^3\right)\left(1+\pax \feps(x_\delta)\left(\pax \feps(x_\delta)+\frac{\tan\left(\frac{\pax \feps(x_\delta)}{2}\right)}{\tanh\left(\frac{1}{2}\right)}\right)\right)}{6\tanh(1/2)\cos^2(\|\feps(\delta)\|_{L^\infty(\RR)})}.
$$
\end{lem}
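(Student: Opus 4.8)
The plan is to first collapse $\Gamma_1^1+\Gamma_1^2$ into a single, transparent term. Write $p=\pax\feps(x_\delta)$ and $d=(x_\delta-\eta)/2$, and recall $\mu_1(\delta)=\tan(\theta)/\tanh(d)$, so that $\mu_1^2(\delta)\tanh^2(d)=\tan^2(\theta)$. Using $\tanh^{-2}(d)=1+\sinh^{-2}(d)$ one sees that all the terms in $\Gamma_1^1+\Gamma_1^2$ not carrying a factor $\sinh^{-2}(d)$ cancel, and a short manipulation (using also $\cos^2(\theta)+\sin^2(\theta)=1$) gives
$$
\Gamma_1^1+\Gamma_1^2=\big(\mu_1(\delta)-p\big)\frac{\big(1+p\,\mu_1(\delta)\big)\sec^2(\theta)}{\sinh^2(d)}.
$$
Since $\tan^2(\theta)|\tanh(d)|^{2\epsilon}/\tanh^2(d)=\mu_1^2(\delta)|\tanh(d)|^{2\epsilon}\ge0$, the denominator in the definition of $A_8$ is $\ge1$, so $A_8$ is bounded above by the integral of the right-hand side over the set where that expression is nonnegative, the rest of the integrand being $\le0$.

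Next I would split $\RR=\{|x_\delta-\eta|>1\}\cup\{|x_\delta-\eta|\le1\}$. On the outer region Lemma~\ref{IVlemmu}(1), which rests on \eqref{IVH4} (valid at time $\delta$ for $\delta$ small), gives $|\mu_1(\delta)|<p<1$; hence $\mu_1(\delta)-p<0$ while $1+p\mu_1(\delta)\ge1-p^2>0$, so the integrand is $\le0$ there and the outer region is discarded.

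On the inner region, $x_\delta$ being a maximum of $\pax\feps$ gives $|\theta|\le p\,|x_\delta-\eta|/2$ (the amplitude being controlled by Proposition~\ref{IVMPf}); combined with the convexity of $\tan$ on $[0,\pi/2)$ and the concavity of $\tanh$ on $[0,\infty)$ this yields $|\mu_1(\delta)|\le\tan(p/2)/\tanh(1/2)$ on $\{|x_\delta-\eta|\le1\}$, and the smallness hypotheses \eqref{IVH3}, \eqref{IVH4}, \eqref{IVH5} then force $1+p\mu_1(\delta)\ge0$ there. Consequently the integrand is $\le0$ on $\{\mu_1(\delta)<p\}$, while on $\{\mu_1(\delta)\ge p\}$ I use the denominator $\ge1$, the estimate $0\le\mu_1(\delta)-p\le\frac{(x_\delta-\eta)^2}{48\tanh(1/2)}(p+5p^3)$ from Lemma~\ref{IVlemmu}(3), the elementary inequality $(x_\delta-\eta)^2\le4\sinh^2((x_\delta-\eta)/2)$, the bound $1+p\mu_1(\delta)\le1+p\big(p+\tan(p/2)/\tanh(1/2)\big)$ (a consequence of Lemma~\ref{IVlemmu}(3)--(4), since $\mu_1(\delta)\le p+\tan(p/2)/\tanh(1/2)$), and $\sec^2(\theta)\le\cos^{-2}(\|\feps(\delta)\|_{L^\infty(\RR)})$, to bound the integrand pointwise by
$$
\frac{(p+5p^3)\big(1+p\,(p+\tan(p/2)/\tanh(1/2))\big)}{12\tanh(1/2)\cos^2(\|\feps(\delta)\|_{L^\infty(\RR)})}.
$$
Integrating this constant over $\{|x_\delta-\eta|\le1\}$, an interval of length $2$, turns $\tfrac1{12}$ into $\tfrac16$ and produces exactly the bound asserted in Lemma~\ref{IVlemdf6}; the principal value is harmless because on the set where it is positive the integrand is dominated by a bounded function.

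The step I expect to be the real obstacle is the sign control of $1+p\mu_1(\delta)$ on the inner region. Near $\eta=x_\delta$, Lemma~\ref{IVlemmu}(3) only provides the one-sided estimate $\mu_1(\delta)-p\le c(x_\delta-\eta)^2$, with no matching lower bound (that would require control of $\pax^2\feps$, which is not uniform in $\epsilon$); so were $1+p\mu_1(\delta)$ to become negative there, the $\sinh^{-2}(d)$ singularity would be uncontrolled. It is precisely at this point that the full strength of \eqref{IVH3}--\eqref{IVH5} is used. The remaining estimates mirror the treatment of the analogous singular term in \cite{CGO}, and the only real care needed is to let the constants close down to the stated expression, in particular recovering $1/(6\tanh(1/2))$ as $\big(4/(48\tanh(1/2))\big)\cdot2$.
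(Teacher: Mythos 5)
Your reduction of $\Gamma_1^1+\Gamma_1^2$ to $(\mu_1(\delta)-\pax\feps(x_\delta))(1+\pax\feps(x_\delta)\mu_1(\delta))\sec^2(\theta)/\sinh^2((x_\delta-\eta)/2)$ is exactly the paper's $Q_1(\mu_1(\delta))$ (whose roots are $\pax\feps(x_\delta)$ and $-1/\pax\feps(x_\delta)$), and your outer-region discard via \eqref{IVfact2}, your treatment of the region $\{\mu_1(\delta)\geq \pax\feps(x_\delta)\}$ via \eqref{IVfact3}--\eqref{IVfact4}, $(x_\delta-\eta)^2\leq 4\sinh^2((x_\delta-\eta)/2)$ and the length-$2$ interval (recovering $1/6$) coincide with the paper's Case 1. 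The gap is precisely where you flagged it: your claim that \eqref{IVH3}--\eqref{IVH5} force $1+\pax\feps(x_\delta)\,\mu_1(\delta)\geq 0$ on $\{|x_\delta-\eta|\leq 1\}$ is asserted, not proved. It does not follow from \eqref{IVH3} alone (for $\pax\feps(x_\delta)$ close to $1$ one has $\pax\feps(x_\delta)\tan(\pax\feps(x_\delta)/2)/\tanh(1/2)>1$), so it would require unpacking \eqref{IVH5} quantitatively to show that the admissible slopes are small enough that $\pax\feps(x_\delta)\tan(\pax\feps(x_\delta)/2)\leq\tanh(1/2)$; a rough computation suggests this is true at the normalization $\pi/2l=1$, but no such verification appears in your argument (nor in the paper), and without it the case $\mu_1(\delta)<-1/\pax\feps(x_\delta)$ leaves a positive integrand that your scheme does not control.

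The paper closes this case without any such numerical fact. For $\mu_1(\delta)<-\pax\feps(x_\delta)$ it writes $Q_1=\pax\feps(x_\delta)\bigl(\mu_1^2(\delta)-(\pax\feps(x_\delta))^2\bigr)+(1-(\pax\feps(x_\delta))^2)(\mu_1(\delta)-\pax\feps(x_\delta))$, discards the second (nonpositive) term, and splits the factor $\mu_1(\delta)+\pax\feps(x_\delta)$ as in \eqref{IVspliteqcan2}: the piece $\frac{2\theta}{x_\delta-\eta}+\pax\feps(x_\delta)$ is nonnegative because the difference quotient is bounded by the maximal slope, so its product with $\pax\feps(x_\delta)(\mu_1(\delta)-\pax\feps(x_\delta))\leq0$ can be dropped, while the Taylor remainders $\frac{\tan(\theta)-\theta}{\tanh((x_\delta-\eta)/2)}$ and $\theta\bigl(\frac{1}{\tanh((x_\delta-\eta)/2)}-\frac{2}{x_\delta-\eta}\bigr)$ are $O((x_\delta-\eta)^2)$ and reproduce the bound \eqref{IVeqI1bound} of Case 1. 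Replacing your sign claim by this splitting (or, alternatively, actually carrying out the quantitative verification of your claim against \eqref{IVH5}) repairs the argument; as it stands, the crucial case is not established.
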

\begin{proof}
Using classical trigonometric identities we can write
\begin{multline*}
\Gamma_1^1=\frac{\pax\feps(x_\delta)\sec^2(\theta)\mu_1^2(\delta)}{\sinh^2(\eta/2)}+(\pax\feps(x_\delta))^2\mu^3_1(\delta)\sec^2(\theta)\\
+\frac{\mu_1^3(\delta)}{\sinh^ 2(\eta/2)}-\frac{(\pax\feps(x_\delta))^2\sec^4(\theta)\mu_1(\delta)}{\tanh^2(\eta/2)}-\frac{\mu_1(\delta)\tan^2(\theta)}{\sinh^4(\eta/2)},
\end{multline*}
$$
\Gamma_1^2=-\frac{\pax\feps(x_\delta)\sec^2(\theta)}{\sinh^2(\eta/2)}+(\pax\feps(x_\delta))^2\mu_1(\delta)\sec^2(\theta)+\frac{\mu_1(\delta)}{\sinh^ 2(\eta/2)},
$$
and
\begin{multline*}
A_8=\text{P.V.}\left(\int_{B(0,1)}+\int_{B^c(0,1)}\right)\frac{\left(\pax\feps(x_\delta)\mu_1^2(\delta)+\mu_1(\delta)(1-(\pax\feps(x_\delta))^2)-\pax\feps(x_\delta))\right)}{\cos^2(\theta)\sinh^2(\eta/2)\left(1+\frac{\tan^2(\theta)|\tanh\left(\eta/2\right)|^{2\epsilon}}{\tanh^2\left(\eta/2\right)}\right)^2}d\eta\\
=B_{11}+B_{12}.
\end{multline*}
Therefore, as in \cite{CGO}, the sign of $A_8$ is the same as the sign of 
$$
Q_1(\mu_1(\delta))=\pax\feps(x_\delta)\mu_1^2(\delta)+\mu_1(\delta)(1-\left(\pax\feps(x_\delta))^2\right)-\pax\feps(x_\delta).
$$
The roots of $Q_1$ are $\pax f(x_\delta)$ and $-1/\pax f(x_\delta)$, so, if we have
$$
\left|\mu_1(\delta)\right|\leq \min\left\{\|\pax \feps(\delta)\|_{L^\infty},\frac{1}{\|\pax \feps(\delta)\|_{L^\infty}}\right\},
$$
then we can ensure that this contribution is negative. Since \eqref{IVpropmu1delta}, we get
$$
B_{12}=\text{P.V.}\int_{B^c(0,1)}\frac{\sec^2(\theta)\left(\pax\feps(x_\delta)\mu_1^2(\delta)+\mu_1(\delta)(1-(\pax\feps(x_\delta))^2)-\pax\feps(x_\delta))\right)}{\sinh^2(\eta/2)\left(1+\frac{\tan^2(\theta)|\tanh\left(\eta/2\right)|^{2\epsilon}}{\tanh^2\left(\eta/2\right)}\right)^2}d\eta<0.
$$
Using the cancellation when $\mu_1(\delta)=\pax f(x_\delta)$, we obtain
\begin{equation}\label{IVeqi1}
B_{11}=\text{P.V.}\int_{B(0,1)}\frac{Q_1(\mu_1(\delta))}{\cos^ 2(\theta)\sinh^2(\eta/2)\left(1+\frac{\tan^2(\theta)|\tanh\left(\eta/2\right)|^{2\epsilon}}{\tanh^2\left(\eta/2\right)}\right)^2}d\eta,
\end{equation}
where
$$
Q_1(\mu_1(\delta))=\pax \feps(x_\delta)(\mu_1^2(\delta)-(\pax \feps(x_\delta))^2)+(1-(\pax \feps(x_\delta))^2)(\mu_1(\delta)-\pax \feps(x_\delta)).
$$
We remark that $\mu_1(\delta)-\pax \feps(x_\delta)<\mu_1(\delta)+\pax \feps(x_\delta)$. We consider the cases given by the sign and the size of $\mu_1(\delta)$.

\emph{1. Case $\mu_1(\delta)>\pax f(x_\delta)$:} In this case, we have $\mu_1(\delta)-\pax f(x_\delta)>0$ and $\mu_1(\delta)+\pax f(x_\delta)>0$. Using the definition of $\theta$ in \eqref{IVtheta} and the fact that $|\eta|\leq 1$, we have \eqref{IVfact3} (see Lemma \ref{IVlemmu}). Notice that, in this case, we have $\mu_1^2(\delta)-(\pax f(x_\delta))^2>0$ and we get \eqref{IVfact4}. Due to \eqref{IVfact3} and \eqref{IVfact4} we obtain
\begin{multline}\label{IVeqI1bound}
B_{11}\leq\frac{\left(\pax \feps(x_\delta)+5(\pax \feps(x_\delta))^3\right)\left(1+\pax \feps(x_\delta)\left(\pax \feps(x_\delta)+\frac{\tan\left(\frac{\pax \feps(x_\delta)}{2}\right)}{\tanh\left(\frac{1}{2}\right)}\right)\right)}{48\tanh(1/2)\cos^2(\|\feps(\delta)\|_{L^\infty}(\RR))}\\
\times\int_{B(0,1)}\frac{\eta^2d\eta}{\sinh^2\left(\frac{\eta}{2}\right)}\\
\leq \frac{\left(\pax \feps(x_\delta)+5(\pax \feps(x_\delta))^3\right)\left(1+\pax \feps(x_\delta)\left(\pax \feps(x_\delta)+\frac{\tan\left(\frac{\pax \feps(x_\delta)}{2}\right)}{\tanh\left(\frac{1}{2}\right)}\right)\right)}{6\tanh(1/2)\cos^2(\|\feps(\delta)\|_{L^\infty(\RR)})}.
\end{multline}

\emph{2. Case $-\pax \feps(x_\delta)<\mu_1(\delta)<\pax \feps(x_\delta)>0$:} In this case we have $\mu_1(\delta)-\pax \feps(x_\delta)\leq0$ and $\mu_1(\delta)+\pax \feps(x_\delta)>0$. Therefore, we get $B_{11}<0$ and we can neglect it. 

\emph{3. Case $\mu_1(\delta)<-\pax \feps(x_\delta)$:} We remark that in this case we have $\mu_1(\delta)-\pax \feps(x_\delta)\leq0$ and $\mu_1(\delta)+\pax \feps(x_\delta)\leq0$. We split 
\begin{equation}\label{IVspliteqcan2}
\mu_1(\delta)+\pax \feps(x_\delta)=\frac{\tan(\theta)-\theta}{\tanh(\eta/2)}
+\theta\left(\frac{1}{\tanh(\eta/2)}-\frac{2}{\eta}\right)+\frac{2\theta}{\eta}+\pax \feps(x_\delta).
\end{equation}
The last term is now positive due to the definition of $\pax \feps(x_\delta)$. Then, in this case, we have
$$
\pax \feps(x_\delta)(\mu_1(\delta)-\pax f(x_\delta))\left(\frac{2\theta}{x_t-\eta}+\pax \feps(x_t)\right)\leq0,
$$
and we can neglect its contribution. Using Taylor's theorem in \eqref{IVspliteqcan2} we obtain the bound \eqref{IVfact4} and \eqref{IVeqI1bound}. 
\end{proof}

We are done with $\mathcal{I}_1$ in \eqref{IVeqreg35} and now we move on to $\mathcal{I}_2$. These terms are easier because the integrals are not singular. With the same ideas as before we can bound the term involving $\Omega_2$:
\begin{lem}\label{IVlemdf7}
The contribution of $\Omega_2$ is bounded by
\begin{equation*}
\epsilon\left|\int_\RR\Omega_2d\eta\right|\leq \epsilon c\sec^4\left(\|f_0\|_{L^\infty(\RR)}\right)\pax\feps(x_\delta).
\end{equation*}
\end{lem}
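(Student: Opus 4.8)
The plan is to estimate the five summands of $\Omega_2$ in \eqref{IVeqOmega2} one by one, observing that four of them are genuinely absolutely integrable while only one -- the term containing $\tan(\bar{\theta})\big/\big(2\cosh^4((x_\delta-\eta)/2)\tanh((x_\delta-\eta)/2)\big)$ -- carries a first order pole at $\eta=x_\delta$ that must be handled through the principal value. In every estimate one first drops the denominator $\big(1+\tan^2(\bar{\theta})\tanh^2((x_\delta-\eta)/2)|\tanh((x_\delta-\eta)/2)|^{-2\epsilon}\big)^{-2}$, which is bounded by $1$, and one uses Proposition \ref{IVMPf} together with \eqref{IVH4} (which forces $\|\feps(t)\|_{L^\infty(\RR)}\leq\|f_0\|_{L^\infty(\RR)}<\pi/2$) to get $\sec^2(\bar{\theta})\leq\sec^2(\|f_0\|_{L^\infty(\RR)})$, $|\tan(\bar{\theta})|\leq\tan(\|f_0\|_{L^\infty(\RR)})$ and $|\mu_2(\delta)|=|\tan(\bar{\theta})\tanh((x_\delta-\eta)/2)|\leq\tan(\|f_0\|_{L^\infty(\RR)})$; recall also $0\leq\pax\feps(x_\delta)=\|\pax\feps(\delta)\|_{L^\infty(\RR)}$.

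As in the previous lemmas I would split each integral into $B(x_\delta,1)$ and $B^c(x_\delta,1)$. For the four regular summands one exploits the elementary identities $\tanh^2((x_\delta-\eta)/2)|\tanh((x_\delta-\eta)/2)|^{-3\epsilon}=|\tanh((x_\delta-\eta)/2)|^{2-3\epsilon}$ and $\tanh((x_\delta-\eta)/2)|\tanh((x_\delta-\eta)/2)|^{-3\epsilon}=\pm|\tanh((x_\delta-\eta)/2)|^{1-3\epsilon}$, together with $|\tanh((x_\delta-\eta)/2)|^{-\epsilon}\leq c|x_\delta-\eta|^{-\epsilon}$ on $B(x_\delta,1)$; since $3\epsilon<1$ all of these are locally integrable, and the factors $\cosh^{-2}((x_\delta-\eta)/2)$, $\cosh^{-4}((x_\delta-\eta)/2)$ give exponential decay on $B^c(x_\delta,1)$. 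Combined with the bounds of the previous paragraph, these four pieces are controlled by $c(\tan^3+\sec^2)(\|f_0\|_{L^\infty(\RR)})\pax\feps(x_\delta)\leq c\sec^4(\|f_0\|_{L^\infty(\RR)})\pax\feps(x_\delta)$, using $\sec(\cdot)\geq1$.

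The heart of the matter is the singular summand. On $B^c(x_\delta,1)$ it is harmless, since there $|\tanh((x_\delta-\eta)/2)|\geq\tanh(1/2)>0$; so write $\eta=x_\delta-s$ and work on $B(0,1)$. Set $G(z,s)=z\big/\big(\cosh^4(s/2)(1+z^2|\tanh(s/2)|^{2-2\epsilon})^2\big)$, which is \emph{even} in $s$ and satisfies $|\partial_zG|\leq3$ for every $z,s$, and $g(s)=\tan\big(\tfrac12(\feps(x_\delta)+\feps(x_\delta-s))\big)$. Since $\feps\in H^3(\RR)$ we have $\pax\feps\in L^\infty(\RR)$ and $g\in C^1$ with $g'(s)=-\tfrac12\sec^2(\bar{\theta})\pax\feps(x_\delta-s)$, hence $|g(s)-g(-s)|\leq c\sec^2(\|f_0\|_{L^\infty(\RR)})\pax\feps(x_\delta)|s|$. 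Because $G$ is even in $s$, only the non-evenness of $g$ survives in $h(s):=G(g(s),s)$, so $h(s)-h(-s)=\partial_zG(\xi,s)\big(g(s)-g(-s)\big)$ and thus $|h(s)-h(-s)|\leq c\sec^2(\|f_0\|_{L^\infty(\RR)})\pax\feps(x_\delta)|s|$. The principal value then symmetrizes the odd kernel $|\tanh(s/2)|^{-\epsilon}/\tanh(s/2)$: its product with the (even) part of $h$ integrates to zero, and the remainder obeys
$$
\left|\frac{|\tanh(s/2)|^{-\epsilon}}{\tanh(s/2)}\right|\,|h(s)-h(-s)|\leq c\sec^2(\|f_0\|_{L^\infty(\RR)})\pax\feps(x_\delta)\,|s|^{-\epsilon},
$$
which is integrable on $B(0,1)$ since $\epsilon<1/10$. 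Adding the five contributions and restoring the prefactor $\epsilon$ gives the stated bound.

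The step I expect to be the main obstacle is this last one: one must verify that the (logarithmic-type) blow-up of $|\tanh(s/2)|^{-\epsilon}$ and the genuine $1/s$ pole combine, after the principal value cancellation and using only the $C^1$ regularity of $\feps$, into the integrable power $|s|^{-\epsilon}$, and that the regularizing factor $|\tanh(s/2)|^{2-2\epsilon}$ inside the denominator produces no extra singularity -- it does not, precisely because it enters $G$ only through a function even in $s$, so it cancels in $h(s)-h(-s)$ altogether, and because $2-2\epsilon>1$.
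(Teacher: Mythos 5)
There is a genuine gap, and it concerns precisely the feature that makes the lemma useful: the factor $\pax\feps(x_\delta)$ on the right-hand side. First, a small bookkeeping point: \eqref{IVeqOmega2} has four summands, not five; three are regular and only the one with $\tan(\bar\theta)\big/\big(2\cosh^4((x_\delta-\eta)/2)\tanh((x_\delta-\eta)/2)\big)$ is singular. Among the regular ones, the summand
$$
\frac{|\tanh((x_\delta-\eta)/2)|^{-3\epsilon}\tan^2(\bar\theta)\mu_2(\delta)}{2\cosh^4((x_\delta-\eta)/2)\left(1+\frac{\tan^2(\bar{\theta})\tanh^2\left((x_\delta-\eta)/2\right)}{|\tanh\left((x_\delta-\eta)/2\right)|^{2\epsilon}}\right)^{2}}=\frac{|\tanh((x_\delta-\eta)/2)|^{-3\epsilon}\tan^3(\bar\theta)\tanh((x_\delta-\eta)/2)}{2\cosh^4((x_\delta-\eta)/2)\left(1+\frac{\tan^2(\bar{\theta})\tanh^2\left((x_\delta-\eta)/2\right)}{|\tanh\left((x_\delta-\eta)/2\right)|^{2\epsilon}}\right)^{2}}
$$
contains no factor of $\pax\feps(x_\delta)$ at all: $\bar\theta$ is the \emph{sum} $(\feps(x_\delta)+\feps(\eta))/2$, so it is not small when the slope is small. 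Your crude estimate (drop the denominator, bound $\tan$, $\sec$ by their sup) therefore yields only $\epsilon\,c\tan^3(\|f_0\|_{L^\infty(\RR)})$, not $\epsilon\,c\sec^4(\|f_0\|_{L^\infty(\RR)})\pax\feps(x_\delta)$; the factor $\pax\feps(x_\delta)$ in your displayed conclusion for the regular pieces appears without justification. This is not cosmetic: in Step 1 of Proposition \ref{IVMPdf} these bounds must be absorbed by $-\sqrt{\epsilon}\,\alpha_1\pax\feps(x_\delta)$, which fails for a contribution of size $\epsilon\tan^3(\|f_0\|_{L^\infty(\RR)})$ when $\pax\feps(x_\delta)$ is small. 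The same loss occurs in your treatment of the singular summand on $B^c(x_\delta,1)$, where you discard the cancellation and would again only get $\epsilon\,c\tan(\|f_0\|_{L^\infty(\RR)})$.

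The good news is that the tool you built for the singular term is exactly what closes both holes, applied globally rather than only on $B(x_\delta,1)$. In both problematic pieces the kernel in $s=x_\delta-\eta$ (namely $|\tanh(s/2)|^{-3\epsilon}\tanh(s/2)\cosh^{-4}(s/2)$, respectively $|\tanh(s/2)|^{-\epsilon}\tanh^{-1}(s/2)\cosh^{-4}(s/2)$) is odd and integrable against $|s|\wedge 1$ on all of $\RR$, while the remaining factor depends on $s$ only through the even quantity $|\tanh(s/2)|^{2-2\epsilon}$ and through $\bar\theta$; writing it as $\tilde G(\tan\bar\theta,s)$ with $\tilde G(z,s)=z^3\big(1+z^2|\tanh(s/2)|^{2-2\epsilon}\big)^{-2}$ (or $z$ in place of $z^3$ for the singular piece), one has $|\partial_z\tilde G|\leq c\sec^2(\|f_0\|_{L^\infty(\RR)})$ on the relevant range, hence the antisymmetric part is bounded by $c\sec^4(\|f_0\|_{L^\infty(\RR)})\pax\feps(x_\delta)|s|$ exactly as in your argument, and the exponential decay of $\cosh^{-4}(s/2)$ takes care of $|s|>1$. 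With that modification (and the corrected count of summands) your proof gives the stated bound; note that the paper offers no proof of this lemma beyond declaring it straightforward, so your symmetrization argument, once applied to every summand lacking an explicit $\pax\feps(x_\delta)$, is a legitimate way to substantiate it.
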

\begin{proof}
The proof is straightforward.
\end{proof}

We are left with $\Omega_1$ in \eqref{IVeqOmega1}. First, we consider
$$
A_{9}=\int_\RR\frac{\Omega_1^1+\Omega_1^2}{\left(1+\frac{\tan^2(\bar{\theta})\tanh^2\left((x_t-\eta)/2\right)}{|\tanh\left((x_t-\eta)/2\right)|^{2\epsilon}}\right)^2}d\eta.
$$
\begin{lem}\label{IVlemdf8}
The term $A_9$ is bounded as
\begin{equation*}
|A_9|\leq 4\sec^ 2\left(\|f_0\|_{L^\infty(\RR)}\right)\left(\tan\left(\|\feps(\delta)\|_{L^\infty(\RR)}\right)+\pax\feps(x_\delta)\right)
\end{equation*}
\end{lem}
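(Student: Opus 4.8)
The quantity $A_9$ collects exactly the part of the $\Xi_2^\epsilon$--contribution to $\pat\pax\feps(x_\delta)$ that is \emph{not} premultiplied by $\epsilon$; since $\Xi_2^\epsilon$ is not singular, the integrand decays and $A_9$ is an ordinary, absolutely convergent integral, so the plan is to estimate it by a crude pointwise bound followed by a one-dimensional integration. The first step is to simplify the numerator $\Omega_1^1+\Omega_1^2$ using the identities $\sec^2=1+\tan^2$, $1-\tanh^2=\text{sech}^2$ and $\mu_2=\tan(\bar{\theta})\tanh((x_\delta-\eta)/2)$ (so $\mu_2^2=\tan^2(\bar{\theta})\tanh^2((x_\delta-\eta)/2)$). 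The decisive point is that the three terms of $\Omega_1^1+\Omega_1^2$ that are quadratic in $\pax\feps(x_\delta)$ — namely $(\pax\feps(x_\delta))^2\mu_2\sec^2(\bar{\theta})$, $-(\pax\feps(x_\delta))^2\mu_2\tanh^2((x_\delta-\eta)/2)\sec^4(\bar{\theta})$ and $(\pax\feps(x_\delta))^2\mu_2^3\sec^2(\bar{\theta})$ — combine, via $1-\tanh^2\sec^2(\bar{\theta})+\mu_2^2=\text{sech}^2((x_\delta-\eta)/2)$, into the single term $(\pax\feps(x_\delta))^2\mu_2\sec^2(\bar{\theta})\,\text{sech}^2((x_\delta-\eta)/2)$. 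This cancellation is essential: each of those three terms fails, on its own, to decay as $|\eta|\to\infty$ (both the term and the denominator $d:=1+\mu_2^2|\tanh((x_\delta-\eta)/2)|^{-2\epsilon}$ tend to finite nonzero limits), and only the specific combination is integrable. Grouping the remaining (linear and zeroth order in $\pax\feps(x_\delta)$) terms in the same way yields
\begin{multline*}
\Omega_1^1+\Omega_1^2=\text{sech}^2\left(\frac{x_\delta-\eta}{2}\right)\bigl[\pax\feps(x_\delta)\sec^2(\bar{\theta})(1-\mu_2^2)-\mu_2(1+\mu_2^2)\\
+(\pax\feps(x_\delta))^2\mu_2\sec^2(\bar{\theta})-\tan^2(\bar{\theta})\mu_2\,\text{sech}^2\left(\frac{x_\delta-\eta}{2}\right)\bigr],
\end{multline*}
so every surviving term carries the integrable weight $\text{sech}^2((x_\delta-\eta)/2)$.

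Next I would estimate the bracket termwise. By Proposition \ref{IVMPf} we have $\|\feps(\delta)\|_{L^\infty(\RR)}\le\|f_0\|_{L^\infty(\RR)}$, hence $\sec^2(\bar{\theta})\le\sec^2(\|f_0\|_{L^\infty(\RR)})$ and $|\mu_2|\le|\tan(\bar{\theta})|\le\tan(\|\feps(\delta)\|_{L^\infty(\RR)})$; hypothesis \eqref{IVH4} (or \eqref{IVsisder2}) forces $\tan(\|f_0\|_{L^\infty(\RR)})<1$, so also $|\mu_2|<1$; and the bootstrap $\pax\feps(x_\delta)=\|\pax\feps(\delta)\|_{L^\infty(\RR)}\le1$ gives $(\pax\feps(x_\delta))^2\le\pax\feps(x_\delta)$. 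Dividing by $d^2$ and using $d\ge1+\mu_2^2$ (because $|\tanh((x_\delta-\eta)/2)|^{-2\epsilon}\ge1$) to absorb the factors $1-\mu_2^2$, $1+\mu_2^2$ and $|\mu_2|/(1+\mu_2^2)^2$, each of the four terms is bounded by a constant multiple of $\sec^2(\|f_0\|_{L^\infty(\RR)})\bigl(\pax\feps(x_\delta)+\tan(\|\feps(\delta)\|_{L^\infty(\RR)})\bigr)\text{sech}^2((x_\delta-\eta)/2)$. Integrating in $\eta$ and using $\int_\RR\text{sech}^2((x_\delta-\eta)/2)\,d\eta=4$ then gives the claimed inequality; to land on precisely the constant $4$ one keeps the $\text{sech}^4$ and $|\mu_2|/(1+\mu_2^2)^2$ savings in the last two terms and, where convenient, discards terms of favourable sign at the maximum, exactly as was done for $B_{12}$ in Lemma \ref{IVlemdf6}.

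The only real difficulty is the bookkeeping. The cancellation that turns $\Omega_1^1+\Omega_1^2$ into the integrable expression above must be carried out exactly — several of the original terms have no decay and only their sum does — and then the constants produced by $\sec^2(\bar{\theta})$, $\tan(\bar{\theta})$, $\mu_2$ and the $\epsilon$-modified denominator have to be tracked carefully to reach the clean bound stated. No step is conceptually hard once the decomposition $\mathcal{I}_2=\Omega_1+\epsilon\Omega_2$ of Section \ref{IVsec1} is in hand.
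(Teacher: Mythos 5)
Your proposal is correct and follows essentially the same route as the paper: the paper also uses the trigonometric identities $\sec^2=1+\tan^2$, $1-\tanh^2=\mathrm{sech}^2$ and $\mu_2=\tan(\bar{\theta})\tanh(\cdot)$ to collapse $\Omega_1^1+\Omega_1^2$ into $\mathrm{sech}^2\left(\frac{x_\delta-\eta}{2}\right)\sec^2(\bar{\theta})\left[-\pax\feps(x_\delta)\mu_2^2+\left((\pax\feps(x_\delta))^2-1\right)\mu_2+\pax\feps(x_\delta)\right]$ (your bracket is algebraically the same, since $\mu_2(1+\mu_2^2)+\tan^2(\bar{\theta})\mu_2\,\mathrm{sech}^2=\mu_2\sec^2(\bar{\theta})$), and then bounds the bracket by $\sec^2(\|f_0\|_{L^\infty(\RR)})\left(\pax\feps(x_\delta)+\tan(\|\feps(\delta)\|_{L^\infty(\RR)})\right)$ using $|\mu_2|\leq\tan(\|\feps(\delta)\|_{L^\infty(\RR)})<1$ and $0\leq\pax\feps(x_\delta)\leq1$, the factor $4$ coming from $\int_\RR\mathrm{sech}^2(\eta/2)\,d\eta=4$. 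The only cosmetic difference is that keeping the three non-quadratic terms grouped (rather than estimating the four terms separately) is what yields the clean constant $4$ directly, exactly as your closing remark anticipates.
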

\begin{proof}
Using classical trigonometric identities, we compute
\begin{multline}\label{IVeqI2bound}
A_{9}=\int_\RR \frac{-\pax\feps(x_\delta)\mu_2^2(\delta)+\left((\pax\feps(x_\delta))^2-1\right)\mu_2(\delta)
+\pax\feps(x_\delta)}{\cosh^2(\eta/2)\cos^2(\bar{\theta})\left(1+\frac{\tan^2(\bar{\theta})\tanh^2\left((x_t-\eta)/2\right)}{|\tanh\left((x_t-\eta)/2\right)|^{2\epsilon}}\right)^2}d\eta\\
\leq 4\sec^ 2\left(\|f_0\|_{L^\infty(\RR)}\right)\left(\tan\left(\|\feps(\delta)\|_{L^\infty(\RR)}\right)+\pax\feps(x_\delta)\right).
\end{multline}
\end{proof}

We have to bound the terms containing $\Omega_1^i$. These terms are 
$$
A_{10}=\int_\RR\frac{\left(|\tanh(\eta/2)|^{-3\epsilon}-1\right)\Omega_1^1}{\left(1+\frac{\tan^2(\bar{\theta})\tanh^2\left(\eta/2\right)}{|\tanh\left(\eta/2\right)|^{2\epsilon}}\right)^2}d\eta \text{ and }A_{11}=\int_\RR\frac{\left(|\tanh(\eta/2)|^{-\epsilon}-1\right)\Omega_1^2}{\left(1+\frac{\tan^2(\bar{\theta})\tanh^2\left(\eta/2\right)}{|\tanh\left(\eta/2\right)|^{2\epsilon}}\right)^2}d\eta.
$$
To obtain the decay with $\epsilon$ we split the integral in the regions $B(0,\epsilon)$ and $B^c(0,\epsilon)$ as before. 
\begin{lem}\label{IVlemdf9}
The terms $A_{10}$ and $A_{11}$ are bounded by
\begin{equation*}
|A_{10}|+|A_{11}| \leq c\pax\feps(x_\delta)\sec^ 4\left(\|f_0\|_{L^\infty(\RR)}\right)\left(1+\tan\left(\|f_0\|_{L^\infty(\RR)}\right)\right)\left(\epsilon^{7/10}+\epsilon\right)
\end{equation*}
\end{lem}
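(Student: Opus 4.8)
The plan is to split both $A_{10}$ and $A_{11}$ over the regions $B(0,\epsilon)$ and $B^c(0,\epsilon)$, exactly as in the treatment of $A_6$, $A_7$ in Lemmas \ref{IVlemdf4} and \ref{IVlemdf5}. On the outer region $B^c(0,\epsilon)$ the factors $|\tanh(\eta/2)|^{-3\epsilon}-1$ and $|\tanh(\eta/2)|^{-\epsilon}-1$ are controlled by $\epsilon$ times an integrable logarithm via the analogue of \eqref{IVfact1} and \eqref{IVfact1.b} (note that here the exponent is negative, so one needs $|\tanh(\eta/2)|^{-\gamma}$, which blows up near $\eta=0$; this is precisely why the inner region must be cut out and why the natural bound on the outer piece carries an $\epsilon^{7/10}$ rather than a clean $\epsilon$). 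On the inner region $B(0,\epsilon)$ one uses that $\Omega_1^1$ and $\Omega_1^2$ are, after the trigonometric rewriting in terms of $\mu_2$ and $\sec^2(\bar\theta)$, bounded pointwise by $c\,\pax\feps(x_\delta)\sec^4(\|f_0\|_{L^\infty})(1+\tan(\|f_0\|_{L^\infty}))$ times a kernel of the form $1/\cosh^2(\eta/2)$ or $1/\cosh^4(\eta/2)$, hence the integral over $B(0,\epsilon)$ is $O(\epsilon)$.

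First I would record the pointwise bounds: using Proposition \ref{IVMPf} we have $|\mu_2(\delta)|=|\tan(\bar\theta)\tanh((x_\delta-\eta)/2)|\le \tan(\|\feps(\delta)\|_{L^\infty})\le\tan(\|f_0\|_{L^\infty})$ and $\sec^2(\bar\theta)\le\sec^2(\|f_0\|_{L^\infty})$, so that $|\Omega_1^1|$ (which contains terms like $\pax\feps\,\mu_2^2\sec^2(\bar\theta)/\cosh^2$, $(\pax\feps)^2\mu_2^3\sec^2(\bar\theta)$, $\mu_2^3/\cosh^2$, $(\pax\feps)^2\mu_2\tanh^2\sec^4(\bar\theta)$, $\tan^2(\bar\theta)\mu_2/\cosh^4$) is bounded by $c\,\pax\feps(x_\delta)\sec^4(\|f_0\|_{L^\infty})(1+\tan(\|f_0\|_{L^\infty}))$ multiplied by a Schwartz-type kernel, and similarly $|\Omega_1^2|\le c\,\pax\feps(x_\delta)\sec^2(\|f_0\|_{L^\infty})/\cosh^2(\eta/2)$. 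The denominators $(1+\dots)^2$ are $\ge1$ and may be dropped for an upper bound.

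Next I would estimate the two regions. On $B(0,\epsilon)$: since $||\tanh(\eta/2)|^{-3\epsilon}-1|\le |\tanh(\eta/2)|^{-3\epsilon}+1$ and the worst case $\eta\sim\epsilon$ gives $|\tanh(\eta/2)|^{-3\epsilon}\sim (\epsilon/2)^{-3\epsilon}\to1$, this factor is $O(1)$, and integrating $O(1)$ against $1/\cosh^2(\eta/2)\le1$ over an interval of length $2\epsilon$ yields a contribution $\le c\,\epsilon\,\pax\feps(x_\delta)\sec^4(\|f_0\|_{L^\infty})(1+\tan(\|f_0\|_{L^\infty}))$. On $B^c(0,\epsilon)$: here $|\eta/2|\ge\epsilon/2$, and by the mean value theorem $||\tanh(\eta/2)|^{-3\epsilon}-1|\le 3\epsilon\,|\log|\tanh(\eta/2)||\,\max_{0\le\gamma\le3\epsilon}|\tanh(\eta/2)|^{-\gamma}$; near $\eta=\epsilon$ the factor $|\tanh(\eta/2)|^{-3\epsilon}$ contributes an extra $\epsilon^{-c\epsilon}$ which is harmless, but the integral $\int_{\epsilon}^{1}|\log|\tanh(\eta/2)||/\cosh^2(\eta/2)\,d\eta$ together with the scaling in $\epsilon$ of the borderline term produces the stated $\epsilon^{7/10}$ (using $0<\epsilon<1/10$, so $\epsilon^{7/10}$ dominates $\epsilon$ but still $\to0$); the far region $|\eta|\ge1$ is clean and gives $O(\epsilon)$ thanks to \eqref{IVfact1.b} and the Gaussian-type decay of the kernels. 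Collecting the inner and outer pieces for both $A_{10}$ and $A_{11}$ yields the claimed bound.

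The main obstacle is the inner region near $\eta=0$ for $A_{10}$: the factor $|\tanh(\eta/2)|^{-3\epsilon}$ has a (mild) singularity there, so one cannot simply use $||\tanh|^{-3\epsilon}-1|\le 3\epsilon|\log|\tanh||$ uniformly as one does for positive exponents in Lemma \ref{IVops}(3). The resolution is exactly the $B(0,\epsilon)$ cutoff: on an $\epsilon$-neighbourhood of the origin one does not try to extract a factor $\epsilon$ from the $|\tanh|^{-3\epsilon}-1$ term at all, but instead gains the factor $\epsilon$ from the length of the integration interval while paying only a bounded price $\epsilon^{-c\epsilon}=O(1)$ from the near-singular factor; the mismatch between the $\epsilon^{-3\epsilon}$ growth rate and the $\epsilon$ gain, optimized over the cutoff scale, is what forces the exponent $7/10$ instead of $1$ and explains the appearance of $\epsilon^{7/10}+\epsilon$ in the statement. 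Everything else is a routine combination of Proposition \ref{IVMPf}, the mean value theorem, and the integrability facts \eqref{IVfact1}–\eqref{IVfact1.b}.
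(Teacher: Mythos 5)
Your overall architecture matches the paper's: split $A_{10}$ and $A_{11}$ at $|\eta|=\epsilon$, bound the trigonometric factors in $\Omega_1^1,\Omega_1^2$ by $c\,\pax\feps(x_\delta)\sec^4\left(\|f_0\|_{L^\infty(\RR)}\right)\left(1+\tan\left(\|f_0\|_{L^\infty(\RR)}\right)\right)$ via Proposition \ref{IVMPf}, and use \eqref{IVfact1}--\eqref{IVfact1.b} on $B^c(0,\epsilon)$. But your inner-region step fails as written: on $B(0,\epsilon)$ the factor $|\tanh(\eta/2)|^{-3\epsilon}$ is \emph{not} $O(1)$ — it diverges like $|\eta/2|^{-3\epsilon}$ as $\eta\to0$, so the worst case is $\eta\to0$, not $\eta\sim\epsilon$ as you assert, and you cannot bound the integrand pointwise by a constant and then multiply by the interval length $2\epsilon$. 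Likewise your closing explanation of the exponent $7/10$ (an ``optimization over the cutoff scale'' balancing an $\epsilon^{-c\epsilon}$ price against the $\epsilon$ gain, with the outer region ``producing'' the $\epsilon^{7/10}$) is not a real mechanism: $\epsilon^{-c\epsilon}\to1$, and on $B^c(0,\epsilon)$ one has $\left||\tanh(\eta/2)|^{-3\epsilon}-1\right|\leq c\,\epsilon\left|\log\left(|\tanh(\eta/2)|\right)\right|$ with the logarithm integrable, so the outer piece is a clean $O(\epsilon)$. Consequently the stated rate $\epsilon^{7/10}+\epsilon$ is never actually derived anywhere in your proposal.

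In the paper the $\epsilon^{7/10}$ comes precisely from the inner region, via the crude but uniform bound $|\tanh(\eta/2)|^{-3\epsilon}\leq|\tanh(\eta/2)|^{-3/10}$ (valid because $|\tanh(\eta/2)|\leq1$ and $0<\epsilon<1/10$), followed by
$$
\int_0^\epsilon\frac{d\eta}{|\tanh(\eta/2)|^{3/10}}\leq\int_0^\epsilon\left(\frac{1}{|\tanh(\eta/2)|^{3/10}}-\frac{1}{|\eta/2|^{3/10}}\right)d\eta+\int_0^\epsilon\frac{d\eta}{|\eta/2|^{3/10}}\leq\epsilon+2\epsilon^{7/10},
$$
with the analogous computation (exponent $1/10$, giving $\epsilon+2\epsilon^{9/10}$, and using \eqref{IVH3}) for $A_{11}$, while the outer region contributes the $O(\epsilon)$ term. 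Your inner estimate is repairable — for instance, integrate the singularity directly, $\int_0^\epsilon|\eta/2|^{-3\epsilon}d\eta\leq c\,\epsilon^{1-3\epsilon}\leq c\,\epsilon$ since $\epsilon^{-3\epsilon}$ is bounded for $\epsilon<1/10$, which is even stronger than what the lemma needs — but as written the pointwise $O(1)$ claim is false and the exponent $7/10$ is attributed to the wrong region and never obtained.
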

\begin{proof}
Using this splitting, $0<\epsilon<1/10$, \eqref{IVfact1}, \eqref{IVfact1.b} and \eqref{IVH4}, we get
\begin{equation*}
A_{10}\leq c\pax\feps(x_\delta)\sec^ 4\left(\|f_0\|_{L^\infty(\RR)}\right)\left(1+\tan\left(\|f_0\|_{L^\infty(\RR)}\right)\right)\left(\int_0^\epsilon\frac{d\eta}{|\tanh(\eta/2)|^{3/10}}+\epsilon\right).
\end{equation*}
With the same ideas and using \eqref{IVH3}, we have 
\begin{equation*}
A_{11}\leq c\pax\feps(x_\delta)\sec^ 2\left(\|f_0\|_{L^\infty(\RR)}\right)\left(1+\tan\left(\|f_0\|_{L^\infty(\RR)}\right)\right)\left(\int_0^\epsilon\frac{d\eta}{|\tanh(\eta/2)|^{1/10}}+\epsilon\right).
\end{equation*}
In order to estimate the decay with $\epsilon$ of these integrals we compute
$$
\int_0^\epsilon\frac{1}{|\tanh(\eta/2)|^{3/10}}-\frac{1}{|\eta/2|^{3/10}}d\eta+\int_0^\epsilon\frac{d\eta}{|\eta/2|^{3/10}}\leq \epsilon+2\epsilon^{7/10},
$$
and
$$
\int_0^\epsilon\frac{1}{|\tanh(\eta/2)|^{/10}}-\frac{1}{|\eta/2|^{1/10}}d\eta+\int_0^\epsilon\frac{d\eta}{|\eta/2|^{1/10}}\leq \epsilon+2\epsilon^{9/10}.
$$
\end{proof}

We have the following result concerning the evolution of the slope:
\begin{prop}\label{IVMPdf}
Let $f_0\in  W^{1,\infty}(\RR)$ be the initial datum in \eqref{IVdefi} satisfying \eqref{IVH3}, \eqref{IVH4} and \eqref{IVH5}, define $\feps_0$ as in \eqref{IVinitial} and let $\feps$ be the classical solution of \eqref{IVeqreg} corresponding to the initial datum $\feps_0$. Then $\feps$ verifies
$$
\|\pax \feps(t)\|_{L^\infty(\RR)}\leq \|\pax \feps_0\|_{L^\infty(\RR)}\leq \|\pax f_0\|_{L^\infty(\RR)}<1.
$$
\end{prop}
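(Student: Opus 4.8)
The plan is to run a continuity (bootstrap) argument on $\|\pax\feps(t)\|_{L^\infty(\RR)}$, using the evolution equation \eqref{IVeqreg31}--\eqref{IVeqreg35} and all of the preceding lemmas. First I fix, once and for all, $\alpha_2>2\sec^2(\|f_0\|_{L^\infty(\RR)})$, $\alpha_3>1$ and $\alpha_4>\sec^2(\|f_0\|_{L^\infty(\RR)})$, as demanded by Lemmas~\ref{IVlemdf1}--\ref{IVlemdf5}; the constant $\alpha_1$ is chosen only at the very end. Let $t^*$ be the supremum of the times $T\ge0$ for which $\feps(t)$ satisfies \eqref{IVH3}, \eqref{IVH4} and \eqref{IVH5} on all of $[0,T]$; since $\feps_0$ from \eqref{IVinitial} satisfies these hypotheses when $\epsilon$ is small, $t^*>0$. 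The goal is to show $t^*=\infty$ and that $\|\pax\feps(t)\|_{L^\infty(\RR)}$ does not increase on $[0,t^*)$; the stated bound then follows, with $\|\pax\feps_0\|_{L^\infty(\RR)}\le\|\pax f_0\|_{L^\infty(\RR)}$ coming from \eqref{IVinitial} and the mollifier estimates of Section~\ref{IVsec1}.

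Next, fix $\delta\in[0,t^*)$ and assume $\|\pax\feps(\delta)\|_{L^\infty(\RR)}=\pax\feps(x_\delta)=\max_x\pax\feps(x,\delta)$; the case of a negative minimum reduces to this one by the symmetry $\feps\mapsto-\feps$ of \eqref{IVeqreg} together with Proposition~\ref{IVMPf}. As in \cite{CGO}, $\frac{d}{dt}\|\pax\feps(t)\|_{L^\infty(\RR)}=\pat\pax\feps(x_\delta)$ at almost every such time, so it is enough to prove $\pat\pax\feps(x_\delta)<0$. In \eqref{IVeqreg31}--\eqref{IVeqreg35} evaluated at $x_\delta$ one has $\pax^3\feps(x_\delta)\le0$, so $\epsilon\pax^3\feps(x_\delta)$ is dropped, and $\pax\theta=\pax\bar\theta=\tfrac12\pax\feps(x_\delta)\ge0$, so the sign hypotheses of the lemmas are in force. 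I then split each dissipative term $-\epsilon\alpha_2\Lambda_l^{1-\epsilon}$, $-\epsilon\alpha_3\Lambda_l^{1-3\epsilon}$, $-(\Lambda_l-\Lambda_l^{1-\epsilon})$, $-\alpha_4(\Lambda_l-\Lambda_l^{1-3\epsilon})$ acting on $\pax\feps$ (each of which is $\le0$ at the maximum $x_\delta$, by positive definiteness; see Lemma~\ref{IVops}) into the exact fraction absorbed inside Lemmas~\ref{IVlemdf1}--\ref{IVlemdf5} plus a non-positive remainder, which is discarded. Then Lemma~\ref{IVlemdf1} controls $\mathcal I_3$, Lemmas~\ref{IVlemdf2}--\ref{IVlemdf6} control the pieces $A_4,\dots,A_8$ making up $\mathcal I_1$, and Lemmas~\ref{IVlemdf7}--\ref{IVlemdf9} control $\epsilon\int_\RR\Omega_2\,d\eta$ and the pieces $A_9,A_{10},A_{11}$ of $\mathcal I_2$. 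Using in addition $\|\feps(\delta)\|_{L^\infty(\RR)}\le\|f_0\|_{L^\infty(\RR)}$ from Proposition~\ref{IVMPf}, all of this assembles into
\begin{equation*}
\pat\pax\feps(x_\delta)\le-8\pax\feps(x_\delta)-\sqrt{\epsilon}\,\alpha_1\pax\feps(x_\delta)+A_8+A_9+c\left(\epsilon+\epsilon^{7/10}\right)\pax\feps(x_\delta),
\end{equation*}
where $c=c(\|f_0\|_{L^\infty(\RR)},\alpha_2,\alpha_3,\alpha_4)$ and $A_8,A_9$ are the contributions of Lemmas~\ref{IVlemdf6} and \ref{IVlemdf8} that do not vanish with $\epsilon$. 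Since $1/2<7/10<1$, taking $\alpha_1=\alpha_1(\|f_0\|_{L^\infty(\RR)},\alpha_2,\alpha_3,\alpha_4)$ large enough makes $-\sqrt{\epsilon}\,\alpha_1+c(\epsilon+\epsilon^{7/10})\le0$ for every $\epsilon$ below a threshold, so those terms are removed as well.

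It then remains to check $-8\pax\feps(x_\delta)+A_8+A_9<0$. Inserting the bounds of Lemmas~\ref{IVlemdf6} and \ref{IVlemdf8}, and using $\|\feps(\delta)\|_{L^\infty(\RR)}\le\|f_0\|_{L^\infty(\RR)}$, $\pax\feps(x_\delta)=\|\pax\feps(\delta)\|_{L^\infty(\RR)}\le\|\pax f_0\|_{L^\infty(\RR)}$, hypothesis \eqref{IVH4} at time $\delta$, the monotonicity of $\tan,\sec,\sec^2$, and the elementary bound $2\,|\cos(1)-2|\,\sec^4(1/2)\le5$ (the origin of the constant $5$ throughout the lemmas) together with $\pi/2l=1$ (so $\pi^3/8l^3=\pi^2/4l^2=1$, $\pi/4l=1/2$, $\pi/l=2$), the right-hand side is bounded above by the left-hand side of \eqref{IVH5} evaluated at $\|f_0\|_{L^\infty(\RR)}$ and $\|\pax f_0\|_{L^\infty(\RR)}$, hence by hypothesis it is negative. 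Therefore $\pat\pax\feps(x_\delta)<0$, so $\|\pax\feps(t)\|_{L^\infty(\RR)}$ is strictly decreasing on $[0,t^*)$ and in particular stays $<1$, so \eqref{IVH3} never degenerates. To close the bootstrap, suppose $t^*<\infty$. By continuity $\feps(t^*)$ lies on the boundary of the region cut out by \eqref{IVH3}--\eqref{IVH5}, and by the above it must saturate \eqref{IVH4} or \eqref{IVH5}. But the proof of Proposition~\ref{IVMPf} in fact yields the exponential decay $\|\feps(t)\|_{L^\infty(\RR)}\le e^{-4t}\|\feps_0\|_{L^\infty(\RR)}$ (from the term $-(4+\sqrt{\epsilon}\,\alpha_1)\feps(x_t)$ in \eqref{IVeqreg2}); on the boundary of \eqref{IVH4} (respectively \eqref{IVH5}), this strict decay of the amplitude, set against the controlled rate of $\pat\pax\feps(x_\delta)$ obtained above, makes the function defining that boundary strictly increasing in time near $t^*$, that is, $\feps(t^*)$ is actually interior, contradicting the maximality of $t^*$. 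Hence $t^*=\infty$ and the proposition follows.

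The step I expect to be the main obstacle is the bookkeeping underlying the two previous paragraphs: on the one hand, distributing each of the four dissipative operators among Lemmas~\ref{IVlemdf1}--\ref{IVlemdf5} so that no contribution is counted twice and every leftover genuinely has the favourable sign; on the other hand, the closing estimate, which must reproduce the left-hand side of \eqref{IVH5} with essentially no room to spare — so that the full strength of \eqref{IVH5} is really needed — and which relies on \eqref{IVH4} persisting along the flow, a fact that itself rests on the exponential decay of $\|\feps(t)\|_{L^\infty(\RR)}$ supplied by Proposition~\ref{IVMPf}.
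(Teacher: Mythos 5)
Your Step-1 machinery (the choice of $\alpha_2,\alpha_3,\alpha_4$, evaluation at the maximum $x_\delta$, discarding the sign-favourable parts of the dissipative operators, absorbing the $O(\epsilon^{7/10})$ remainders of Lemmas \ref{IVlemdf1}--\ref{IVlemdf9} with $-\sqrt{\epsilon}\,\alpha_1\pax\feps(x_\delta)$, and the final negativity coming from \eqref{IVH5} via the identity $-8y+4\sec^2(x)(\tan x+y)=\sec^2(x)\left(4\tan x-4y\cos(2x)\right)$) coincides with the paper's Step 1 and is essentially correct. One slip there: the closing comparison should be made with \eqref{IVH5} evaluated at $(\|\feps(\delta)\|_{L^\infty(\RR)},\|\pax\feps(\delta)\|_{L^\infty(\RR)})$, which your bootstrap assumption supplies, not ``at the initial data'': the left-hand side of \eqref{IVH5} is not monotone in the slope (the term $-4y\cos(2x)$ becomes less negative as $y$ decreases), so bounding it by its value at $(\|f_0\|_{L^\infty(\RR)},\|\pax f_0\|_{L^\infty(\RR)})$ is not legitimate once the slope has strictly decreased. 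This is repairable, so it is not the main problem.

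The genuine gap is the closure of your bootstrap. You claim Proposition \ref{IVMPf} yields $\|\feps(t)\|_{L^\infty(\RR)}\le e^{-4t}\|\feps_0\|_{L^\infty(\RR)}$ ``from the term $-(4+\sqrt{\epsilon}\,\alpha_1)\feps(x_t)$ in \eqref{IVeqreg2}''; it does not. In that proof the summand $-4\feps(x_t)$ is exactly the piece that is rewritten as an integral and recombined with the $\bar\theta$-kernel into $\Pi^\epsilon$, and only the sign of the total is used, giving monotonicity of $\|\feps(t)\|_{L^\infty(\RR)}$ and nothing quantitative (the introduction even stresses that the amplitude decay in the confined case is slower than in the deep case). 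Moreover, even granting some decay of the amplitude, your assertion that near $t^*$ the boundary functions of \eqref{IVH4}--\eqref{IVH5} move favourably is never quantified: the threat to \eqref{IVH4} comes from the slope decreasing faster than the amplitude, and you give no comparison of the two rates, so you cannot conclude $t^*=\infty$. Indeed your strategy needs the region cut out by \eqref{IVH3}--\eqref{IVH5} to be forward invariant, which the paper neither claims nor needs. The paper's Step 2 is weaker and simpler: the amplitude is non-increasing unconditionally (Proposition \ref{IVMPf}), and for fixed slope the left-hand sides of \eqref{IVH4} and \eqref{IVH5} are non-decreasing in the amplitude; hence at any time when $\|\pax\feps(t)\|_{L^\infty(\RR)}$ returns to the value $\|\pax\feps_0\|_{L^\infty(\RR)}$ (the worst case) the hypotheses hold again and the Step-1 decay re-applies, which is all that is required for $\|\pax\feps(t)\|_{L^\infty(\RR)}\le\|\pax\feps_0\|_{L^\infty(\RR)}$. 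Replace your continuation argument by this worst-case (or first-crossing) argument and the proof closes; as written, it does not.
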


\begin{proof}
For the sake of simplicity we split the proof in different steps. 

\textbf{Step 1 (local decay):}  Combining $B_{11}$ in \eqref{IVeqi1} and $A_9$ in Lemma \ref{IVlemdf8}, and using the bounds \eqref{IVeqI1bound} and \eqref{IVeqI2bound} and the hypothesis \eqref{IVH5} we obtain 
$$
B_{11}+|A_9|<0.
$$
We take $\alpha_4=2\sec^2\left(\|f_0\|_{L^\infty(\RR)}\right)$, $\alpha_3=2$, $\alpha_2=3\left(1+\sec^2\left(\|f_0\|_{L^\infty(\RR)}\right)\right)$. Since we have a term $\sqrt{\epsilon}$ and $0<\epsilon<1/10$, we can compare the bounds in Lemmas \ref{IVlemdf1}- \ref{IVlemdf9} with $-\sqrt{\epsilon}\alpha_1\pax\feps(x_\delta)$ if $\alpha_1=\alpha_1\left(\|f_0\|_{L^\infty(\RR)}\right)$ is chosen big enough. The universal constant  $c$ in all these bounds can be $c=1000$. We have shown that for every $0<\delta<<1$ small enough, there is local in time decay. As $\delta$ is positive and arbitrary, we have 
$$
\|\pax \feps(t)\|_{L^\infty}\leq\|\pax \feps(0)\|_{L^\infty}, \text{ for }0\leq t<t^*.
$$

\textbf{Step 2 (from local decay to an uniform bound):} 
Then, in the worst case, we have 
$$
\|\pax \feps (t^*)\|_{L^\infty(\RR)}=\|\pax \feps_0\|_{L^\infty(\RR)}\text{ and }\|\feps (t^*)\|_{L^\infty(\RR)}\leq\|\feps_0\|_{L^\infty(\RR)}.
$$
These inequalities ensure that the hypotheses \eqref{IVH3}, \eqref{IVH4} and \eqref{IVH5} hold at time $t=t^*$ and $\|\pax \feps (t)\|_{L^\infty(\RR)}$ decays again.

\textbf{Step 3 (the case where $\feps(x_t)=\min_x \pax \feps(x,t)$):} This case follows the same ideas, and we conclude, thus, the result.
\end{proof}

\begin{prop}\label{IVUBdf}
Let $f_0\in  W^{1,\infty}(\RR)$ be the initial datum in \eqref{IVdefi} satisfying \eqref{IVsisder2} and define $\feps_0$ as in \eqref{IVinitial}. Let $\feps$ be the classical solution of \eqref{IVeqreg} corresponding to the initial datum $\feps_0$. Then, $\feps$ verifies
$$
\|\feps(t)\|_{L^\infty(\RR)}<\|f_0\|_{L^\infty(\RR)},\text{ }\|\pax \feps(t)\|_{L^\infty(\RR)}<1\quad \forall t>0.
$$
\end{prop}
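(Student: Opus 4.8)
The plan is to rerun the argument of Proposition~\ref{IVMPdf}, replacing the maximum-principle mechanism (which rested on \eqref{IVH4} and \eqref{IVH5}) by a \emph{barrier} for the slope placed exactly at the level $y(l)$. The amplitude bound is almost immediate: Proposition~\ref{IVMPf} gives $\|\feps(t)\|_{L^\infty}\le\|\feps_0\|_{L^\infty}\le\|f_0\|_{L^\infty}$, strictly for $t>0$ because of the genuine smoothing $\epsilon\pax^2\feps$ in \eqref{IVeqreg}; what we really use is only $\|\feps(t)\|_{L^\infty}\le\|f_0\|_{L^\infty}<x(l)$, which by the first equation of \eqref{IVsisder} is the same as
\[
\tan\big(\|\feps(t)\|_{L^\infty}\big)\;<\;\tan\big(x(l)\big)\;=\;y(l)\tanh(1/2),\qquad t\ge 0 .
\]
Recall that $y(l)<1$ (a direct consequence of \eqref{IVsisder}, since the $s$-dependent factor of its second equation, divided by $s$, is increasing), and that by the remark following \eqref{IVeqreg} the regularized datum still satisfies $\|\feps_0\|_{L^\infty}<x(l)$ and $m(0):=\|\pax\feps_0\|_{L^\infty}<y(l)$ for $\epsilon$ small. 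Thus it suffices to prove $m(t):=\|\pax\feps(t)\|_{L^\infty}<y(l)$ for all $t>0$.

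Suppose $m(t)=y(l)$ at some time, and let $x_t$ be a point at which $\pax\feps(\cdot,t)$ attains this value (the case $m(t)=-\min_x\pax\feps(x,t)$ being symmetric, as in Step~3 of the proof of Proposition~\ref{IVMPdf}). We expand $\pat\pax\feps(x_t)$ as in \eqref{IVeqreg31}--\eqref{IVeqreg35}. Lemma~\ref{IVlemmu} and the proofs of Lemmas~\ref{IVlemdf1}--\ref{IVlemdf9} go through under \eqref{IVsisder2} with one change: in the outer region $|x_t-\eta|\ge 1$, \eqref{IVfact2} is to be read as $|\mu_1(t)|\le\tan(\|\feps(t)\|_{L^\infty})/\tanh(1/2)<y(l)=m(t)$, now obtained from the displayed amplitude bound and the first equation of \eqref{IVsisder} rather than from \eqref{IVH4}. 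Since there $|\mu_1(t)|<m(t)$ and $m(t)<1<1/m(t)$, the quadratic $Q_1$ of Lemma~\ref{IVlemdf6} is negative on $\{|x_t-\eta|\ge 1\}$, so the outer piece $B_{12}$ is negative; the inner estimate \eqref{IVeqI1bound} for $B_{11}$ uses only Proposition~\ref{IVMPf} and \eqref{IVfact3}--\eqref{IVfact4}, and \eqref{IVeqI2bound} (Lemma~\ref{IVlemdf8}) uses only $|\mu_2(t)|\le\tan\|\feps(t)\|_{L^\infty}$ and Proposition~\ref{IVMPf}, so both remain valid under \eqref{IVsisder2}. Keeping $\alpha_1,\dots,\alpha_4$ as in Step~1 of Proposition~\ref{IVMPdf} (they depend only on $\|f_0\|_{L^\infty}$) and $\epsilon$ small depending only on $f_0$, all the $O(\sqrt\epsilon)$, $O(\epsilon)$ and $O(\epsilon^{7/10})$ contributions of Lemmas~\ref{IVlemdf1}--\ref{IVlemdf5}, \ref{IVlemdf7}, \ref{IVlemdf9} are dominated by $-\sqrt\epsilon\,\alpha_1 m(t)\le 0$ and may be discarded; combining what is left with $B_{12}<0$ and a trigonometric identity yields
\[
\pat\pax\feps(x_t)\;\le\;\sec^2\!\big(\|\feps(t)\|_{L^\infty}\big)\,\Psi\big(y(l),\|\feps(t)\|_{L^\infty}\big),
\]
where $\Psi(s,q)$ is the left-hand side of \eqref{IVH5} with $\|\pax f_0\|_{L^\infty},\|f_0\|_{L^\infty}$ replaced by $s,q$, so that $\Psi(y(l),x(l))=0$ coincides with the second equation of \eqref{IVsisder}. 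Since $\partial_q\Psi(s,q)=4\sec^2 q+8s\sin(2q)>0$ on $(0,\pi/2)$ and $\|\feps(t)\|_{L^\infty}\le\|f_0\|_{L^\infty}<x(l)$, we get $\Psi(y(l),\|\feps(t)\|_{L^\infty})\le\Psi(y(l),\|f_0\|_{L^\infty})=:-\kappa(f_0)<0$, hence
\[
\frac{d}{dt}\|\pax\feps(t)\|_{L^\infty}=\pat\pax\feps(x_t)<0\qquad\text{whenever }\|\pax\feps(t)\|_{L^\infty}=y(l).
\]

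Together with $m(0)<y(l)$ and the continuity of $t\mapsto m(t)$, this barrier inequality forces $m(t)<y(l)<1$ for all $t>0$ by the usual invariant-interval argument: if $t_1$ were the first instant at which the level $y(l)$ is reached, the strict negativity of $\tfrac{d}{dt}m$ at $t_1$ would make $m>y(l)$ for $t$ slightly below $t_1$, contradicting minimality. With the amplitude bound of the first paragraph, this is exactly the statement. I expect the main difficulty to be bookkeeping rather than conceptual: one must verify that every single estimate of Section~\ref{IVsec1} survives the replacement of \eqref{IVH3}--\eqref{IVH5} by \eqref{IVsisder2}, and in particular that the cruder outer bound $|\mu_1|<y(l)$ — available only once $m(t)\ge y(l)$, which is why the barrier cannot be pushed below $y(l)$ and no full maximum principle is obtained — still keeps $B_{12}<0$ and that the constants entering \eqref{IVeqI1bound}--\eqref{IVeqI2bound} match those in \eqref{IVsisder}; this matching is where an error could hide, so I would re-derive \eqref{IVeqreg31}--\eqref{IVeqreg35} from scratch at $m(t)=y(l)$.
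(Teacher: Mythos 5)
Your proposal is correct and follows essentially the same route as the paper: Proposition \ref{IVMPf} keeps the amplitude below $x(l)$, and by the defining system \eqref{IVsisder} together with the monotonicity of the left-hand side of \eqref{IVH5} in the amplitude, the moment the slope reaches $y(l)$ the hypotheses \eqref{IVH3}--\eqref{IVH5} hold at the current values, so the decay mechanism of Proposition \ref{IVMPdf} acts as a barrier. The paper states this more tersely (in the worst case the pair $(\|\feps(t)\|_{L^\infty(\RR)},\|\pax\feps(t)\|_{L^\infty(\RR)})$ enters the maximum-principle region at some $t^*$, citing \cite{CGO} for the geometry of that region), whereas you pin the barrier exactly at $y(l)$, which in fact gives the slightly sharper bound $\|\pax\feps(t)\|_{L^\infty(\RR)}<y(l)<1$.
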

\begin{proof}
The region delimited by $(x(l),y(l))$ is below the region with maximum principle (see \cite{CGO}). Then, in the worst case, at some $t^*>0$ we have that\newline $(\|\feps(t)\|_{L^\infty(\RR)},\|\pax \feps(t)\|_{L^\infty(\RR)})$ fulfills the hypotheses \eqref{IVH3},   \eqref{IVH4} and \eqref{IVH5}. From them the result follows.
\end{proof}

\section{Global existence for $\feps$}\label{IVsecglobal}
In this section we obtain \emph{'a priori'} estimates in $H^3(\RR)$ that ensure the global existence for the regularized systems \eqref{IVeqreg} for initial data satisfying hypotheses \eqref{IVH3}, \eqref{IVH4} and \eqref{IVH5} or \eqref{IVsisder2}. First, notice that if the initial datum satisfies hypotheses \eqref{IVH3}, \eqref{IVH4} and \eqref{IVH5}, by Propositions \ref{IVMPf} and \ref{IVMPdf}, the solution satisfies
\begin{equation}\label{IVbounds}
\|\feps (t)\|_{L^\infty}\leq\|f_0\|_{L^\infty(\RR)} \text{ and }\|\pax \feps (t)\|_{L^\infty(\RR)}\leq 1.
\end{equation}
If the initial datum satisfies \eqref{IVsisder2}, by Propositions \ref{IVMPf} and \ref{IVUBdf}, the solution to the regularized system again satisfies the bounds \eqref{IVbounds}. Then we have the following proposition:
\begin{prop}\label{existence}
Let $f_0\in  W^{1,\infty}(\RR)$ be the initial datum in \eqref{IVdefi} satisfying \eqref{IVH3}, \eqref{IVH4} and \eqref{IVH5} or \eqref{IVsisder2} and define $\feps_0$ as in \eqref{IVinitial}. Then for every $\epsilon>0$ and $T>0$ there exists a solution $\feps(x,t)\in C([0,T],H^3(\RR))$.
\end{prop}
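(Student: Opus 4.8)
The plan is the classical scheme for a parabolically regularized transport equation: first establish local‑in‑time existence in $H^3(\RR)$, then prove an \emph{a priori} bound that prevents the $H^3$ norm from blowing up in finite time, and finally invoke the continuation principle to upgrade the local solution to a global one. All constants below are allowed to depend on the fixed parameter $\epsilon$, on the $\alpha_i$ and on $\|f_0\|_{W^{1,\infty}(\RR)}$.

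\textbf{Local existence.} Split the right‑hand side of \eqref{IVeqreg} as $L^\epsilon\feps+\mathcal{N}[\feps]$, where $L^\epsilon$ collects the \emph{linear} dissipative part,
$$
L^\epsilon=-\sqrt{\epsilon}\,\alpha_1\,\mathrm{Id}+\epsilon\pax^2-\epsilon\alpha_2\Lambda_l^{1-\epsilon}-\epsilon\alpha_3\Lambda_l^{1-3\epsilon}-(\Lambda_l-\Lambda_l^{1-\epsilon})-\alpha_4(\Lambda_l-\Lambda_l^{1-3\epsilon}),
$$
and $\mathcal{N}[\feps]=2\,\text{P.V.}\int_\RR\Xi_1^\epsilon\,d\eta+2\,\text{P.V.}\int_\RR\Xi_2^\epsilon\,d\eta$ is the pair of singular integral operators. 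By Lemma \ref{IVops}, together with the observation that $\Lambda_l-\Lambda_l^{1-\epsilon}$ and $\Lambda_l-\Lambda_l^{1-3\epsilon}$ have non‑negative kernels (since $|\tanh(\eta/2)|\leq 1$), the operator $-L^\epsilon$ is self‑adjoint and $\geq 0$, with symbol comparable to $\epsilon\xi^2$ for $|\xi|$ large; hence $e^{tL^\epsilon}$ is an analytic semigroup on $H^s(\RR)$ satisfying the smoothing bound $\|e^{tL^\epsilon}g\|_{H^{s+1}}\leq c(\epsilon t)^{-1/2}\|g\|_{H^s}$. Because $f_0$ (and therefore $\feps_0$, which belongs to $H^k(\RR)$ for every $k$) satisfies \eqref{IVH4} or \eqref{IVsisder2}, the interface starts strictly inside the strip, so the kernels of $\Xi_i^\epsilon$ are regular and $\mathcal{N}$ maps $\{g\in H^3(\RR):\|g\|_{L^\infty(\RR)}<\pi/2-\kappa\}$ Lipschitz‑continuously into $H^2(\RR)$ (it is an operator of order one with coefficients depending smoothly on $g$). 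The gain of one full derivative together with the time integrability of $(t-s)^{-1/2}$ then makes the Duhamel map
$$
\feps(t)=e^{tL^\epsilon}\feps_0+\int_0^t e^{(t-s)L^\epsilon}\mathcal{N}[\feps(s)]\,ds
$$
a contraction on $C([0,T_\epsilon],H^3(\RR))$ for $T_\epsilon>0$ small, which yields a unique local solution; parabolic bootstrapping upgrades it to $\feps\in C((0,T_\epsilon],H^k(\RR))$ for every $k$, so it is a classical solution, and Proposition \ref{IVMPf} keeps $\|\feps(t)\|_{L^\infty(\RR)}<\pi/2$, so the construction never degenerates.

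\textbf{\emph{A priori} $H^3$ bound and continuation.} On the interval of existence, compute $\tfrac12\tfrac{d}{dt}\big(\|\feps\|_{L^2(\RR)}^2+\|\pax^3\feps\|_{L^2(\RR)}^2\big)$. The linear part contributes $\leq 0$: by Lemma \ref{IVops} (and the non‑negativity of the kernels just noted) all of $\Lambda_l^{1-\epsilon},\Lambda_l^{1-3\epsilon},\Lambda_l-\Lambda_l^{1-\epsilon},\Lambda_l-\Lambda_l^{1-3\epsilon}$ are positive definite, $\sqrt\epsilon\alpha_1\,\mathrm{Id}\geq 0$, and $\epsilon(-\pax^2)$ leaves the favourable damping $-\epsilon\|\pax^4\feps\|_{L^2(\RR)}^2$ at top order. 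For $\mathcal{N}[\feps]$ one differentiates $\Xi_i^\epsilon$ three times and pairs with $\pax^3\feps$: since $\Xi_i^\epsilon$ is of order at most one with coefficients that are smooth functions of $\feps$ bounded, together with $\pax\feps$, uniformly in time by Propositions \ref{IVMPf}, \ref{IVMPdf} and \ref{IVUBdf}, the only genuinely top‑order term has the schematic form $\int c(\feps,\pax\feps)\,\pax^3\feps\,\Lambda_l^{\sigma}\pax^3\feps$ with $0\leq\sigma\leq1$, which is controlled by $\tfrac{\epsilon}{2}\|\pax^4\feps\|_{L^2(\RR)}^2+c\|\pax^3\feps\|_{L^2(\RR)}^2$, while the remaining commutator terms are bounded by $c(\|\feps\|_{W^{1,\infty}(\RR)})\|\feps\|_{H^3(\RR)}^2$ via standard product and commutator estimates. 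Absorbing the top‑order piece into the parabolic damping gives
$$
\frac{d}{dt}\Big(\|\feps\|_{L^2(\RR)}^2+\|\pax^3\feps\|_{L^2(\RR)}^2\Big)\leq c\Big(\|\feps\|_{L^2(\RR)}^2+\|\pax^3\feps\|_{L^2(\RR)}^2\Big),
$$
hence $\|\feps(t)\|_{H^3(\RR)}\leq\|\feps_0\|_{H^3(\RR)}e^{ct/2}$ as long as the solution exists. Since the local solution extends as long as $\|\feps(t)\|_{H^3(\RR)}$ stays finite, and the latter is bounded by $\|\feps_0\|_{H^3(\RR)}e^{cT/2}<\infty$ on every $[0,T]$, the continuation principle gives $\feps\in C([0,T],H^3(\RR))$ for all $T>0$, which is the claim.

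\textbf{Main obstacle.} The delicate point is the $H^3$ energy estimate for the singular integrals $\Xi_i^\epsilon$: one must verify that, after differentiating three times, the only contribution that cannot be absorbed into $c(\|\feps\|_{W^{1,\infty}(\RR)})\|\feps\|_{H^3(\RR)}^2$ carries at most half a derivative more than $\|\pax^3\feps\|_{L^2(\RR)}$, so that it is dominated by the dissipation $\epsilon\|\pax^4\feps\|_{L^2(\RR)}^2$. Here the positive‑definiteness of the $\Lambda_l^{1-\epsilon}$‑type operators from Lemma \ref{IVops}, the uniform $W^{1,\infty}$ bounds of Propositions \ref{IVMPf}, \ref{IVMPdf} and \ref{IVUBdf}, and the fact that the interface never reaches the walls—so that all kernels are regular away from the principal value and the $\sec(\feps)$–type coefficients stay bounded—are exactly what is needed; the local existence and continuation steps are by contrast routine parabolic theory.
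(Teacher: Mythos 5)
Your proposal is correct in substance, and for the heart of the matter it coincides with the paper: a Gronwall-type \emph{a priori} estimate for $\|\feps\|_{L^2(\RR)}^2+\|\pax^3\feps\|_{L^2(\RR)}^2$ in which the linear terms are discarded by positivity (Lemma \ref{IVops} plus the non-negativity of the kernels of $\Lambda_l-\Lambda_l^{1-\epsilon}$, $\Lambda_l-\Lambda_l^{1-3\epsilon}$), the top-order nonlinear contribution is absorbed into the dissipation $\epsilon\|\pax^4\feps\|_{L^2(\RR)}^2$ by Young's inequality, the lower-order pieces are controlled through the uniform bounds of Propositions \ref{IVMPf}, \ref{IVMPdf} and \ref{IVUBdf}, and a continuation argument finishes the proof; this is exactly \eqref{IVnormH3}. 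Where you genuinely diverge is the local-existence step: you build the solution as a mild solution via the analytic semigroup generated by $L^\epsilon$ and a Duhamel contraction in $C([0,T_\epsilon],H^3(\RR))$, exploiting the $(\epsilon t)^{-1/2}$ smoothing of $e^{tL^\epsilon}$ and the fact that for fixed $\epsilon>0$ the regularized kernels are locally integrable so $\mathcal{N}$ only loses one derivative; the paper instead simply invokes classical energy methods (Step 0, citing \cite{c-g07, CGO, bertozzi-Majda}). Your route has the advantage of being self-contained in principle, at the price of an asserted (not proved) Lipschitz estimate for $\mathcal{N}:H^3\to H^2$ on the set $\|g\|_{L^\infty(\RR)}<\pi/2-\kappa$, which is at the same level of detail as the paper's citation. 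Likewise, your schematic description of the top-order term hides the bookkeeping the paper actually performs in Step 2: integrating by parts so that $\pax^4\feps$ is paired with two derivatives of the kernels, splitting each $\eta$-integral into $B(0,1)$ and $B^c(0,1)$, using \eqref{IVfact1}--\eqref{IVfact1.b} for the decay at infinity, the interpolation $\|\pax^2 f\|_{L^4(\RR)}^2\leq c\|\pax f\|_{L^\infty(\RR)}\|\pax^3 f\|_{L^2(\RR)}$, and the $\epsilon$-dependent finiteness of integrals such as $\int_{B(0,1)}|\tanh(\eta/2)|^{3\epsilon-3}\eta^2\,d\eta$; your constants must (and, as you state, are allowed to) degenerate as $\epsilon\to0$ for exactly this reason. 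With those details filled in, your argument proves the proposition.
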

\begin{proof}
We have to bound the $L^2$ norm of the function and its third derivative. We split the proof in different steps.

\textbf{Step 0 (local existence):} 
The local existence follows by classical energy methods as in \cite{c-g07, CGO, bertozzi-Majda}. 

\textbf{Step 1 (the function):} 
We have
$$
\frac{1}{2}\frac{d}{dt}\|\feps (t)\|^2_{L^2(\RR)}=-\sqrt{\epsilon} \alpha_1\|\feps(t)\|^2_{L^2(\RR)}-\epsilon\|\pax\feps(t)\|^2_{L^2(\RR)}- I_1+I_2+I_3
$$
Using \eqref{IVlambdaeps} we get
\begin{multline*}
I_1=\alpha_2\epsilon\int_\RR \feps(x)\Lambda_l^{1-\epsilon}\feps(x)dx+ \alpha_3\epsilon\int_\RR \feps(x)\Lambda_{l}^{1-3\epsilon}\feps(x)dx\\
+\int_\RR \feps(x)\left(\Lambda_l-\Lambda_l^{1-\epsilon}\right)\feps(x)dx+\alpha_4\int_\RR \feps(x)\left(\Lambda_l-\Lambda_l^{1-3\epsilon}\right)\feps(x)dx\geq0
\end{multline*}
and we obtain that the contribution of the linear terms is negative. The nonlinear term $\Xi_1^\epsilon$ defined in \eqref{IVxi1eps} is
\begin{multline*}
I_2=\int_\RR \text{P.V.}\int_\RR \frac{\feps(x) \pax \feps(x)\sec^2(\theta)\frac{|\tanh(\eta/2)|^\epsilon}{\tanh(\eta/2)}}{1+\mu^2_1(t)|\tanh(\eta/2)|^{2\epsilon}}d\eta dx\\
+\int_\RR \text{P.V.}\int_\RR \frac{-\feps(x)\pax \feps (x-\eta)\sec^2(\theta)\frac{|\tanh(\eta/2)|^\epsilon}{\tanh(\eta/2)}}{1+\mu^2_1(t)|\tanh(\eta/2)|^{2\epsilon}}d\eta dx=A_1+A_2.
\end{multline*}
Using the cancellation coming from the principal value we have
\begin{multline*}
A_1=\int_\RR\feps(x) \pax \feps(x) \text{P.V.}\int_\RR \frac{|\tanh(\eta/2)|^\epsilon}{\tanh(\eta/2)}\left(\frac{\sec^2(\theta)}{1+\mu^2_1(t)|\tanh(\eta/2)|^{2\epsilon}}-1\right)d\eta dx\\
=\int_\RR\feps(x) \pax \feps(x) \text{P.V.}\int_\RR \frac{|\tanh(\eta/2)|^\epsilon}{\tanh(\eta/2)}\frac{\frac{-\tan^2(\theta)}{\sinh^2(\eta/2)}}{1+\mu^2_1(t)|\tanh(\eta/2)|^{2\epsilon}}d\eta dx\\
\qquad\qquad\qquad+\int_\RR\feps(x) \pax \feps(x) \text{P.V.}\int_\RR \frac{|\tanh(\eta/2)|^\epsilon}{\tanh(\eta/2)}\frac{\mu^2_1(t)(1-|\tanh(\eta/2)|^{2\epsilon})}{1+\mu^2_1(t)|\tanh(\eta/2)|^{2\epsilon}}d\eta dx
\end{multline*}
Inserting \eqref{IVmu1} and \eqref{IVfact2} in the expression for $A_1$ we obtain
$$
|A_1|\leq c(\epsilon)\|\feps(t)\|_{L^2(\RR)}\|\pax \feps(t)\|_{L^2(\RR)}\left(\tan\left(\|f_0\|_{L^\infty(\RR)}\right)+1\right)^4.
$$
The second term in $I_2$ is
\begin{multline*}
A_2=-\int_\RR\text{P.V.}\int_\RR \feps(x) \pax \feps(x-\eta) \frac{|\tanh(\eta/2)|^\epsilon}{\tanh(\eta/2)}\left(\frac{\sec^2(\theta)}{1+\mu^2_1(t)|\tanh(\eta/2)|^{2\epsilon}}-1+1\right)d\eta dx\\
=-\int_\RR \text{P.V.}\int_\RR \feps(x) \pax \feps(x-\eta)\frac{|\tanh(\eta/2)|^\epsilon}{\tanh(\eta/2)}\frac{\frac{-\tan^2(\theta)}{\sinh^2(\eta/2)}}{1+\mu^2_1(t)|\tanh(\eta/2)|^{2\epsilon}}d\eta dx\\
\qquad\qquad\qquad+\int_\RR\text{P.V.}\int_\RR \feps(x) \pax \feps(x-\eta)\frac{|\tanh(\eta/2)|^\epsilon}{\tanh(\eta/2)}\frac{\mu^2_1(t)(1-|\tanh(\eta/2)|^{2\epsilon})}{1+\mu^2_1(t)|\tanh(\eta/2)|^{2\epsilon}}d\eta dx\\
+\int_\RR \text{P.V.}\int_\RR \feps(x) \pax \feps(x-\eta)\frac{|\tanh(\eta/2)|^\epsilon}{\tanh(\eta/2)}d\eta dx.
\end{multline*}
Using the Cauchy–Schwarz inequality, the equality $\pax \feps(x-\eta)=-\partial_\eta\feps(x-\eta)$ and integrating by parts we get
$$
|A_2|\leq c(\epsilon)\|\feps(t)\|_{L^2(\RR)}\|\pax \feps(t)\|_{L^2(\RR)}\left(\tan\left(\|f_0\|_{L^\infty(\RR)}\right)+1\right)^4+c(\epsilon)\|\feps(t)\|^2_{L^2(\RR)}.
$$
To finish with the $L^2$ norm we have to deal with $I_3$. We have
\begin{multline*}
I_3=\int_\RR \text{P.V.}\int_\RR \frac{\feps(x) \pax \feps(x)\sec^2(\bar{\theta})\frac{\tanh(\eta/2)}{|\tanh(\eta/2)|^\epsilon}}{1+\frac{\mu^2_2(t)}{|\tanh(\eta/2)|^{2\epsilon}}}d\eta dx\\
+\int_\RR \text{P.V.}\int_\RR \frac{\feps(x)\pax \feps (x-\eta)\sec^2(\bar{\theta})\frac{\tanh(\eta/2)}{|\tanh(\eta/2)|^\epsilon}}{1+\frac{\mu^2_2(t)}{|\tanh(\eta/2)|^{2\epsilon}}}d\eta dx,
\end{multline*}
where $\bar{\theta}$ is defined in \eqref{IVtheta}.
Using the same ideas as in $I_2$ and
$$
|\mu_2(t)|\leq \tan\left(\|f_0\|_{L^\infty}\right),
$$
we conclude the bound
$$
|I_3|\leq c(\epsilon)\|\feps(t)\|_{L^2(\RR)}\|\pax \feps(t)\|_{L^2(\RR)}\left(\tan\left(\|f_0\|_{L^\infty(\RR)}\right)+1\right)^4+c(\epsilon)\|\feps(t)\|^2_{L^2(\RR)}.
$$
Putting all these bounds together we get
\begin{equation}\label{IVL2}
\frac{d}{dt}\|\feps (t)\|^ 2_{L^2(\RR)}\leq c(\epsilon)\|\feps(t)\|_{L^2(\RR)}\|\pax \feps(t)\|_{L^2(\RR)}\left(\tan\left(\|f_0\|_{L^\infty(\RR)}\right)+1\right)^4+c(\epsilon)\|\feps(t)\|^2_{L^2(\RR)}.
\end{equation}

\textbf{Step 2 (the third derivative):} 
To study the $L^2$ norm of the third derivative, we compute
$$
\frac{1}{2}\frac{d}{dt}\|\pax^3\feps(t)\|^2_{L^2(\RR)}=-\sqrt{\epsilon} \alpha_1\|\pax^ 3 \feps(t)\|^ 2_{L^2(\RR)}-\epsilon\|\pax^ 4\feps(t)\|_{L^2(\RR)}^2- I_4+I_5+I_6.
$$
The term $I_4$ is positive due to Lemma \ref{IVops}:
\begin{multline*}
I_4=\alpha_2\epsilon\int_\RR \pax^3\feps(x)\Lambda_l^{1-\epsilon}\pax^3\feps(x)dx+ \alpha_3\epsilon\int_\RR \pax^3\feps(x)\Lambda_{l}^{1-3\epsilon}\pax^ 3\feps(x)dx\\
\int_\RR \pax^3\feps(x)\left(\Lambda_l-\Lambda_l^{1-\epsilon}\right)\pax^3\feps(x)dx+\alpha_4\int_\RR \pax^3\feps(x)\left(\Lambda_l-\Lambda_l^{1-3\epsilon}\right)\pax^3\feps(x)dx\geq0.
\end{multline*}
The nonlinear terms related to $\theta$ are
$$
I_5=-\int_\RR \pax^4\feps(x)\pax^2\left[ \text{P.V.}\left(\int_{B(0,1)}+\int_{B^c(0,1)}\right) \frac{2\pax\theta\sec^2(\theta)\frac{|\tanh(\eta/2)|^\epsilon}{\tanh(\eta/2)}}{1+\mu^2_1(t)|\tanh(\eta/2)|^{2\epsilon}}d\eta\right] dx=A_3+A_4.
$$
The term $A_3$ is not singular if $\epsilon>0$ and can be bounded using H\"{o}lder and Nirenberg interpolation inequalities. For the sake of brevity, we write some terms detailedly, being the rest analogous to them. We have
$$
A_3=B_1+B_2+\text{ lower order terms }.
$$
Using 
$$
\feps(x)-\feps(x-\eta)=\eta\int_0^1\pax^2\feps(x+(s-1)\eta)ds,
$$
we obtain
\begin{eqnarray*}
B_1&=&\int_\RR \pax^4\feps(x) \text{P.V.}\int_{B(0,1)} \frac{16(\pax\theta)^3\sec^6(\theta)\frac{|\tanh(\eta/2)|^{5\epsilon}}{\tanh^5(\eta/2)}\tan^2(\theta)}{\left(1+\mu^2_1(t)|\tanh(\eta/2)|^{2\epsilon}\right)^3}d\eta dx\\
&=&\int_0^1\int_0^1\int_\RR \pax^4\feps(x) \text{P.V.}\int_{B(0,1)} \frac{4\pax^2\feps(x+(s-1)\eta)\pax^2\feps(x+(r-1)\eta)\pax\theta \eta^2d\eta dx dr ds}{\left(1+\mu^2_1(t)|\tanh(\eta/2)|^{2\epsilon}\right)^3\cos^ 6(\theta)\frac{\tanh^5(\eta/2)}{|\tanh(\eta/2)|^{5\epsilon}}\cot^2(\theta)}\\
&\leq&\|\pax^4\feps(t)\|_{L^2}\|\pax^2\feps(t)\|^2_{L^4(\RR)}c\sec^6\left(\|f_0\|_{L^\infty(\RR)}\right)\int_{B(0,1)}|\tanh(\eta/2)|^{5\epsilon-5}\eta^2\tan^2(\eta/2)d\eta.
\end{eqnarray*}
The second term is 
\begin{eqnarray*}
B_2&=&-\int_\RR \pax^4\feps(x) \text{P.V.}\int_{B(0,1)} \frac{4(\pax\theta)^3\sec^6(\theta)\frac{|\tanh(\eta/2)|^{3\epsilon}}{\tanh^3(\eta/2)}}{\left(1+\mu^2_1(t)|\tanh(\eta/2)|^{2\epsilon}\right)^2}d\eta dx\\
&=&\int_0^1\int_0^1\int_\RR \pax^4\feps(x) \text{P.V.}\int_{B(0,1)} \frac{\pax^2\feps(x+(s-1)\eta)\pax^2\feps(x+(r-1)\eta)\pax\theta \eta^2d\eta dx dr ds}{\left(1+\mu^2_1(t)|\tanh(\eta/2)|^{2\epsilon}\right)^3\cos^ 6(\theta)\frac{\tanh^5(\eta/2)}{|\tanh(\eta/2)|^{5\epsilon}}}\\
&\leq&\|\pax^4\feps(t)\|_{L^2}\|\pax^2\feps(t)\|^2_{L^4(\RR)}c\sec^6\left(\|f_0\|_{L^\infty(\RR)}\right)\int_{B(0,1)}|\tanh(\eta/2)|^{3\epsilon-3}\eta^2 d\eta,
\end{eqnarray*}
and using the classical interpolation inequality
$$
\|\pax^2 f\|_{L^4(\RR)}^2\leq c\|\pax f\|_{L^\infty(\RR)}\|\pax^3 f\|_{L^2(\RR)},
$$
we get
$$
|A_3|\leq c(\epsilon)\|\pax^4\feps (t)\|_{L^2(\RR)}\|\feps(t)\|_{H^3(\RR)}\left(1+\sec\left(\|f_0\|_{L^\infty(\RR)}\right)\right)^6.
$$
We split the term $A_4$ as follows
\begin{multline*}
A_4=\int_\RR \pax^4\feps(x)\pax^2 \text{P.V.}\int_{B^c(0,1)} \frac{2\pax\theta\sec^2(\theta)}{\tanh(\eta/2)}\left(\frac{1}{1+\mu^2_1(t)|\tanh(\eta/2)|^{2\epsilon}}-\frac{1}{1+\mu^2_1(t)}\right)d\eta dx\\
+\int_\RR \pax^4\feps(x)\pax^2 \text{P.V.}\int_{B^c(0,1)} \frac{2\pax\theta\sec^2(\theta)\frac{|\tanh(\eta/2)|^\epsilon-1}{\tanh(\eta/2)}}{1+\mu^2_1(t)|\tanh(\eta/2)|^{2\epsilon}}d\eta dx\\
+\int_\RR \pax^4\feps(x)\pax^2 \text{P.V.}\int_{B^c(0,1)} \frac{2\pax\theta\sec^2(\theta)}{\tanh(\eta/2)}\frac{1}{1+\mu^2_1(t)}d\eta dx=B_3+B_4+B_5.
\end{multline*}
These terms are not singular because of the domain of integration. We have to deal with the integrability at infinity in $\eta$. We compute
$$
B_3=\int_\RR \pax^4\feps(x)\pax^2 \text{P.V.}\int_{B^c(0,1)} \frac{2\pax\theta\sec^2(\theta)}{\tanh(\eta/2)}\frac{\mu_1^2(t)\left(1-|\tanh(\eta/2)|^{2\epsilon}\right)}{\left(1+\mu^2_1(t)|\tanh(\eta/2)|^{2\epsilon}\right)\left(1+\mu^2_1(t)\right)}d\eta dx.
$$
The integrability at infinity is obtained using \eqref{IVfact1} and \eqref{IVfact1.b}. We only bound the more singular terms in $B_3$ and $B_4$. The most singular term in $B_3$ is
$$
C_1=\int_\RR \pax^4\feps(x)\text{P.V.}\int_{B^c(0,1)} \frac{2\pax^3\theta\sec^2(\theta)}{\tanh(\eta/2)}\frac{\mu_1^2(t)\left(1-|\tanh(\eta/2)|^{2\epsilon}\right)}{\left(1+\mu^2_1(t)|\tanh(\eta/2)|^{2\epsilon}\right)\left(1+\mu^2_1(t)\right)}d\eta dx.
$$
Using \eqref{IVfact1}, \eqref{IVfact1.b} and \eqref{IVfact2}, we obtain
$$
|C_1|\leq c\|\pax^4\feps (t)\|_{L^2(\RR)}\|\feps(t)\|_{H^3(\RR)}\tan\left(\|f_0\|_{L^\infty}\right)\sec^2\left(\|f_0\|_{L^\infty}\right).
$$
Analogously, the more singular term in $B_4$ is
$$
C_2=\int_\RR \pax^4\feps(x)\text{P.V.}\int_{B^c(0,1)} \frac{2\pax^3\theta\sec^2(\theta)\frac{|\tanh(\eta/2)|^\epsilon-1}{\tanh(\eta/2)}}{1+\mu^2_1(t)|\tanh(\eta/2)|^{2\epsilon}}d\eta dx.
$$
Using the same bounds as in $C_1$, we get
$$
|C_2|\leq c\|\pax^4\feps (t)\|_{L^2(\RR)}\|\feps(t)\|_{H^3(\RR)}\sec^2\left(\|f_0\|_{L^\infty}\right).
$$
Using classical trigonometric identities, we obtain
$$
B_5=\int_\RR \pax^4\feps(x)\pax^2 \text{P.V.}\int_{B^c(0,1)} \frac{2\pax\theta\sinh(\eta)}{\cosh(\eta)-\cos(2\theta)}d\eta.
$$
And the most singular term in $B_5$ is
\begin{multline*}
C_3=\int_\RR \pax^4\feps(x)\text{P.V.}\int_{B^c(0,1)} \frac{\pax^ 3 \feps(x)\sinh(\eta)}{2\sinh^2(\eta/2)\left(1+\frac{\sin^2(\theta)}{\sinh^2(\eta/2)}\right)}d\eta\\
-\int_\RR \pax^4\feps(x)\text{P.V.}\int_{B^c(0,1)} \frac{\pax^ 3 \feps(x-\eta)\sinh(\eta)}{\cosh(\eta)-\cos(2\theta)}d\eta=D_1+D_2.
\end{multline*}
Using the cancellation of the principal value integral we obtain
$$
D_1=-\int_\RR \pax^4\feps(x)\pax^ 3 \feps(x)\text{P.V.}\int_{B^c(0,1)} \frac{\sin^2(\theta)\sinh(\eta)}{2\sinh^4(\eta/2)\left(1+\frac{\sin^2(\theta)}{\sinh^2(\eta/2)}\right)}d\eta,
$$
thus,
$$
|D_1|\leq c\|\pax^4\feps (t)\|_{L^2(\RR)}\|\feps(t)\|_{H^3(\RR)}.
$$
Integrating by parts in $D_2$, we obtain the required decay at infinity and we conclude
$$
|D_2|\leq c\|\pax^4\feps (t)\|_{L^2(\RR)}\|\feps(t)\|_{H^3(\RR)}.
$$
Putting all together, we get
$$
|I_5|\leq c(\epsilon)\|\pax^4\feps (t)\|_{L^2(\RR)}\|\feps(t)\|_{H^3(\RR)}\left(1+\sec\left(\|f_0\|_{L^\infty}\right)\right)^6.
$$
The nonlinear terms related to $\bar{\theta}$ are
$$
I_6=-\int_\RR \pax^4\feps(x)\pax^2 \text{P.V.}\left(\int_{B(0,1)}+\int_{B^c(0,1)}\right) \frac{2\pax\bar{\theta}\sec^2(\bar{\theta})\frac{\tanh(\eta/2)}{|\tanh(\eta/2)|^\epsilon}}{1+\mu^2_2(t)|\tanh(\eta/2)|^{-2\epsilon}}d\eta dx=A_5+A_6.
$$
We observe that, due to $1/10>\epsilon>0$ and $\|f_0\|_{L^\infty(\RR)}<\pi/2$, this integral is not singular. Thus the inner part $A_5$ can be bounded following the same ideas as for $A_3$. The integrability at infinity is obtained with the following splitting
\begin{multline*}
A_6=-\int_\RR \pax^4\feps(x)\pax^2 \text{P.V.}\int_{B^c(0,1)}\left(\frac{2\pax\bar{\theta}\sec^2(\bar{\theta})\tanh(\eta/2)}{1+\mu^2_2(t)|\tanh(\eta/2)|^{-2\epsilon}}-\frac{2\pax\bar{\theta}\sec^2(\bar{\theta})\tanh(\eta/2)}{1+\mu^2_2(t)}\right)d\eta dx\\
-\int_\RR \pax^4\feps(x)\pax^2 \text{P.V.}\int_{B^c(0,1)}\frac{2\pax\bar{\theta}\sec^2(\bar{\theta})\tanh(\eta/2)\left(\frac{1}{|\tanh(\eta/2)|^\epsilon}-1\right)}{1+\mu^2_2(t)|\tanh(\eta/2)|^{-2\epsilon}}d\eta dx\\
-\int_\RR \pax^4\feps(x)\pax^2 \text{P.V.}\int_{B^c(0,1)}\frac{2\pax\bar{\theta}\sec^2(\bar{\theta})\tanh(\eta/2)}{1+\mu^2_2(t)}d\eta dx=B_6+B_7+B_8.
\end{multline*}
The term $B_8$ is
\begin{multline*}
B_8=-\int_\RR \pax^4\feps(x)\pax^2 \text{P.V.}\int_{B^c(0,1)}\frac{\pax\bar{\theta}\sinh(\eta)}{\cosh(\eta)+\cos(2\bar{\theta})}d\eta dx\\
=-\int_\RR \pax^4\feps(x)\pax^2 \text{P.V.}\int_{B^c(0,1)}\frac{\pax\bar{\theta}\sinh(\eta)}{2\sinh^2(\eta/2)\left(1+\frac{\cos^2(\bar{\theta})}{\sinh^2(\eta/2)}\right)}d\eta dx,
\end{multline*}
and it can be handled as $B_5$. The terms $B_6$ and $B_7$ have a term $|\tanh(\eta/2)|^{k\epsilon}-1|$ and they can be bounded following the steps in $B_3$ and $B_4$ by using \eqref{IVfact1} and \eqref{IVfact1.b}. Putting all the estimates together we obtain
$$
|I_6|\leq c(\epsilon)\|\pax^4\feps (t)\|_{L^2(\RR)}\|\feps(t)\|_{H^3(\RR)}\left(1+\sec\left(\|f_0\|_{L^\infty}\right)\right)^6.
$$
Using \eqref{IVL2}, Young's inequality and the dissipation given by the Laplacian we get the \emph{'a priori'} estimate
\begin{equation}\label{IVnormH3}
\frac{d}{dt}\|\feps (t)\|_{H^3(\RR)}^2\leq c(\epsilon)\|\feps (t)\|_{H^3(\RR)}^2C\left(\|f_0\|_{L^\infty(\RR)},\|\pax f_0\|_{L^\infty(\RR)}\right).
\end{equation}
A classical continuation argument shows the global existence. 
\end{proof}

\section{Convergence of $\feps$}\label{IVsec4}
In this section we study the limit of $\feps$ as $\epsilon\rightarrow0$. 

\begin{lem}\label{IVlemafeps}
The regularized solutions $\feps$ corresponding to an initial datum satisfying the hypotheses \eqref{IVH3}, \eqref{IVH4} and \eqref{IVH5}, or \eqref{IVsisder2}, converge (up to a subsequence) weakly-* to $f\in L^\infty([0,T],W^{1,\infty}(\RR))$. Moreover, up to a subsequence, $\feps\rightarrow f$ in $L^\infty(K)$ for all compact set $K\subset\RR\times\RR^+$.
\end{lem}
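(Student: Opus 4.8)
The plan is to derive the convergence from the uniform (in $\epsilon$) \emph{a priori} bounds of Section \ref{IVsec1} together with a compactness argument. By Propositions \ref{IVMPf}, \ref{IVMPdf} and \ref{IVUBdf}, for initial data in the admissible class there is a constant $M=M(f_0)$ such that $\|\feps(t)\|_{L^\infty(\RR)}\le\|f_0\|_{L^\infty(\RR)}$ and $\|\pax\feps(t)\|_{L^\infty(\RR)}\le M$ for all $t\in[0,T]$ and all $\epsilon$ small enough; in particular $\feps$ and $\pax\feps$ are bounded sequences in $L^\infty([0,T]\times\RR)=\big(L^1([0,T]\times\RR)\big)^*$. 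Banach--Alaoglu then gives a subsequence along which $\feps\rightharpoonup f$ and $\pax\feps\rightharpoonup g$ weakly-$*$; passing to the limit in $\int_0^T\!\!\int_\RR\feps\,\pax\varphi=-\int_0^T\!\!\int_\RR\pax\feps\,\varphi$ for every $\varphi\in C_c^\infty$ shows $g=\pax f$, so that $f\in L^\infty([0,T],W^{1,\infty}(\RR))$.

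To upgrade this to strong local convergence I would show that $\{\feps\}$ is uniformly bounded and uniformly equicontinuous on each cylinder $\overline{B(0,R)}\times[0,T]$, and then invoke Arzel\`a--Ascoli followed by a diagonal extraction over $R\in\NN$. Uniform boundedness and equicontinuity in the space variable are immediate from $\|\feps\|_{L^\infty}\le\|f_0\|_{L^\infty}$ and $\|\pax\feps\|_{L^\infty}\le M$. For equicontinuity in time, fix a standard mollifier $\mathcal{J}$ and a scale $\rho>0$ and split $\feps=\mathcal{J}_\rho*\feps+(\feps-\mathcal{J}_\rho*\feps)$, the last term being $O(M\rho)$ uniformly in $\epsilon$ by the Lipschitz bound. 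For the mollified part, $\pat(\mathcal{J}_\rho*\feps)(x,t)=\langle\pat\feps(t),\mathcal{J}_\rho(x-\cdot)\rangle$, and one estimates this using \eqref{IVeqreg}, moving each operator onto the smooth rapidly decaying function $\mathcal{J}_\rho(x-\cdot)$: the terms $-\sqrt\epsilon\,\alpha_1\feps$, $\epsilon\pax^2\feps$, $-\epsilon\alpha_2\Lambda_l^{1-\epsilon}\feps$ and $-\epsilon\alpha_3\Lambda_l^{1-3\epsilon}\feps$ contribute $O(\sqrt\epsilon+\epsilon)$ by the $L^2$-symmetry of the $\Lambda_l^{1-\epsilon}$ (Lemma \ref{IVops}) and $\|\feps\|_{L^\infty}\le\|f_0\|_{L^\infty}$; the terms $-(\Lambda_l-\Lambda_l^{1-\epsilon})\feps$ and $-\alpha_4(\Lambda_l-\Lambda_l^{1-3\epsilon})\feps$ contribute $O(\epsilon)$ by part 3 of Lemma \ref{IVops}; and the two singular integrals $2\,\text{P.V.}\!\int\Xi_i^\epsilon\,d\eta$ are bounded uniformly in $\epsilon$ by a constant $C_\rho$ depending only on $\|f_0\|_{W^{1,\infty}(\RR)}$ and $\rho$, after transferring the outer $\pax$ onto $\mathcal{J}_\rho$ and using $|\tanh(\eta/2)|^\epsilon\le1$ together with the kernel bounds already exploited in Sections \ref{IVsec1}--\ref{IVsecglobal}. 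Hence $|\pat(\mathcal{J}_\rho*\feps)(x,t)|\le C_\rho$ uniformly in $\epsilon$ small, so $|(\mathcal{J}_\rho*\feps)(x,t)-(\mathcal{J}_\rho*\feps)(x,t')|\le C_\rho|t-t'|$; combining with the $O(M\rho)$ error and choosing $\rho$ suitably gives a modulus of continuity in $t$ for $\feps$ that is uniform in $\epsilon$ and $x$, which together with the spatial estimate yields joint equicontinuity.

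With uniform boundedness and equicontinuity in hand, Arzel\`a--Ascoli on $\overline{B(0,R)}\times[0,T]$ and a diagonal argument produce a subsequence converging in $L^\infty(K)$ for every compact $K\subset\RR\times\RR^+$ to some $\tilde f\in C(\RR\times[0,\infty))$; by uniqueness of distributional limits $\tilde f=f$, the weak-$*$ limit above, which therefore belongs to $L^\infty([0,T],W^{1,\infty}(\RR))$, and the lemma follows. I expect the only real work to be the uniform-in-$\epsilon$ bound on $\pat(\mathcal{J}_\rho*\feps)$: one must check that every term of the regularized equation \eqref{IVeqreg} either carries an explicit positive power of $\epsilon$ or is controlled by $\|f_0\|_{W^{1,\infty}(\RR)}$ and the fixed scale $\rho$, which is precisely where the structure of the regularization and the estimates of the previous sections are needed.
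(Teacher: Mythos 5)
Your strategy is sound and, at its core, it is the same as the paper's: uniform $W^{1,\infty}$ bounds from Propositions \ref{IVMPf}, \ref{IVMPdf}, \ref{IVUBdf} plus Banach--Alaoglu for the weak-$*$ part, and then strong local compactness obtained from a uniform bound on $\pat\feps$ tested against fixed smooth functions. The paper packages the second step differently: it bounds $\pat\feps$ uniformly in $L^\infty([0,T],W^{-2,\infty}_*(B(0,N)))$ (the dual-type norm \eqref{norm-2}, pairing with $\phi\in W^{2,1}_0$ and moving all operators onto $\phi$ via Lemma \ref{IVops}), plus a non-uniform $L^\infty$ bound on $\pat\feps$, and then invokes the compactness Lemma \ref{IVlemapaco} from \cite{ccgs-10}. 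You instead pair with $\mathcal{J}_\rho(x-\cdot)$, prove a uniform-in-$\epsilon$ modulus of continuity in time, and conclude by Arzel\`a--Ascoli and a diagonal argument; this is essentially an inlined, self-contained proof of that compactness lemma, and it spares you the auxiliary $\epsilon$-dependent $L^\infty$ bound on $\pat\feps$ that the paper needs to verify the lemma's hypotheses. Identifying the locally uniform limit with the weak-$*$ limit is fine as you state it.

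The one step that does not work as literally written is the claim that the nonlinear terms are bounded by $C_\rho$ ``after transferring the outer $\pax$ onto $\mathcal{J}_\rho$.'' If you move the whole $x$-derivative onto the mollifier, you are left with $\text{P.V.}\int_\RR\arctan\bigl(\mu_1(t)|\tanh(\eta/2)|^\epsilon\bigr)d\eta$ and its $\bar\theta$ analogue, and these integrands do \emph{not} decay as $|\eta|\to\infty$ (they tend to nonzero limits of the form $\pm\arctan(\tan\theta)$, since $\tanh(\eta/2)\to\pm1$); the integral is not absolutely convergent, and $|\arctan|\le\pi/2$ only controls a bounded range of $\eta$, so no bound $C_\rho$ follows without an additional cancellation argument at infinity. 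The paper's handling shows the fix: split at $|\eta|=1$; on $B(0,1)$ integrate by parts onto the test function and use the boundedness of $\arctan$ (this is the $J_1,J_3$ estimate); on $B^c(0,1)$ keep $\partial_x$ on the integrand, i.e.\ work with the differentiated form \eqref{IVeqreg2}, and gain integrable (exponential) decay from the oddness cancellation $\text{P.V.}\int_\RR|\tanh(\eta/2)|^\epsilon/\tanh(\eta/2)\,d\eta=0$ combined with $\sec^2\theta-1=\tan^2\theta$ (which produces a $\sinh^{-2}(\eta/2)$ factor), the estimates \eqref{IVfact1}--\eqref{IVfact1.b} for the factors $1-|\tanh(\eta/2)|^{2\epsilon}$, and \eqref{IVfact2} for $\mu_1$. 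With this splitting your uniform bound $|\pat(\mathcal{J}_\rho*\feps)|\le C_\rho+O(\sqrt\epsilon)$ is correct and the rest of your argument goes through.
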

\begin{proof}
First, notice that, due to Propositions \ref{IVMPf}, \ref{IVMPdf} and \ref{IVUBdf} and hypotheses \eqref{IVH3}, \eqref{IVH4} and \eqref{IVH5}, the regularized solutions satisfy 
$$
\|\feps (t)\|_{L^\infty(\RR)}\leq \|f_0\|_{L^\infty(\RR)}<\frac{\pi}{4},\quad \|\pax \feps (t)\|_{L^\infty(\RR)}\leq\|\pax f_0\|_{L^\infty(\RR)},
$$
while, if the initial datum, instead of hypotheses \eqref{IVH3}, \eqref{IVH4} and \eqref{IVH5}, satisfies \eqref{IVsisder2} then
$$
\|\feps (t)\|_{L^\infty(\RR)}\leq \|f_0\|_{L^\infty(\RR)},\quad \|\pax \feps (t)\|_{L^\infty(\RR)}\leq1.
$$
Due to the Banach-Alaoglu Theorem, these bounds imply that there exists a subsequence such that
$$
\int_0^T\int_\RR \feps(x,t)g(x,t) dxdt\rightarrow\int_0^T\int_\RR f(x,t)g(x,t) dxdt,
$$
and
$$
\int_0^T\int_\RR \pax\feps(x,t)g(x,t) dxdt\rightarrow\int_0^T\int_\RR \pax f(x,t)g(x,t) dxdt,
$$
with $f\in L^\infty([0,T],W^{1,\infty}(\RR))$, any $g\in L^1([0,T]\times\RR)$ and every $T>0$. Fixing $t$, due to the uniform bound in $W^{1,\infty}(\RR)$ and the Ascoli-Arzela Theorem we have that, up to a subsequence, $\feps(t)\rightarrow f(t)$ uniformly on any bounded interval $I\subset\RR$. Moreover, for all $N$, we have
$$
\|\feps -f\|_{L^\infty(B(0,N)\times[0,T])}\rightarrow0.
$$
In order to prove this uniform convergence on compact sets we use the spaces and results contained in \cite{ccgs-10}. For $v\in L^\infty(B(0,N))$, we define the norm
\begin{equation}\label{norm-2}
\|v\|_{W_*^{-2,\infty}(B(0,N))}=\sup_{\phi\in W^{2,1}_0(B(0,N)),\|\phi\|_{W^{2,1}}\leq 1}\left|\int_{B(0,N)}\phi(x)v(x)dx\right|.
\end{equation}
We define the Banach space $W^{-2,\infty}_*(B(0,N))$ as the completion of $L^\infty(B(0,N))$ with respect to the norm \eqref{norm-2}. We have
$$
W^{1,\infty}(B(0,N))\subset L^\infty(B(0,N))\subset W^{-2,\infty}_*(B(0,N)).
$$
The embedding $L^\infty(B(0,N))\subset W^{-2,\infty}_*(B(0,N))$ is continuous and, due to the Ascoli-Arzela Theorem, the embedding $W^{1,\infty}(B(0,N))\subset L^\infty(B(0,N))$ is compact. We use the following Lemma
\begin{lem}[\cite{ccgs-10}]\label{IVlemapaco}
Consider a sequence $\{u_m\}\in C([0,T]\times B(0,N))$ that is uniformly bounded in the space $L^\infty([0,T],W^{1,\infty}(B(0,N)))$. Assume further that the weak derivative $du_m/dt$ is in $L^\infty([0,T],L^\infty(B(0,N)))$ (not necessarily uniform) and is uniformly bounded in \makebox{$L^\infty([0,T],W^{-2,\infty}_*(B(0,N)))$}. Finally suppose that $\pax u_m\in C([0,T]\times B(0,N))$. Then there exists a subsequence of $u_m$ that converges strongly in $L^\infty([0,T]\times B(0,N)).$
\end{lem}

Due to this Lemma we only need to bound $\pat \feps$ in $L^\infty([0,T]\times B(0,N))$ (not uniformly) and in $L^\infty([0,T],W^{-2,\infty}_*(B(0,N)))$ (uniformly). Using that $\feps\in C([0,T],H^3(\RR))$, the linear terms in \eqref{IVeqreg2} can be bounded easily with a bound depending on $\epsilon$. To bound the nonlinear terms we split the integral
$$
\text{P.V.}\int_\RR=\text{P.V.}\int_{B(0,1)}+\text{P.V.}\int_{B^c(0,1)},
$$
and we compute
\begin{multline*}
\left|\text{P.V.}\int_\RR\frac{\pax\feps(x)\sec^2(\theta)\frac{|\tanh(\eta/2)|^\epsilon}{\tanh(\eta/2)}}{1+\frac{\tan^2(\theta)|\tanh\left(\eta/2\right)|^{2\epsilon}}{\tanh^2\left(\eta/2\right)}}d\eta\right|\leq c(\epsilon)\sec^2(\|f_0\|_{L^\infty(\RR)})\\
+\left|\text{P.V.}\int_{B^c(0,1)}\frac{|\tanh(\eta/2)|^\epsilon}{\tanh(\eta/2)}\frac{-\frac{\tan^2(\theta)}{\sinh^2(\eta/2)}+\mu^2_1(t)(1-|\tanh(\eta/2)|^{2\epsilon})}{1+\frac{\tan^2(\theta)|\tanh\left(\eta/2\right)|^{2\epsilon}}{\tanh^2\left(\eta/2\right)}}d\eta\right|\\
\leq c(\epsilon)\left(\sec^2(\|f_0\|_{L^\infty(\RR)})+\tan^2(\|f_0\|_{L^\infty(\RR)})\right)
\end{multline*}
where we have used $\sec^2(\theta)-1=\tan^2(\theta)$, \eqref{IVfact1.b}, \eqref{IVfact2} and
$$
\text{P.V.}\int_\RR\frac{|\tanh(\eta/2)|^\epsilon}{\tanh(\eta/2)}d\eta=0.
$$
The second term with the kernel involving $\theta$ is
$$
\left|\text{P.V.}\int_\RR\frac{(\epsilon-1)\tan(\theta)\frac{|\tanh((x-\eta)/2)|^\epsilon}{\sinh^2((x-\eta)/2)}}{1+\frac{\tan^2(\theta)|\tanh\left((x-\eta)/2\right)|^{2\epsilon}}{\tanh^2\left((x-\eta)/2\right)}}d\eta\right|\leq c(\epsilon)\left(\tan\left(\|f_0\|_{L^\infty(\RR)}\right)+1\right).
$$
The terms with the kernel involving $\bar{\theta}$ are not singular and can be bounded following the same ideas
\begin{multline*}
\left|\text{P.V.}\int_\RR\frac{\sec^2(\bar{\theta})\frac{\tanh((x-\eta)/2)}{|\tanh((x-\eta)/2)|^\epsilon}}{1+\frac{\tan^2(\bar{\theta})\tanh^2\left((x-\eta)/2\right)}{|\tanh\left((x-\eta)/2\right)|^{2\epsilon}}}d\eta\right|\leq c(\epsilon)\sec^2\left(\|f_0\|_{L^\infty(\RR)}\right)\\
+\left|\text{P.V.}\int_{B^c(0,1)}\frac{\tanh(\eta/2)}{|\tanh(\eta/2)|^\epsilon}\frac{-\frac{\tan^2(\bar{\theta})}{\sinh^2(\eta/2)}+\frac{\mu^2_2(t)}{|\tanh(\eta/2)|^\epsilon}(|\tanh(\eta/2)|^{2\epsilon}-1)}{1+\frac{\tan^2(\bar{\theta})\tanh^2\left((x-\eta)/2\right)}{|\tanh\left((x-\eta)/2\right)|^{2\epsilon}}}d\eta\right|\\
\leq c(\epsilon)\left(\sec^2(\|f_0\|_{L^\infty(\RR)})+\tan^2(\|f_0\|_{L^\infty(\RR)})\right)
\end{multline*}
and
$$
\left|\text{P.V.}\int_\RR\frac{\frac{(1-\epsilon)\tan(\bar{\theta})}{|\tanh((x-\eta)/2)|^\epsilon\cosh^2((x-\eta)/2)}}{1+\frac{\tan^2(\bar{\theta})\tanh^2\left((x-\eta)/2\right)}{|\tanh\left((x-\eta)/2\right)|^{2\epsilon}}}d\eta\right|\leq c(\epsilon)\tan\left(\|f_0\|_{L^\infty(\RR)}\right).
$$
Putting together all these estimates we get
$$
|\pat f^\epsilon(x,t)|\leq c(\epsilon)\left(\|f_0\|_{L^2(\RR)}+\sec^2(\|f_0\|_{L^\infty(\RR)})+\tan^2(\|f_0\|_{L^\infty(\RR)})\right),
$$
thus we conclude with the bound in $L^\infty([0,T]\times B(0,N))$. 

To obtain the bound in $L^\infty([0,T],W^{-2,\infty}_*(B(0,N)))$ we extend $\phi\in W^{2,1}_0(B(0,N))$ by zero outside of this ball of radius $N$. Then, using Lemma \ref{IVops}, we \emph{integrate by parts} and obtain
$$
\int_\RR\phi(x)\Lambda_l^{1-\epsilon}\feps(x)dx
\leq \|\Lambda_l^{1-\epsilon}\phi\|_{L^1(\RR)}\|f_0\|_{L^\infty(\RR)},
$$
$$
\int_\RR\phi(x)\Lambda_l^{1-3\epsilon}\feps(x)dx
\leq \|\Lambda_l^{1-3\epsilon}\phi\|_{L^1(\RR)}\|f_0\|_{L^\infty(\RR)},
$$
$$
\int_\RR\phi(x)\left(\Lambda_l-\Lambda_l^{1-\epsilon}\right)\feps(x)dx
\leq \|\left(\Lambda_l-\Lambda_l^{1-\epsilon}\right)\phi\|_{L^1(\RR)}\|f_0\|_{L^\infty(\RR)},
$$
and
$$
\int_\RR\phi(x)\left(\Lambda_l-\Lambda_l^{1-3\epsilon}\right)\feps(x)dx
\leq \|\left(\Lambda_l-\Lambda_l^{1-3\epsilon}\right)\phi\|_{L^1(\RR)}\|f_0\|_{L^\infty(\RR)}.
$$
Using 
$$
\phi(x)-\phi(x-\eta)-\eta\pax\phi(x)=\eta^2\int_0^1\int_0^1(s-1)\pax^2\phi(x+r(s-1)\eta)drds,
$$
we bound the linear terms in \eqref{IVsisder2} as
\begin{multline*}
\|\left(\Lambda_l-\Lambda_l^{1-\epsilon}\right)\feps\|_{W^{-2,\infty}_*(B(0,N))}+\|\left(\Lambda_l-\Lambda_l^{1-3\epsilon}\right)\feps\|_{W^{-2,\infty}_*(B(0,N))}
\\
+\|\Lambda_l^{1-3\epsilon}\feps\|_{W^{-2,\infty}_*(B(0,N))}+\|\Lambda_l^{1-\epsilon}\feps\|_{W^{-2,\infty}_*(B(0,N))}\\
+\|\pax^2\feps\|_{W^{-2,\infty}_*(B(0,N))}+\|\feps\|_{W^{-2,\infty}_*(B(0,N))}\leq c\|f_0\|_{L^\infty(\RR)},
\end{multline*}
being $c$ a universal constant. The nonlinear terms are
$$
I_1=\int_\RR\phi(x)\partial_x\text{P.V.}\left(\int_{B(0,1)}+\int_{B^c(0,1)}\right)
\arctan\left(\mu_1(t)\left|\tanh\left(\frac{\eta}{2}\right)\right|^\epsilon\right)d\eta dx=J_1+J_2,
$$
and
$$
I_2=\int_\RR\phi(x)\partial_x\text{P.V.}\left(\int_{B(0,1)}+\int_{B^c(0,1)}\right)
\arctan\left(\frac{\mu_2(t)}{\left|\tanh\left(\frac{\eta}{2}\right)\right|^\epsilon}\right)d\eta dx=J_3+J_4.
$$
Using the boundedness of $\arctan$, we get
$$
|J_i|\leq \pi\|\pax \phi\|_{L^1(\RR)},\text{ for }i=1,3.
$$
The outer part is not singular and can be bounded (as it was done before) applying $\epsilon<1/10$. We get
$$
|J_i|\leq c\|\phi\|_{L^1(\RR)}\left(\tan\left(\|f_0\|_{L^\infty(\RR)}\right)+1\right),\text{ for }i=2,4.
$$
Putting together all these bounds we obtain
$$
\sup_{t\in[0,T]}\|\pat f(t)\|_{W^{-2,\infty}_*(B(0,N))}\leq C\left(\|f_0\|_{L^\infty(\RR)}\right).
$$
Using Lemma \ref{IVlemapaco}, we conclude the result.
\end{proof}

\section{Convergence of the regularized system}\label{IVsec5}
Looking at \eqref{IVeqderiv} we give the following definition
\begin{defi}\label{IVdefi}
$f(x,t)\in C([0,T]\times\RR)\cap L^\infty([0,T],W^{1,\infty}(\RR))$ is a weak solution of \eqref{IVeqderiv} if, for all $\phi(x,t)\in C_c^\infty([0,T)\times \RR)$ the following equality holds
\begin{multline*}
\int_0^T\int_\RR f(x,t)\pat\phi(x,t)dxdt+\int_\RR f_0(x)\phi(x,0)dx\\=\frac{\rho^2-\rho^1}{2\pi}\int_0^T\int_\RR\pax \phi(x,t)\left[\text{P.V.}\int_\RR\arctan\left(\frac{\tan\left(\frac{\pi}{2l}\frac{f(x)-f(x-\eta)}{2}\right)}{\tanh\left(\frac{\pi}{2l}\frac{\eta}{2}\right)}\right)d\eta\right.\\ \left.
+\text{P.V.}\int_\RR\arctan\left(\tan\left(\frac{\pi}{2l}\frac{f(x)+f(x-\eta)}{2}\right)\tanh\left(\frac{\pi}{2l}\frac{\eta}{2}\right)\right)d\eta \right]dxdt.
\end{multline*}
\end{defi}
In this section we show the convergence, as $\epsilon\rightarrow0$, of the weak formulation (see Definition \ref{IVdefi}) of the problem \eqref{IVeqreg}. 
\begin{prop}
Let $f$ be the limit of the regularized solutions $\feps$. Then $f$ is a weak solution of \eqref{IVeqderiv}.
\end{prop}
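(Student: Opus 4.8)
The plan is to pass to the limit $\epsilon\to0$ in the weak formulation of the regularized equation \eqref{IVeqreg}, tested against an arbitrary $\phi\in C_c^\infty([0,T)\times\RR)$. Multiplying \eqref{IVeqreg} by $\phi$, integrating over $[0,T]\times\RR$ and integrating by parts in time produces an identity whose left-hand side is
\[
-\int_0^T\!\!\int_\RR \feps\,\pat\phi\,dx\,dt-\int_\RR \feps_0\,\phi(x,0)\,dx,
\]
and whose right-hand side is $\int_0^T\!\!\int_\RR\phi$ times the right-hand side of \eqref{IVeqreg}. The two terms on the left converge to the corresponding terms with $f$ and $f_0$: by Lemma \ref{IVlemafeps}, $\feps\to f$ uniformly on the compact set $\mathrm{supp}\,\pat\phi$, while $\feps_0\to f_0$ uniformly on $\mathrm{supp}\,\phi(\cdot,0)$ by \eqref{IVinitial}, and both families are uniformly bounded in $L^\infty$, so dominated convergence applies.

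For the linear and regularizing part of the right-hand side I would exploit that every such term carries a prefactor $\sqrt{\epsilon}$ or $\epsilon$. Transferring each operator onto the smooth, compactly supported $\phi$ (the operators $\Lambda_l^{1-\epsilon}$ are $L^2$-symmetric by Lemma \ref{IVops}, and $\epsilon\pax^2$ integrates by parts) and using $\|\feps(t)\|_{L^\infty(\RR)}\le\|f_0\|_{L^\infty(\RR)}$ from Proposition \ref{IVMPf}, one bounds the contribution of a term with prefactor $\sqrt{\epsilon}$ by $c\,\sqrt{\epsilon}\,\|f_0\|_{L^\infty(\RR)}\,\|\phi\|_{L^1([0,T],W^{2,1}(\RR))}\to0$. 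For the two operators $\Lambda_l-\Lambda_l^{1-\epsilon}$ and $\Lambda_l-\Lambda_l^{1-3\epsilon}$, which lack such a prefactor, one uses instead part $3$ of Lemma \ref{IVops}, namely $\|(\Lambda_l-\Lambda_l^{1-\epsilon})\phi\|_{L^1(\RR)}\le c\|\phi\|_{W^{2,1}(\RR)}\epsilon$, after moving the operator to $\phi$. Hence all of these terms vanish in the limit.

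The substantive step is the convergence of the two nonlinear integrals. After integrating the $\partial_x$ in $\Xi_1^\epsilon$ by parts (legitimate since $\feps\in C([0,T],H^3(\RR))$), that contribution becomes, up to the constant $\tfrac{\rho^2-\rho^1}{2\pi}$,
\[
-\int_0^T\!\!\int_\RR \pax\phi(x,t)\,\text{P.V.}\!\int_\RR \arctan\!\left(\frac{\tan(\theta)\,|\tanh(\eta/2)|^\epsilon}{\tanh(\eta/2)}\right)d\eta\,dx\,dt,
\]
with $\theta=(\feps(x)-\feps(x-\eta))/2$, and analogously for $\Xi_2^\epsilon$. I would split the inner integral at $|\eta|=R$. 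On $|\eta|\le R$ the integrand is bounded by $\pi/2$ and depends only on $\feps$, not on its derivative; since $\feps(x,t)\to f(x,t)$ uniformly on compact sets and $|\tanh(\eta/2)|^\epsilon\to1$ pointwise, dominated convergence in $(x,\eta,t)$ — using that $\pax\phi$ has compact $x$-support — gives convergence of this part to the same expression written with $f$ and without the factor $|\tanh(\eta/2)|^\epsilon$. On $|\eta|>R$ the principal value is essential: subtracting $\mathrm{sgn}(\eta)\tan(\theta)$ inside the $\arctan$ leaves a remainder that is $O(e^{-|\eta|})$ uniformly in $\epsilon$, because $\tanh(\eta/2)\to\pm1$ and $|\tanh(\eta/2)|^\epsilon\to1$ exponentially, while the leftover symmetric piece $\text{P.V.}\int_{|\eta|>R}\mathrm{sgn}(\eta)\arctan(\tan\theta)\,d\eta$ is a tail of a convergent integral because $\feps$ (and $f$) decay at infinity; both are small uniformly in $\epsilon$ as $R\to\infty$. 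A standard three-$\epsilon$ argument then closes the limit, and the same scheme handles $\Xi_2^\epsilon$, with $\tanh(\eta/2)/|\tanh(\eta/2)|^\epsilon$ playing the role of $|\tanh(\eta/2)|^\epsilon/\tanh(\eta/2)$. The main obstacle I anticipate is exactly this tail control as $|\eta|\to\infty$ — reconciling the principal-value prescription of \eqref{IVeqderiv} and Definition \ref{IVdefi} with the $\epsilon$-dependent kernels uniformly in $\epsilon$; everything else reduces to dominated convergence against the strong limit of Lemma \ref{IVlemafeps} together with the uniform $W^{1,\infty}$ bounds of Propositions \ref{IVMPf}, \ref{IVMPdf} and \ref{IVUBdf}.
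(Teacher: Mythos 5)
Your overall strategy is the same as the paper's: pass to the limit in the weak formulation of \eqref{IVeqreg}, kill the regularizing terms via their $\sqrt{\epsilon}$, $\epsilon$ prefactors and part 3 of Lemma \ref{IVops} after moving the operators onto $\phi$, split the $\eta$-integral of the nonlinear terms, use the uniform convergence of Lemma \ref{IVlemafeps} (plus the boundedness of $\arctan$) on the bounded region, and control the tail uniformly in $\epsilon$. The bounded-region argument by dominated convergence is fine (and slightly more economical than the paper's three-region splitting with a small inner ball $B(0,\delta)$).

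The genuine gap is in your tail estimate. After replacing the kernel by $\mathrm{sgn}(\eta)$, the leftover piece for $\Xi_1^\epsilon$ is, since $\|\feps\|_{L^\infty}<\pi/2$, essentially
$\text{P.V.}\int_{|\eta|>R}\mathrm{sgn}(\eta)\,\theta\,d\eta=-\tfrac12\lim_{M\to\infty}\int_R^M\bigl(\feps(x-\eta)-\feps(x+\eta)\bigr)d\eta$,
and you dismiss it on the grounds that ``$\feps$ (and $f$) decay at infinity.'' That decay is not available: $f_0$ is only in $W^{1,\infty}(\RR)$, so the limit $f$ need not decay at all, and while $\feps(\cdot,t)\in H^3(\RR)$ does decay for each fixed $\epsilon$, its decay comes from the cutoff $1/(1+\epsilon^2x^2)$ in \eqref{IVinitial} and degenerates as $\epsilon\to0$; hence this leftover is neither convergent for the limit $f$ nor small uniformly in $\epsilon$, which is exactly the uniformity you need to close the three-$\epsilon$ argument. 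The correct mechanism is a cancellation you lose by treating $\Xi_1^\epsilon$ and $\Xi_2^\epsilon$ separately: for large $|\eta|$ the two integrands behave like $\mathrm{sgn}(\eta)\,\theta$ and $\mathrm{sgn}(\eta)\,\bar{\theta}$, whose sum is $\mathrm{sgn}(\eta)\,\feps(x)$ up to $O(e^{-|\eta|})$ errors (with constants depending only on $\|f_0\|_{L^\infty}<\pi/2$ and valid uniformly for $0<\epsilon<1/10$, since $1-|\tanh(\eta/2)|^{\epsilon}=O(\epsilon e^{-|\eta|})$); the symmetric principal value annihilates the $\mathrm{sgn}(\eta)\feps(x)$ part exactly, so the combined tail is $O(e^{-N})\|\pax\phi\|_{L^1}$ uniformly in $\epsilon$, with no spatial decay of $\feps$ or $f$ required. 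This is what the paper invokes as ``the extra decay coming from the principal value at infinity'' when it shows $I_3^\epsilon\to0$ uniformly in $\epsilon$ as $N\to\infty$; the same combined-kernel cancellation is also what makes the principal values in Definition \ref{IVdefi} meaningful for a merely bounded $f$, so your argument needs to keep the two arctan terms together in the tail region.
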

\begin{proof}
First, we deal with the linear terms. Using the weak-* convergence in $L^\infty([0,T],W^{1,\infty}(\RR))$ and Lemma \ref{IVops}, we obtain
$$
\int_0^T\int_\RR \feps(x,t)\pat\phi(x,t)dxdt\rightarrow\int_0^T\int_\RR f(x,t)\pat\phi(x,t)dxdt,
$$
$$
\int_0^T\int_\RR \feps(x,t)\phi(x,t)dxdt\rightarrow\int_0^T\int_\RR f(x,t)\phi(x,t)dxdt,
$$
$$
\int_0^T\int_\RR \feps(x,t)\Lambda_l^{1-\epsilon}\phi(x,t)dxdt\rightarrow\int_0^T\int_\RR f(x,t)\Lambda_l\phi(x,t)dxdt,
$$
$$
\int_0^T\int_\RR \feps(x,t)\Lambda_l^{1-3\epsilon}\phi(x,t)dxdt\rightarrow\int_0^T\int_\RR f(x,t)\Lambda_l\phi(x,t)dxdt,
$$
and
$$
\int_\RR \feps_0(x)\phi(x,0)dx\rightarrow \int_\RR f_0(x)\phi(x,0)dx,
$$
where, in the last step, we use the dominated convergence theorem and the $L^1$ convergence of the mollifier. To deal with the nonlinear terms we split the integrals
$$
\text{P.V.}\int_\RR=\text{P.V.}\int_{B(0,\delta)}+\text{P.V.}\int_{B^c(0,\delta)\cap B(0,N)}+\text{P.V.}\int_{B^c(0,N)},
$$
for sufficiently small $\delta$ and large enough $N$. These parameters, $\delta,N$, that will be fixed below, can depend on $f_0$ but they don't depend on $\epsilon$. For the inner part of the integrals, we get
\begin{multline*}
I^\epsilon_1=\int_0^T\int_\RR\pax \phi(x,t)\left(2\text{P.V.}\int_{B(0,\delta)}\arctan\left(\mu_1(t)\left|\tanh(\eta/2)\right|^\epsilon\right)d\eta\right.\\ 
\left.+2\text{P.V.}\int_{B(0,\delta)}\arctan\left(\frac{\mu_2(t)}{\left|\tanh(\eta/2)\right|^\epsilon}\right)d\eta \right)dxdt\leq c\delta\|\pax \phi\|_{L^1([0,T]\times\RR)}.
\end{multline*}

The outer integral goes to zero as $N$ grows. We compute
\begin{multline*}
I^\epsilon_3=\int_0^T\int_{\RR}\pax \phi(x,t)\left(2\text{P.V.}\int_{B^c(0,N)}\arctan\left(\mu_1(t)\left|\tanh(\eta/2)\right|^\epsilon\right)d\eta\right.\\ 
\left.+2\text{P.V.}\int_{B^c(0,N)}\arctan\left(\frac{\mu_2(t)}{\left|\tanh(\eta/2)\right|^\epsilon}\right)d\eta \right)dxdt
\end{multline*}
As $\eta\in B^c(0,N)$, the integrals are not singular and we only have to deal with the decay at infinity. Using \eqref{IVfact1}, \eqref{IVfact1.b}, \eqref{IVeqreg2}, the bound $\epsilon<1/10$, integrating by parts and using the extra decay coming from the principal value at infinity (see, for instance, the term $A_6$ in Proposition \ref{existence} in Section \ref{IVsecglobal}), we have
$$
I^\epsilon_3\rightarrow0,\text{ uniformly in $\epsilon$ as }N\rightarrow\infty.
$$

The only thing to check is the convergence of $I_2^\epsilon$. Due to the compactness of the support of $\phi$, we have
\begin{multline*}
I^\epsilon_2=\int_0^T\int_\RR\pax \phi(x,t)\left(2\text{P.V.}\int_{B^c(0,\delta)\cap B(0,N)}\arctan\left(\mu_1(t)\left|\tanh(\eta/2)\right|^\epsilon\right)d\eta\right.\\ 
\left.+2\text{P.V.}\int_{B^c(0,\delta)\cap B(0,N)}\arctan\left(\frac{\mu_2(t)}{\left|\tanh(\eta/2)\right|^\epsilon}\right)d\eta \right)dxdt\\
=\int_0^T\int_{B(0,M)}\pax \phi(x,t)\left(2\text{P.V.}\int_{B^c(0,\delta)\cap B(0,N)}\arctan\left(\mu_1(t)\left|\tanh(\eta/2)\right|^\epsilon\right)d\eta\right.\\ 
\left.+2\text{P.V.}\int_{B^c(0,\delta)\cap B(0,N)}\arctan\left(\frac{\mu_2(t)}{\left|\tanh(\eta/2)\right|^\epsilon}\right)d\eta \right)dxdt,
\end{multline*}
with $M$ large enough to ensure $\text{supp}(\phi)\subset B(0,M)$. Since we have (up to a subsequence) that $\feps\rightarrow f$ uniformly on compact sets (see Lemma \ref{IVlemafeps}), the uniform convergence $|\tanh(\eta/2)|^ \epsilon\rightarrow 1$ if $|\eta|>\delta$ and the continuity of all the functions in this integral, the limit in $\epsilon$ and the integral commute and we get
\begin{multline*}
I^\epsilon_2\rightarrow\int_0^T\int_\RR\pax \phi(x,t)\left(2\text{P.V.}\int_{B^c(0,\delta)\cap B(0,N)}\arctan\left(\frac{\tan\left(\frac{f(x)-f(x-\eta)}{2}\right)}{\tanh\left(\frac{\eta}{2}\right)}\right)d\eta\right.\\ 
\left.+2\text{P.V.}\int_{B^c(0,\delta)\cap B(0,N)}\arctan\left(\tan\left(\frac{f(x)+f(x-\eta)}{2}\right)\tanh\left(\frac{\eta}{2}\right)\right)d\eta \right)dxdt=I_2^0.
\end{multline*}

We conclude the proof of the Theorem \ref{IVglobal} by taking $\delta<<1$ and $N>>1$ to control the tails and then we send $\epsilon\rightarrow0$. 
\end{proof}

\bibliographystyle{abbrv}

\end{document}